\documentclass[12pt,a4paper,reqno]{amsart}
\usepackage[utf8]{inputenc}
\usepackage[T1]{fontenc}
\usepackage{amsmath}
\usepackage{amsthm}
\usepackage{amssymb}
\usepackage[abbrev]{amsrefs}
\usepackage{mathrsfs}
\usepackage[dvipsnames]{xcolor}
\usepackage{bm}
\usepackage{bbm}
\usepackage{enumitem}
\usepackage{hyperref}
\AtBeginDocument{\def\MR#1{}}
\makeatletter
\@namedef{subjclassname@2020}{%
\textup{2020} Mathematics Subject Classification}
\makeatother
\newtheorem{thmintro}{}

\newtheorem{theoremintro}[thmintro]{Theorem}
\theoremstyle{definition}
\newtheorem{questionintro}[thmintro]{Question}
\newtheorem{thm}{}[section]
\newtheorem{theorem}[thm]{Theorem}
\newtheorem{corollary}[thm]{Corollary}
\newtheorem{lemma}[thm]{Lemma}
\newtheorem{proposition}[thm]{Proposition}
\newtheorem{question}[thm]{Question}
\theoremstyle{definition}
\newtheorem{definition}[thm]{Definition}
\newtheorem{example}[thm]{Example}
\numberwithin{equation}{section}
\allowdisplaybreaks
\newcommand{\Nnorm}[1]{{\left\vert\kern-0.25ex\left\vert\kern-0.25ex\left\vert #1
\right\vert\kern-0.25ex\right\vert\kern-0.25ex\right\vert}}
\newcommand{\abs}[1]{\left\lvert#1\right\rvert}
\newcommand{\norm}[1]{\left\lVert#1\right\rVert}
\newcommand{\enbrace}[1]{\left\lbrace#1\right\rbrace}
\newcommand{\enpar}[1]{\left(#1\right)}
\newcommand{\Gt}{\ensuremath{\mathcal{G}}}
\newcommand{\EB}{\ensuremath{\mathcal{E}}}
\newcommand{\Bt}{\ensuremath{\mathcal{B}}}
\newcommand{\It}{\ensuremath{\mathcal{I}}}
\newcommand{\XB}{\ensuremath{\mathcal{X}}}
\newcommand{\YB}{\ensuremath{\mathcal{Y}}}

\newcommand{\Ind}{\ensuremath{\mathbbm{1}}}
\newcommand{\Sym}{\ensuremath{\mathbb{S}}}
\newcommand{\SymG}{\ensuremath{\mathrm{Sym}}}
\newcommand{\NN}{\ensuremath{\mathbb{N}}}
\newcommand{\EE}{\ensuremath{\mathbb{E}}}
\newcommand{\DD}{\ensuremath{\mathbb{D}}}
\newcommand{\FF}{\ensuremath{\mathbb{F}}}
\newcommand{\RR}{\ensuremath{\mathbb{R}}}
\newcommand{\XX}{\ensuremath{\mathbb{X}}}
\newcommand{\YY}{\ensuremath{\mathbb{Y}}}
\newcommand{\ZZ}{\ensuremath{\mathbb{Z}}}
\newcommand{\BB}{\ensuremath{\mathbb{B}}}
\newcommand{\ee}{\ensuremath{\bm{e}}}
\newcommand{\xx}{\ensuremath{\bm{x}}}
\newcommand{\yy}{\ensuremath{\bm{y}}}
\newcommand{\Nr}{\ensuremath{\bm{N}}}
\newcommand{\tb}{\ensuremath{\bm{\tau}}}
\newcommand{\dist}{\ensuremath{\bm{\delta}}}
\newcommand{\prim}{\ensuremath{\bm{\sigma}}}
\newcommand{\FuFu}{\ensuremath{\bm{\Lambda}}}
\newcommand{\usdf}{\ensuremath{\bm{\varphi}}}
\newcommand{\ww}{\ensuremath{\bm{w}}}

\newcommand{\uu}{\ensuremath{\bm{u}}}
\newcommand{\unc}{\ensuremath{\bm{k}}}
\newcommand{\uunc}{\ensuremath{\widetilde{\bm{k}}}}
\DeclareMathOperator{\spn}{span}
\DeclareMathOperator{\supp}{supp}

\hyphenation{Ba-nach}
\hyphenation{Ma-drid}
\hyphenation{Schau-der}
\hyphenation{Mar-cin-kie-wicz}
\hyphenation{Es-ta-dis-ti-ca}
\hyphenation{non-sum-mable}
\subjclass[2020]{41A65, 41A46, 41A17, 46B15, 46B45}
\keywords{Non-linear approximation, greedy bases, unconditional bases, Property~(A)}
\begin{document}
\title[Conditional bases with Property~(A)]{Conditional bases with Property~(A)}
\author[Albiac]{Fernando Albiac}
\address{Department of Mathematics, Statistics, and Computer Sciencies--InaMat2\\
Universidad P\'ublica de Navarra\\
Campus de Arrosad\'{i}a\\
Pamplona\\
31006 Spain}
\email{fernando.albiac@unavarra.es}
\author[Ansorena]{Jos\'e L. Ansorena}
\address{Department of Mathematics and Computer Sciences\\
Universidad de La Rioja\\
Logro\~no\\
26004 Spain}
\email{joseluis.ansorena@unirioja.es}
\author[Berasategui]{Miguel Berasategui}
\address{Miguel Berasategui\\
UBA - Pab I, Facultad de Ciencias Exactas y Naturales\\
Universidad de Buenos Aires\\
(1428), Buenos Aires, Argentina}
\email{mberasategui@dm.uba.ar}
\author[Bern\'a]{Pablo M. Bern\'a}
\address{Pablo M. Bern\'a\\
Departamento de Métodos Cuantitativos, CUNEF Universidad\\
Madrid, 28040 Spain}
\email{pablo.berna@cunef.edu}
\begin{abstract}
Property~(A) is a week symmetry condition that plays a fundamental role in the characterization of greedy-type bases in the \emph{isometric} case, i.e., when the constants involved in the study of the efficiency of the thresholding greedy algorithm in Banach spaces are sharp. In this note we build examples of Banach spaces with Schauder bases that have Property~(A) but fail to be unconditional, thus settling a long standing problem in the area. As a by-product of our work we hone our construction to produce counterexamples that solve other open questions in the isometric theory of greedy bases.
\end{abstract}
\thanks{F.\ Albiac and J.\ L.\ Ansorena acknowledge the support of the Spanish Ministry for Science and Innovation under Grant PID2022-138342NB-I00 for \emph{Functional Analysis Techniques in Approximation Theory and Applications (TAFPAA)}}

\thanks{M. Berasategui is supported by the Grants CONICET PIP
11220200101609CO y ANPCyT PICT 2018-04104 (Consejo Nacional de Investigaciones Científicas y Técnicas y Agencia Nacional de Promoción de la Investigación, el Desarrollo Tecnológico y la Innovación, Argentina)}

\thanks{Pablo M. Berná is supported by the Grant PID2022-142202NB-I00 (Agencia Estatal de Investigación, Spain)}
\maketitle
\section{Introduction and background}\noindent
Arguably, the isometric theory of greedy bases was initiated twenty years ago in \cite{AW2006} with the characterization of those bases which provide optimal performance of the thresholding greedy algorithm in a given Banach space where nonlinear approximation using bases is taking place.

Let $\XB=(\xx_n)_{n=1}^{\infty}$ be a (Schauder) basis in a Banach space $\XX$ and for $x\in \XX$ and $m\in \NN$ let $\sigma_{m}(x)$ be the (theoretical) error of the best nonlinear $m$-term approximation to $x$, that is, $\sigma_{m}(x)=\inf\norm{ x-y}$, where the infimum is taken over all linear combinations $y=\sum_{n} c_{n}\,\xx_{n}$ with at most $m$ nonzero coefficients $c_{n}$. By definition, for a given $x\in \XX$ and $m\in \NN$, an $m$-term greedy approximation of $x$, $\Gt_{m}(x)$, is obtained by retaining $m$ terms with the largest $\abs{a_{n}}$'s from the series expansion $x=\sum_{n=1}^{\infty}a_{n}\, \xx_{n}$ and replacing the remaining $a_{n}$'s by zeros. The sequence of non-linear operators $(\Gt_m)_{m=1}^\infty$ is called the thresholding greedy algorithm (TGA for short) relative to $\XB$. The basis $\XB$ is called greedy with constant $C$ or $C$-greedy ($C\ge 1$) if
\[
\norm{ x-\Gt_m (x)}\Vert\le C\sigma_{m}(x)
\]
for all $x\in \XX$ and all $m\in \NN$. These bases were formally introduced by Konyagin and Temlyakov in their pioneering work \cite{KoTe1999}, where they were intrinsically characterized as unconditional basis with the additional property of being democratic. We are interested in the quantitative version of this result, which reads as follows.

\begin{theoremintro}[\cite{KoTe1999}*{Theorem 1}]
Let $\XB$ be a basis of a Banach space $\XX$. If $\XB$ is $C$-greedy then $\XB$ is unconditional with suppression unconditionality constant $C$ and democratic with democratic constant $C$. Conversely, if $\XB$ is unconditional with suppression unconditionality constant $K_{s}$, lattice unconditionality constant $K_u$, and democratic with constant $\Delta$, then $\XB$ is greedy with greedy constant $K_s+K_s K_u^2 \Delta$.
\end{theoremintro}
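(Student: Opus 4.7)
The plan is a perturb-and-select strategy on carefully designed test vectors. For suppression unconditionality with constant $C$, fix a finitely supported $x=\sum_{n\in F}a_n\xx_n$ and $B\subset F$, set $m=|B|$, and form
\[
y=x+M\sum_{n\in B}\varepsilon_n\xx_n,
\]
with $\varepsilon_n$ unimodular chosen so that $\varepsilon_na_n\ge 0$ and $M$ large enough that the $m$ largest coefficients of $y$ are precisely those indexed by $B$. Then $y-\Gt_m(y)=\sum_{n\in F\setminus B}a_n\xx_n$, while $M\sum_{n\in B}\varepsilon_n\xx_n$ is an admissible $m$-term approximation giving $\sigma_m(y)\le\|x\|$, so the greedy inequality delivers $\|\sum_{n\in F\setminus B}a_n\xx_n\|\le C\|x\|$; density handles general $x$. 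For democracy with constant $C$, given disjoint $A$, $B$ of equal cardinality $m$, consider $z=(1+t)\Ind_B+\Ind_A$: the greedy rule gives $\Gt_m(z)=(1+t)\Ind_B$, while $\Ind_A$ is an admissible $m$-term trial, whence $\|\Ind_A\|\le C(1+t)\|\Ind_B\|$; letting $t\to 0$ and passing to the general case $|A|\le|B|$ via an auxiliary disjoint index set completes this half.

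\textbf{Converse direction.} Fix $x=\sum_n a_n\xx_n$, $m\in\NN$, let $A=\supp\Gt_m(x)$, and let $y=\sum_{n\in B}c_n\xx_n$ be an arbitrary competitor with $|B|\le m$. I would split
\[
x-\Gt_m(x)=\sum_{n\notin A\cup B}a_n\xx_n+\sum_{n\in B\setminus A}a_n\xx_n.
\]
The first summand is the coordinate projection of $x-y$ off $A\cup B$ (since $y$ vanishes outside $B$), hence bounded by $K_s\|x-y\|$. For the second, put $\alpha=\max_{n\in B\setminus A}|a_n|$ and $\beta=\min_{n\in A}|a_n|$ (so $\alpha\le\beta$ by the greedy rule), and chain lattice unconditionality, democracy (applicable because $|B\setminus A|\le|A\setminus B|$, a consequence of $|B|\le m=|A|$), lattice unconditionality again, and suppression unconditionality on $\sum_{n\in A\setminus B}a_n\xx_n=P_{A\setminus B}(x-y)$:
\[
\Bigl\|\sum_{n\in B\setminus A}a_n\xx_n\Bigr\|\le K_u\alpha\|\Ind_{B\setminus A}\|\le K_u\alpha\Delta\|\Ind_{A\setminus B}\|\le\tfrac{K_u^2\alpha\Delta}{\beta}\Bigl\|\sum_{n\in A\setminus B}a_n\xx_n\Bigr\|\le K_sK_u^2\Delta\|x-y\|,
\]
the last step absorbing $\alpha/\beta\le 1$. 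Summing the two estimates and taking the infimum over $y$ produces the announced constant $K_s+K_sK_u^2\Delta$.

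\textbf{Main obstacle.} The delicate piece is the four-step chain in the converse, which shuttles between the ``small'' coefficients on $B\setminus A$ and the ``large'' coefficients on $A\setminus B$ by using lattice unconditionality in both directions, hinged on democracy and on the cardinality inequality $|B\setminus A|\le|A\setminus B|$. Keeping the direction of each inequality straight—and recognizing that the large-coefficient sum is itself a coordinate projection of $x-y$, so that suppression unconditionality closes the chain—is where the proof can most easily go astray. All other ingredients (density extension for the forward direction, reduction of the general democracy statement to the disjoint equal-cardinality case) are routine.
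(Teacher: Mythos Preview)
The paper does not prove Theorem~A; it is quoted from \cite{KoTe1999} as background for the isometric theory developed in later sections, so there is no in-paper argument to compare against. Your outline is precisely the standard Konyagin--Temlyakov proof---the perturb-and-select trick in the forward half and the four-link chain $K_u\cdot\Delta\cdot K_u\cdot K_s$ in the converse---and is correct.
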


Since various approximation algorithms converge trivially when some appropriate constant is one, the isometric case is of special interest. For instance, it is easy to see that any $1$-symmetric basis of a Banach space, in particular any orthonormal basis of a separable Hilbert space, is $1$-greedy. Conversely, any $1$-greedy basis $(\xx_{n})_{n=1}^{\infty}$ of a Hilbert space is orthogonal and $\norm{\xx_n}= \norm{\xx_m}\Vert$ for all $n$, $m\in \NN$. In the context of Banach spaces, Theorem~A immediately yields that a $1$-greedy basis has both suppression unconditionality constant and democratic constant equal to $1$. However, this is not a characterization of bases with greedy constant $1$. Answering a question raised by Wojtaszczyk (see \cite{Wojt2003}*{Problem 1}), the authors found a satisfactory description of $1$-greedy bases in the general case.

\begin{theoremintro}[\cite{AW2006}*{Theorem 3.4}]
A basis $\XB =(\xx_{n})_{n=1}^{\infty}$ of a Banach space $\XX$ is $1$-greedy if and only if it is $1$-suppression unconditional and satisfies Property~(A).
\end{theoremintro}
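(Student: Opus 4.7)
The proof splits into the two implications, the reverse one being the more delicate.

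\emph{Forward direction.} Suppose $\XB$ is $1$-greedy. Then $1$-suppression unconditionality is the quantitative content of Theorem~A with $C=1$. For Property~(A), the idea is a double-swap argument. Fix disjoint finite sets $A,B\subset\NN$ with $|A|=|B|=m$, signs $(\varepsilon_n)_{n\in A}$ and $(\eta_n)_{n\in B}$, and a finitely supported $y\in\XX$ with $\supp(y)\cap(A\cup B)=\emptyset$ and $\norm{y}_\infty<1$. Consider the auxiliary vector $z:=y+\sum_{n\in A}\varepsilon_n\xx_n+\sum_{n\in B}\eta_n\xx_n$. Every coefficient of $z$ on $A\cup B$ has modulus $1$, strictly larger than any coefficient coming from $y$, so both $A$ and $B$ are admissible choices of greedy set of size $m$ for $z$. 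With greedy set $A$ the $1$-greedy inequality gives
\[
\norm{y+\textstyle\sum_{n\in B}\eta_n\xx_n}=\norm{z-\Gt_m(z)}\le\sigma_m(z)\le\norm{z-\textstyle\sum_{n\in B}\eta_n\xx_n}=\norm{y+\textstyle\sum_{n\in A}\varepsilon_n\xx_n},
\]
and exchanging the roles of $A$ and $B$ yields the reverse inequality, hence equality. A rescaling argument applied to $(1-\epsilon)y$ with $\epsilon\downarrow 0$ removes the strict restriction on $\norm{y}_\infty$, giving Property~(A) with constant $1$.

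\emph{Reverse direction.} Assume $\XB$ is $1$-suppression unconditional and has Property~(A). Fix $x=\sum_n a_n\xx_n\in\XX$, $m\in\NN$, a greedy set $A$ of size $m$ so that $\alpha:=\min_{n\in A}\abs{a_n}\ge\max_{n\notin A}\abs{a_n}$, and an arbitrary $m$-term candidate $y=\sum_{n\in B}b_n\xx_n$. After the standard reduction to $|B|=m$ and $A\cap B=\emptyset$, write $x=u+v_A+v_B$ with $u=\sum_{n\notin A\cup B}a_n\xx_n$, $v_A=\Gt_m(x)$ and $v_B=\sum_{n\in B}a_n\xx_n$, so the inequality to prove becomes $\norm{u+v_B}\le\norm{u+v_A+v_B-y}$. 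The strategy is to chain together applications of $1$-suppression unconditionality (which monotonises norms with respect to support) with applications of Property~(A) (which transfers a height-$\alpha$ sign block between $A$ and $B$, once one rescales by $\alpha$ so that the non-swapped part has $\sup$-norm at most~$1$, a condition guaranteed by the greedy bound $\max_{n\notin A}\abs{a_n}\le\alpha$). Concretely, I would first use $1$-suppression to dominate $\norm{u+v_B}$ by the norm of a vector obtained by inserting a height-$\alpha$ sign block on $A$, then use Property~(A) to swap this block across to $B$, producing an \emph{equality}, and finally use $1$-suppression once more to dominate by $\norm{u+v_A+v_B-y}$.

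The main obstacle will be executing the closing step of the chain cleanly: after the Property~(A) swap the $B$-slot of the intermediate vector carries a sign pattern of height $\alpha$, whereas the $B$-coefficients of the target $x-y$ are $a_n-b_n$, whose moduli need not exceed $\alpha$ and whose signs are unrelated to $\mathrm{sgn}(a_n)$. The resolution is to perform the swap coordinate-by-coordinate, choosing the signs on $A$ before each individual swap so that, after transfer, the sign block on $B$ lines up with $\mathrm{sgn}(a_n-b_n)$, and to absorb each height mismatch by a further use of $1$-suppression on the appropriate singleton sub-block. Once the coordinate-wise bookkeeping is in place, assembling the chain yields $\norm{x-\Gt_m(x)}\le\norm{x-y}$; taking the infimum over $y$ gives $\norm{x-\Gt_m(x)}\le\sigma_m(x)$, that is, $1$-greediness.
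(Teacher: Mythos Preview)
The paper does not supply its own proof of this statement---it is quoted from \cite{AW2006}---so there is nothing in the paper to compare against; I assess your argument on its merits.

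The forward direction is fine.

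The reverse direction has a genuine gap. First, the reduction to $A\cap B=\emptyset$ is unjustified: the standard trick of padding $B$ up to size $m$ with indices from $A\setminus B$ (setting $b_n=a_n$ there) \emph{enlarges} $A\cap B$ rather than emptying it. More importantly, the central Property~(A) step does not go through as written. Property~(A) lets you relocate the block of maximum-modulus coefficients only to positions lying \emph{outside the support of the non-maximal part}. In your intermediate vector $u+v_B+\alpha\Ind_{\varepsilon,A}$ the non-maximal part is $u+v_B$, whose support contains every $j\in B$ with $a_j\neq 0$; the $\alpha$-block therefore cannot be transported onto $B$. Your coordinate-by-coordinate variant inherits the same obstruction, and the proposal to ``absorb each height mismatch by $1$-suppression on a singleton'' is not legitimate: $1$-suppression unconditionality lets you set a coordinate to zero, not replace one nonzero value by another of different modulus. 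After any swap the $j$-slot carries $\pm\alpha$, and neither tool converts that into $a_j-b_j$.

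A route that avoids these obstacles is to reverse the order of operations. Apply $1$-suppression to $x-y$ to zero out its $B$-coordinates; since $S_{\NN\setminus B}(x-y)=x-S_B(x)$ this gives $\norm{x-S_B(x)}\le\norm{x-y}$ immediately. One is then reduced to the inequality $\norm{x-S_A(x)}\le\norm{x-S_B(x)}$, which is precisely $1$-almost greediness and is equivalent to Property~(A) (cf.\ Theorem~D). The point is that in this residual problem the $B\setminus A$ positions are \emph{empty}, so the Property~(A) relocations (moving a maximal coefficient from $A\setminus B$ to a vacant slot in $B\setminus A$) are now legal, and the remaining adjustments are honest deletions. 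The inductive bookkeeping is still delicate---one has to process the $A\setminus B$ coordinates in decreasing order of modulus so that the coefficient being moved is always the current maximum---but the structural obstacle in your sketch disappears.
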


Loosely speaking, Property~(A) allows the largest coefficients of a vector to be relocated and rotated without altering its norm. In this regard, Property~(A) can be seen as a weak form of symmetry which implies not only $1$-democracy but also $1$-superdemocracy. To define it properly, we say that $y=\sum_{n}b_{n}\, \xx_{n}\in \XX$ is a \emph{greedy rearrangement} of $x=\sum_{n}\, a_{n} \, \xx_{n}\in\XX$ (with respect to the basis $\XB=(\xx_{n})_{n=1}^{\infty}$ of $\XX$) if there exists a bijection $\pi\colon \NN \to \NN$ such that $|b_{\pi(n)}|=|a_{n}|$ for all $n\in\NN$, and $\pi(k) = k$ and $a_{k} = b_{k}$ whenever $\abs{a_k}<\sup_n \abs{a_n}$. The basis $\XB$ satisfies Property~(A) if $\norm{x} =\norm{y}$ whenever $y$ is a greedy rearrangement of $x$.

It is worth it emphasizing that, despite the fact that they are closely related properties, $1$-superdemocracy cannot supplant Property~(A) in determining the isometric properties of greedy-like bases, even when combined with other stronger features of the basis. Indeed, the canonical basis of Garling sequence space is $1$-subsymmetric (hence $1$-lattice unconditional and $1$-superdemocratic) but fails to be $1$-greedy because it does not have Property~(A)! (see \cite{AW2006}*{Example 5.4}).

The neat description of $1$-greedy bases provided by Theorem~B motivated further work in the isometric theory of greedy bases. The next movers in this direction were Dilworth et al., who in \ \cite{DOSZ2011} solved most of the questions that were raised in \cite{AW2006}. For instance, they found examples of $1$-subsymmetric bases that fail to be symmetric while being $1$-greedy, and examples of bases that are $1$-greedy but not subsymmetric. Later on it was shown in \cite{AAW2018b} that Garling sequence spaces can be renormed so that its canonical basis is $1$-greedy.

Theorem~B also inspired the analysis of other new breeds of bases that arise naturally in the study of the TGA. This study led to the following characterizations of the other two important types of greedy-like bases, namely quasi-greedy, and almost greedy bases in the isometric case, precisely in terms of the same two ingredients that we find in the characterization of $1$-greedy bases but in disjoint occurrences.

\begin{theoremintro}[\cite{AlbiacAnsorena2016c}*{Theorem 2.1}]
A basis of a Banach space is $1$-quasi-greedy if and only if it is 1-suppression unconditional.
\end{theoremintro}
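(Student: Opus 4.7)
The forward implication is immediate from the definitions: for any $x=\sum_n a_n \xx_n$ the greedy approximation $\Gt_m(x)=\sum_{n\in \Lambda} a_n \xx_n$ is a coordinate suppression of $x$ onto some finite set $\Lambda$, so $1$-suppression unconditionality gives $\norm{\Gt_m(x)}\le\norm{x}$. The substance of the theorem is the converse: assume $\norm{\Gt_m(x)}\le \norm{x}$ for every $x\in\XX$, every admissible greedy set, and every $m$, and derive $\norm{P_B(x)}\le\norm{x}$ for every finite set $B\subseteq\NN$, where $P_B$ denotes the coordinate projection.

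First I would reduce to finitely supported vectors. Given $x\in\XX$ and a finite set $B$, the Schauder partial sums $S_k(x)=\sum_{n\le k}a_n\xx_n$ converge to $x$ in norm, and for $k$ large enough $P_B(x)=P_B(S_k(x))$; once the finitely supported case is settled, passing to the limit $k\to\infty$ does the job. So it suffices to treat $y=\sum_{n\in A}a_n\xx_n$ with $A$ finite and $B\subseteq A$. Discarding indices with $a_n=0$ we may also assume $a_n\neq 0$ for $n\in B$, and the statement is trivial when $C:=A\setminus B$ is empty, so we assume $C\neq\emptyset$.

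The key move is to scale only the coefficients inside $B$ in order to \emph{force} them to be the $|B|$ largest, so that the greedy algorithm of length $|B|$ picks exactly the indices of $B$. Concretely, for $t>\max_{n\in C}\abs{a_n}/\min_{n\in B}\abs{a_n}$, set
\[
y_t\;=\;t\sum_{n\in B}a_n\xx_n+\sum_{n\in C}a_n\xx_n\;=\;y+(t-1)P_B(y).
\]
For such $t$ the top $|B|$ coefficients of $y_t$ are unambiguously located in $B$, so $\Gt_{|B|}(y_t)=t\,P_B(y)$. Applying $1$-quasi-greediness to $y_t$ yields
\[
t\,\norm{P_B(y)}\;=\;\norm{\Gt_{|B|}(y_t)}\;\le\;\norm{y_t}\;\le\;\norm{y}+(t-1)\norm{P_B(y)},
\]
where the last bound is the triangle inequality applied to the decomposition $y_t=y+(t-1)P_B(y)$. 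Subtracting $(t-1)\norm{P_B(y)}$ from both sides gives $\norm{P_B(y)}\le\norm{y}$, as required.

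The only delicate point is choosing the perturbation so that quasi-greediness and the triangle inequality work in opposite directions and the $t$-dependence cancels; that is what the multiplicative rescaling (rather than additive) on $B$ accomplishes. Once the auxiliary vector $y_t$ is identified, the remainder of the argument reduces to bookkeeping: checking that $B$ indeed contains the top $|B|$ coefficients of $y_t$ for large $t$, so that any admissible greedy set of size $|B|$ coincides with $B$ and the hypothesis applies cleanly with constant $1$.
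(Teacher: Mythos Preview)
The paper does not supply its own proof of this statement; Theorem~C is quoted verbatim as background from \cite{AlbiacAnsorena2016c}, so there is nothing in the present paper to compare against. Your argument is correct and is the standard one: dilate the coefficients over $B$ so that $B$ becomes the unique greedy set of size $\abs{B}$, apply the $1$-quasi-greedy hypothesis to $y_t$, and then use the triangle inequality on the decomposition $y_t=y+(t-1)P_B(y)$ so that the $t$-terms cancel and leave $\norm{P_B(y)}\le\norm{y}$.
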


\begin{theoremintro}[\cite{AlbiacAnsorena2017b}*{Theorem 2.3}]
A basis of a Banach space is $1$-almost greedy if and only if it satisfies Property~(A).
\end{theoremintro}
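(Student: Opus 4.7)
I will tackle the two directions of the equivalence separately.

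For the forward implication ($1$-almost greedy $\Rightarrow$ Property~(A)), take $x=\sum_n a_n\xx_n$ and a greedy rearrangement $y$, and set $M=\sup_n|a_n|$ and $E=\enbrace{n:|a_n|=M}$ (the case $E=\emptyset$ forces $y=x$). By the definition of greedy rearrangement, $y$ and $x$ agree off $E$ while the coefficients of $y$ on $E$ all have modulus $M$. Choose a fresh set $F\subset\NN$ disjoint from $\supp(x)$ with $|F|=|E|$, fix unit scalars $(\gamma_n)_{n\in F}$, and set
\[
z_x=x+\sum_{n\in F}M\gamma_n\xx_n,\qquad z_y=y+\sum_{n\in F}M\gamma_n\xx_n.
\]
Both vectors have $E\cup F$ as their maximum-modulus level set, so $E$ and $F$ each qualify as greedy sets of size $|E|$. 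Applying the $1$-almost greedy inequality to $z_x$ twice -- once with greedy set $E$ against comparison set $F$, once with the roles swapped -- yields
\[
\norm{\sum_{n\notin E}a_n\xx_n+\sum_{n\in F}M\gamma_n\xx_n}=\norm{x},
\]
and the identical computation for $z_y$ produces the same left-hand expression equal to $\norm{y}$. Hence $\norm{x}=\norm{y}$.

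For the converse, assume Property~(A) and aim for the $1$-almost greedy inequality
\[
\norm{x-\sum_{n\in\Lambda}a_n\xx_n}\le\norm{x-\sum_{n\in B}a_n\xx_n}
\]
with $\Lambda$ a greedy set of $x$ of size $m$ and $B\subset\NN$ of size at most $m$; subtracting $\sum_{n\in\Lambda\cap B}a_n\xx_n$ reduces to the case $\Lambda\cap B=\emptyset$. The strategy is to derive from Property~(A) a chain of progressively stronger symmetries -- first $1$-superdemocracy (immediate, as two vectors with unit-modulus coefficients on the same support differ only by phases at the maximum level), then a flat suppression unconditionality on subsets of the maximum level (by phase-rotation averaging using convexity of the norm), and finally a transport-type invariance relocating the max block to a fresh disjoint support with arbitrary unit phases. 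Combining these with a norming-functional comparison -- picking $\varphi\in\XX^*$ achieving the right-hand norm and using Property~(A) to rotate the coefficients on $\Lambda$ into phase-alignment with $\varphi$ -- yields the desired projection inequality.

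The main obstacle is the converse direction. Property~(A) as stated is a phase-only invariance at the strict maximum modulus, whereas $1$-almost greedy is a genuine size comparison between projection residuals on different supports. The delicate points are: the bootstrap from max-level phase invariance to top-block mobility; handling the case in which $\Lambda$ is not already contained in the strict maximum level of $x$ (a reduction to the flat case via perturbation or truncation of the coefficient moduli); and controlling tie-breaking once auxiliary max-modulus blocks are adjoined and the maximum level set enlarges, ensuring that each subsequent application of Property~(A) remains legitimate.
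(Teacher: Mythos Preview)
The paper does not contain a proof of Theorem~D; it is quoted as background from \cite{AlbiacAnsorena2017b}, so there is nothing in the present paper to compare your attempt against. I will therefore assess your proposal on its own.

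Your forward implication is correct. Adjoining a fresh max-modulus block on a set $F$ disjoint from $\supp(x)$, and then exploiting that both $E$ and $F$ are admissible greedy sets of size $\abs{E}$ for $z_x$ and $z_y$, yields $\norm{x}=\norm{z_x-S_E(z_x)}=\norm{z_y-S_E(z_y)}=\norm{y}$ exactly as you describe. The only caveat is that you need $\NN\setminus\supp(x)$ to contain at least $\abs{E}$ points; this holds automatically for finitely supported $x$, and the general case follows by a routine density argument which you should at least mention.

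Your converse, however, is not a proof but an outline with self-declared gaps. You correctly identify the key intermediate step---what you call ``transport-type invariance'', namely that $\norm{f+\Ind_{\varepsilon,A}}$ depends only on $\abs{A}$ once $\norm{f}_\infty\le 1$ and $A\cap\supp(f)=\emptyset$---but you do not prove it, and this lemma is the substance of the implication. More seriously, the final step you propose (choose a norming functional $\varphi$ for $x-S_B(x)$ and use Property~(A) to phase-align the $\Lambda$-block with $\varphi$) does not work as stated: Property~(A) controls norms, not pairings, and after rotating phases the functional $\varphi$ need no longer be norming, so no inequality between $\norm{x-S_\Lambda(x)}$ and $\norm{x-S_B(x)}$ follows. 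The argument in \cite{AlbiacAnsorena2017b} instead passes directly from the transport invariance to the $1$-almost greedy estimate by a set-swap and convexity argument, without invoking duality. So your sketch names the right target lemma but neither proves it nor supplies a sound mechanism for the last step.
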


Since almost greedy basis are in particular quasi-greedy, one could expect that when the almost greedy constant is sharp, the implication would still hold, i.e., that being $1$-almost greedy implies being $1$-quasi-greedy. In light of Theorems~C and D, we arrive naturally at the following question, which enquires about the overlapping of the two properties that characterize $1$-greedy bases.

\begin{questionintro}
Does Property~(A) imply unconditionality with suppression unconditionality constant $K_s = 1$?
\end{questionintro}

If we restrict our attention to unconditional bases, the study of the relation between Property~(A) and unconditionality reduces to the problem of determining whether Property~(A) implies some upper bound for the unconditionality constant of the basis.

\begin{questionintro}
Does an unconditional basis with Property~(A) always have $K_s = 1$?
\end{questionintro}

Theorem~C is somewhat surprising since it connects nonlinear properties in approximation theory in Banach spaces (quasi-greediness) with linear properties (such as unconditionality). Besides, it exhibits how an isometric (hence, \emph{quantitative})
property could lead to an improvement of the isomorphic (i.e., \emph{qualitative}) behaviour of a basis. It is therefore natural to ask whether this is also the case with almost greedy bases.

\begin{questionintro} Is there a conditional basis with Property~(A)?
\end{questionintro}

Both Questions E and F were solved in the negative in the recent article \cite{AABHO2024}.

\begin{theoremintro}[\cite{AABHO2024}*{Propositions 2.4 and 3.4}]
For each $K<3/2$ there is a distortion of $\ell_1$ so that the standard unit vector basis still satisfies Property~(A) and is unconditional with suppression unconditional constant $K_{s}\ge K$.
\end{theoremintro}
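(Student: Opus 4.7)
The plan is to construct, for each prescribed $K<3/2$, an equivalent norm $\Nnorm{\cdot}_K$ on $\ell_1$ under which the canonical unit vector basis $(e_n)$ retains Property~(A) while its suppression unconditionality constant is at least $K$. I would introduce a convex, positively homogeneous functional $\Psi_K : \ell_1 \to [0,\infty)$, invariant under greedy rearrangements, and set $\Nnorm{x}_K := \norm{x}_1 + \Psi_K(x)$. The heuristic target is that when the strictly submaximal entries of $x$---those $a_n$ with $\abs{a_n} < M(x) := \max_k \abs{a_k}$---are sign-imbalanced, $\Psi_K(x)$ should be of order $\alpha_K \cdot \abs{n^+(x) - n^-(x)}$, where $n^\pm(x)$ count the strictly submaximal positive and negative entries and $\alpha_K$ is a tuning parameter to be driven close to $2$ as $K \to 3/2$. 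A bound of the form $\Psi_K \le \alpha_K \norm{\cdot}_1$ will then make $\Nnorm{\cdot}_K$ equivalent to $\norm{\cdot}_1$. Since the obvious pointwise formula of the form $\alpha_K M(x) \abs{n^+(x) - n^-(x)}$ is invariant but neither convex nor uniformly comparable to $\norm{\cdot}_1$, $\Psi_K$ has to be defined instead as a supremum of linear test functionals indexed over a carefully chosen family: convexity is then automatic, and the burden is to select the family so that the action of greedy rearrangements on coordinates lifts to a bijection of the index set.

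Property~(A) then follows almost formally: given a greedy rearrangement $y$ of $x$, the $\ell_1$-norm is trivially preserved, and $y$ coincides with $x$ off the greedy set $S(x)$ while being only permuted and possibly re-signed inside $S(x)$. By the design of the indexing family, each linear test functional contributing to $\Psi_K(x)$ is transported bijectively to one contributing the same value to $\Psi_K(y)$, yielding $\Psi_K(y) = \Psi_K(x)$ and hence $\Nnorm{y}_K = \Nnorm{x}_K$.

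To witness $K_s \ge K$ I would test the norm on the vectors $x_N := M e_0 + \sum_{j=1}^{N} e_j - \sum_{j=N+1}^{2N} e_j$ with $M>1$, together with the coordinate projection $P_N$ onto $\{0,1,\dots,N\}$. The submaximal part of $x_N$ is balanced ($n^+ = n^- = N$), so $\Psi_K(x_N) \approx 0$ and $\Nnorm{x_N}_K \approx \norm{x_N}_1 = M + 2N$; for $P_N x_N$ the submaximal part is maximally imbalanced, and $\Psi_K(P_N x_N)$ is of order $\alpha_K N$. Consequently, as $N \to \infty$,
\[
\frac{\Nnorm{P_N x_N}_K}{\Nnorm{x_N}_K} \;\longrightarrow\; \frac{1 + \alpha_K}{2},
\]
and tuning $\alpha_K$ close to $2$ drives this ratio above any preassigned $K<3/2$.

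The crux of the difficulty is reconciling the \emph{convexity} of $\Psi_K$ with its \emph{invariance} under the intrinsically nonlinear greedy rearrangement map, since the natural closed-form pointwise expressions satisfy only one of the two requirements. Identifying the right parametrising family of linear functionals---so that both conditions hold simultaneously, and so that $\alpha_K$ can be pushed close enough to $2$ to yield the required lower bound on $K_s$---is the nontrivial core of the construction.
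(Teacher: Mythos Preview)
The statement you are attempting to prove is Theorem~H, which in the present paper is merely \emph{cited} from \cite{AABHO2024} and not proved here; there is no proof in this paper to compare against. The paper does, however, establish a strictly stronger result in Section~\ref{SectNew}: for \emph{every} $C\ge 1$ (not just $C<3/2$) there is a renorming of $\ell_1$ under which the unit vector system has Property~(A) and $K_s\ge C$. That argument is structurally unrelated to your plan: it specialises the machinery of the spaces $\BB[\ww,\Sym]$ from Section~\ref{sect:main} to $\Sym=\ell_1$ with a truncated weight $\ww_m$, invokes Theorem~\ref{thm:main} to obtain Property~(A), and reads off the growth of the suppression constant from the parameters $\dist_m(d_1(\ww),\ell_1[\ww])$.

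Your proposal has a genuine gap. You correctly isolate the central tension---finding a convex, positively homogeneous $\Psi_K$ that is simultaneously invariant under greedy rearrangements---but you never construct $\Psi_K$. You concede that the heuristic candidate $\alpha_K\, M(x)\,\abs{n^+(x)-n^-(x)}$ fails both convexity and the bound $\Psi_K\le \alpha_K\norm{\cdot}_1$, and then assert only that $\Psi_K$ ``has to be defined instead as a supremum of linear test functionals indexed over a carefully chosen family,'' without ever specifying that family. But this is precisely the content of the proof: absent an explicit family one cannot verify that greedy rearrangements act bijectively on the index set, that the supremum is comparable to $\norm{\cdot}_1$, or that your test vectors $x_N$ and $P_N x_N$ behave as asserted. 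As it stands, the proposal restates the problem rather than solving it.
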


However, Question~G (which was already implicit in \cite{AW2006} and has been explicitly raised in \cite{AlbiacAnsorena2017b}*{Problem 4.4}, \cite{AABHO2024}*{Question 1.5}, and \cite{AAT2024}*{Problem 6}) has remained elusive for the last twenty years. In this note we answer Question~G negatively, thus closing the cycle of ideas revolving around isometric greedy-like bases. We will split in two sections the construction of the Banach spaces and the bases that will serve as counterexamples. In Section~\ref{Sect2} we will introduce the necessary ingredients for the construction and state some auxiliary results; Section~\ref{sect:main} is devoted to defining the basis and to exhibiting its main properties. In Section~\ref{SectNew} we will apply our construction to find counterexamples that settle other problems in the theory. On the one hand, answering Question 3.10 from \cite{AABHO2024}, we show the existence of an unconditional Schauder basis that has Property~(A) (together with other isometric-like features) but whose suppression unconditional basis constant can be as large as we wish; on the other hand we also solve Problem 3.4 from \cite{AlbiacAnsorena2017b} by building a $1$-suppression quasi-greedy basis which fails to be unconditional. This is somewhat surprising because $1$-quasi-greedy bases happen to be unconditional with suppression unconditional constant equal to 1. In Section~\ref{Sect4} we make a brief analysis of the geometry of the spaces that witness our construction and we close in Section~\ref{OQ} with a few hints on how to take further the research on this topic.

Throughout this note we will use the notation and terminology that are by now standard in greedy approximation
and Banach space theory, as can be found in \cite{AABW2021} and \cite{AlbiacKalton2016}.
We will also use concepts from function space theory, for which we refer the reader to \cite{BennettSharpley1988}.
For the sake of expositional easy, other more specific terminology will be introduced in context when it is used.

\section{The ingredients of the construction}\label{Sect2}\noindent
To show the existence of conditional bases with Property~(A) we will build the Banach spaces where we can find them with the aid of two ingredients. The first ingredient will be a \emph{weight}, i.e., a sequence $\ww=(w_n)_{n=1}^\infty$ in $[0,\infty)$ with $w_1>0$. We will use $\prim[\ww]=(s_m)_{m=1}^\infty$ for the \emph{primitive sequence} of $\ww$ given by
\[
s_m=\sum_{n=1}^m w_n, \quad m\in \NN.
\]
We say that $\ww$ is null if $\lim_n w_n=0$, and that it is summable if $\sum_{n=1}^\infty w_n<\infty$.

The other ingredient of our construction will be a \emph{symmetric Banach space}, i.e., a Banach space $\Sym$ built from a rearrangement invariant function norm $\norm{\cdot}_{\Sym}$ over $\NN$ such that $c_{00} \subset \Sym\subset\FF^\NN$, where $\FF$ denotes the real or complex field $\FF$, and $c_{00}$ denotes the linear span of the unit vectors $(\ee_n)_{n=1}^\infty$ of $\FF^\NN$. More generally, spaces constructed from a rearrangement invariant function quasi-norm will be called \emph{symmetric quasi-Banach spaces}.

Let $\EE$ (resp., $\DD$) denote the set consisting of all scalars of modulus one (resp., modulus at most one). Let $(\ee_n^*)_{n=1}^\infty$ be the sequence of dual functionals defined by $\ee_n^*(f)=a_n$ for all $n\in\NN$ and all $f=(a_k)_{k=1}^\infty\in\FF^\NN$. Given $A\subset\NN$ finite and $\varepsilon=\enpar{\varepsilon_n}_{n\in A}$ in $\EE$, we put
\[
\Ind_{\varepsilon,A}=\sum_{n\in A} \varepsilon_n \, \ee_n, \quad\text{and}\quad
\Ind^*_{\varepsilon,A}=\sum_{n\in A} \varepsilon_n \, \ee_n^*.
\]
Associated with a symmetric quasi-Banach space $\Sym$ there is a sequence $\FuFu[\Sym]=(\Lambda_m)_{m=1}^\infty$ in $(0,\infty)$, called the \emph{fundamental function} of $\Sym$, such that
\[
\norm{\Ind_{\varepsilon,A}}_\Sym=\Lambda_{\abs{A}}, \quad A\subset\NN, \, \abs{A}<\infty,\, \varepsilon\in \EE^A.
\]
More generally, given a basis $(\xx_n)_{n=1}^\infty$ in a Banach space $\XX$, $A\subset\NN$ with $\abs{A}<\infty$, and $\varepsilon\in\EE^A$ we put
\[
\Ind_{\varepsilon,A}[\XB,\XX]=\sum_{n\in A} \varepsilon_n \, \xx_n,
\]
and define the \emph{fundamental function} of $\XB$ as
\[
\usdf_m[\XB,\XX]=\sup\enbrace{ \norm{\Ind_{\varepsilon,A}[\XB,\XX]} \colon \abs{A}\le m, \, \varepsilon\in\EE^A}, \quad m\in\NN.
\]

Now, given a weight $\ww$ and a symmetric Banach space $\Sym$ we define
\[
\norm{\cdot}_{\Sym[\ww]} \colon \FF^\NN \to [0,\infty], \quad f\mapsto \norm{f}_{\Sym[\ww]}=\sup_{\abs{A}<\infty} \frac{s_{\abs{A}}}{\Lambda_{\abs{A}}}\norm{S_A(f)}_\Sym,
\]
where $S_A\colon \FF^\NN\to \FF^\NN$ is the canonical coordinate projection onto the linear space of all sequences supported on $A$.

Let $\Sym[\ww]$ be the space of all sequences $f\in\FF^\NN$ such that $ \norm{f}_{\Sym[\ww]}<\infty$.
If $f^*=(a_n)_{n=1}^\infty$ is the non-increasing rearrangement of (the absolute value of) a sequence $f\in c_{00}$, then
\[
\norm{f}_{\Sym[\ww]}=\sup_{m\in\NN} \frac{s_m}{\Lambda_m} \norm{\sum_{n=1}^m a_n \, \ee_n}_\Sym.
\]
In particular, given $A\subset\NN$ finite and $\varepsilon\in\EE^A$,
\begin{equation}\label{eq:FFSw}
\norm{\Ind_{\varepsilon,A}}_{\Sym[\ww]}= \Lambda_{\abs{A}} \sup_{m\ge \abs{A}} \frac{s_m}{\Lambda_m}.
\end{equation}
Thus, the space $\Sym[\ww]$ associated with $\Sym$ and $\ww$ is a symmetric Banach space if and only if $(s_n/\Lambda_n)_{n=1}^\infty$ is bounded; otherwise $\Sym[\ww]$ is a trivial vector space. If $\Sym$ is a Banach space, so is $\Sym[\ww]$.

We will need to compare in some way $\Sym[\ww]$ with Lorentz sequence spaces, whose definition we recall for the sake of self-reference.

Given a weight $\ww$ and $0<q\le\infty$ there is a unique map $\norm{\cdot }_{q,\ww}\colon\FF^\NN \to[0,\infty]$ such that
\[
\norm{f}_{q,\ww}=\enpar{\sum_{n=1}^\infty (a_n s_n)^q \frac{w_n}{s_n}}^{1/q}, \quad f\in c_{00}, \, f^*=(a_n)_{n=1}^\infty,
\]
with the usual modification if $q=\infty$, and
\[
\norm{f}_{q,\ww}=\sup_{\abs{A}<\infty} \norm{S_A(f)}_{q,\ww}, \quad f\in\FF^\NN.
\]
The Lorentz space $d_{q}(\ww)$ is the space of all sequences $f\in\FF^\NN$ such that $\norm{f}_{q,\ww}<\infty$. If $0<p<\infty$ and $\ww$ is equivalent to $(n^{1/p})_{n=1}^\infty$, the space $d_q(\ww)$ is the classical Lorentz sequence space $\ell_{p,q}$.

As is customary, given families of nonnegative numbers $(\alpha_i)_{i\in I}$ and $(\beta_i)_{i\in I}$, the symbol $\alpha_i\lesssim \beta_i$ for $i\in I$ means that there is a constant $C$ such that $\alpha_i \le C \beta_i$ for all $i\in I$. If $\alpha_i\lesssim \beta_i$ and $\beta_i\lesssim \alpha_i$ for $i\in I$ we say that $(\alpha_i)_{i\in I}$ and $(\beta_i)_{i\in I}$ are equivalent, and we write $\alpha_i\approx \beta_i$ for $i\in I$.

It is known \cite{AABW2021} that if $0<q<\infty$ and $\uu=(u_n)_{n=1}^\infty$ are such that $\prim[\uu]\approx \prim^q[\ww]$ then
\begin{equation}\label{eq:EquivalentLorentz}
\norm{f}_{q,\ww}\approx \norm{f}_{(q,\uu)} :=\enpar{\sum_{n=1}^\infty a_n^q u_n}^{1/q},
\quad f\in c_{00}, \, f^*=(a_n)_{n=1}^\infty.
\end{equation}

The gauge $\norm{\cdot}_{q,\ww}$ is not a function norm in general. Notwithstanding, if the primitive weight of $\ww$ is doubling then $\norm{\cdot}_{q,\ww}$ is a rearrangement function quasi-norm, whence $d_q(\ww)$ is a symmetric quasi-Banach space.

Given $0<q< p \le \infty$ we have
\[
\norm{ \cdot}_{p,\ww} \lesssim \norm{ \cdot}_{q,\ww},
\]
and unless $\ww$ is summable, in which case $d_q(\ww)=\ell_\infty$ for all $q\in(0,\infty]$,
$\norm{ \cdot}_{p,\ww}$ and $\norm{ \cdot}_{q,\ww}$ are not equivalent. However, we cannot distinguish the spaces using constant coefficient vectors; in fact, $\FuFu[d_q(\ww)]\approx \prim[\ww]$ for all $0<q\le \infty$.

If $\ww$ is non-increasing, $\norm{\cdot}_{1,\ww}$ is a function norm. More generally, we have the following.
\begin{lemma}\label{lem:LorentzNormable}
Let $1\le q<\infty$ and let $\uu=(u_n)_{n=1}^\infty$ be a non-increasing weight. Then the gauge $\norm{\cdot}_{(q,\uu)}$ defined in \eqref{eq:EquivalentLorentz} extends to a function norm.
\end{lemma}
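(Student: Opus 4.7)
The plan is to exhibit the gauge as a supremum of weighted $\ell^q$-seminorms on $\FF^\NN$, from which every function-norm axiom follows at once. Positivity, absolute homogeneity, the lattice property, the Fatou property, and rearrangement invariance are all transparent consequences of the defining formula on non-increasing rearrangements together with the fact that $|f|\le|g|$ implies $f^*\le g^*$ coordinatewise; definiteness is inherited from $u_1>0$. The only substantive point to prove is therefore subadditivity.

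First I would establish, for $f\in c_{00}$, the identity
\[
\norm{f}_{(q,\uu)}^q \;=\; \sup_{\pi} \sum_{n=1}^\infty |f(\pi(n))|^q\, u_n,
\]
where $\pi$ runs over the bijections of $\NN$. Because $\uu$ is non-increasing and $((f^*_n)^q)_{n=1}^\infty$ is non-increasing, and because $f$ has finite support, the classical Hardy--Littlewood rearrangement inequality (in its finite-dimensional form, applied after restricting $\pi$ to a large enough cofinite set) shows that the right-hand side is maximized by the permutation that aligns the largest entries of $|f|$ with the initial indices, giving exactly $\sum_n (f^*_n)^q\, u_n$. Writing $T_\pi f := f\circ\pi$ and denoting by $\norm{\cdot}_{\ell^q(\uu)}$ the weighted $\ell^q$-norm on $\FF^\NN$, the identity reads
\[
\norm{f}_{(q,\uu)} \;=\; \sup_\pi \norm{T_\pi f}_{\ell^q(\uu)}, \qquad f\in c_{00}.
\]

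Second, since $q\ge 1$, Minkowski's inequality makes $\norm{\cdot}_{\ell^q(\uu)}$ a norm; hence for every bijection $\pi$ the map $f\mapsto \norm{T_\pi f}_{\ell^q(\uu)}$ is a seminorm (as $T_\pi$ is linear), and a supremum of seminorms is a seminorm. Subadditivity on $c_{00}$ follows. Finally, one extends to all of $\FF^\NN$ via the sup-over-finite-support-projections formula that the paper already uses for $\norm{\cdot}_{q,\ww}$: triangle inequality transfers to the extension through
\[
\norm{S_A(f+g)}_{(q,\uu)} \;\le\; \norm{S_A f}_{(q,\uu)} + \norm{S_A g}_{(q,\uu)} \;\le\; \norm{f}_{(q,\uu)} + \norm{g}_{(q,\uu)},
\]
followed by a supremum over finite $A\subset\NN$, and the remaining axioms likewise pass to the extension.

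The main (and only) obstacle is the rearrangement-inequality identity above. It is nevertheless only needed for finitely supported $f$, so it reduces to a finite-permutation argument; the passage to $\FF^\NN$, and the verification of the remaining Banach function-norm axioms, are then entirely routine.
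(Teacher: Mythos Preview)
Your proposal is correct and follows essentially the same approach as the paper: the paper's proof is the two-line version of your argument, writing $\norm{f}_{(q,\uu)}=\sup_{\pi}\bigl(\sum_n |b_{\pi(n)}|^q u_n\bigr)^{1/q}$ for $f=(b_n)\in c_{00}$ via the rearrangement inequality and concluding subadditivity immediately. Your added details on the extension to $\FF^\NN$ and the remaining function-norm axioms simply spell out what the paper leaves implicit.
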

\begin{proof}
By the rearrangement inequality,
\[
\norm{f}_{(q,\uu)}=\sup_{\pi\in \SymG(\NN)}\enpar{ \sum_{n=1}^\infty \abs{b_{\pi(n)}}^q u_n }^{1/q}, \quad f=(b_n)_{n=1}^\infty\in c_{00}.
\]
Therefore $\norm{\cdot}_{(q,\uu)}$ is subadditive.
\end{proof}

A sequence $(t_m)_{m=1}^\infty$ of nonnegative scalars is said to be \emph{essentially decreasing} if $t_n\lesssim t_m$ for all $n$, $m\in\NN$ such that $n\ge m$. The sequence $(t_m)_{m=1}^\infty$ is essentially decreasing if and only if there is a non-increasing sequence $(t_m')_{m=1}^\infty$ such that $t_m \approx t_m'$ for $m\in \NN$.

\begin{lemma}\label{lem:embedding}
Let $\ww$ be a weight and $\Sym$ be a symmetric Banach space. Set $\prim[\ww]=(s_m)_{m=1}^\infty$, $\FuFu[\Sym]=(\Lambda_m)_{m=1}^\infty$ and $r_m=s_m/\Lambda_m$, $m\in\NN$. Suppose that $(r_m)_{m=1}^\infty$ is essentially decreasing and put $C=\sup_{m\ge n} r_m/r_n$. Then
\[
\norm{f}_{\infty,\ww} \le \norm{f}_{\Sym[\ww]} \le C\norm{f}_{1,\ww},\quad f\in\FF^\NN,
\]
so that $d_1(\ww) \subset \Sym[\ww]\subset d_\infty(\ww)$ continuously. Besides,
\begin{enumerate}[label=(\roman*), leftmargin=*, widest=ii]
\item\label{it:embedding:A} If $r_m\approx 1$ for $m\in\NN$, then $\Sym[\ww]$ is just a renorming of $\Sym$.
\item\label{it:embedding:B} If $\lim_m r_m=0$, then the unit vector system of $\Sym[\ww]$ fails to be boundedly complete.
\item\label{it:embedding:C} If $\lim_m r_m=0$ and $\ww$ is non-summable, then $\Sym[\ww]\not=d_1(\ww)$.
\end{enumerate}
\end{lemma}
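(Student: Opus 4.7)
The plan is to first prove the two-sided estimate $\norm{f}_{\infty,\ww}\le\norm{f}_{\Sym[\ww]}\le C\norm{f}_{1,\ww}$, from which the embeddings $d_1(\ww)\subset\Sym[\ww]\subset d_\infty(\ww)$ are immediate, and then to address (i), (ii), (iii) separately. For the lower bound, with $f^*=(a_n)_n$ the non-increasing rearrangement of $\abs{f}$, the lattice property of $\Sym$ gives $\norm{\sum_{k=1}^m a_k\,\ee_k}_\Sym\ge a_m\Lambda_m$, so that $r_m\norm{\sum_{k=1}^m a_k\,\ee_k}_\Sym\ge a_m s_m$ and taking the supremum over $m$ yields the claim. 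For the upper bound, apply Abel's summation formula $\sum_{k=1}^m a_k\,\ee_k=\sum_{k=1}^{m-1}(a_k-a_{k+1})\Ind_{[1,k]}+a_m\Ind_{[1,m]}$ and the triangle inequality in $\Sym$ to get $\norm{\sum_{k=1}^m a_k\,\ee_k}_\Sym\le\sum_{k=1}^{m-1}(a_k-a_{k+1})\Lambda_k+a_m\Lambda_m$; then use the essentially-decreasing estimate $r_m\Lambda_k\le C s_k$ (valid for $k\le m$, which rephrases the hypothesis on $(r_m)$) and perform a second Abel summation to collapse the result to $r_m\norm{\sum_{k=1}^m a_k\,\ee_k}_\Sym\le C\sum_{k=1}^m a_k w_k\le C\norm{f}_{1,\ww}$.

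For (i), if $r_m\approx 1$ then the weights $s_m/\Lambda_m$ appearing in the definition of $\norm{\cdot}_{\Sym[\ww]}$ are bounded above and below by positive constants; since $\norm{\sum_{k=1}^m a_k\,\ee_k}_\Sym$ is non-decreasing in $m$ (lattice property, as $a_k\ge 0$), the supremum in $m$ equals $\norm{f^*}_\Sym=\norm{f}_\Sym$ and the conclusion follows. For the summable sub-case of (ii), take $\Ind_\NN\in\Sym[\ww]$: by \eqref{eq:FFSw} its norm equals $\sup_m s_m=s_\infty<\infty$, its partial sums $\Ind_{[1,N]}$ are uniformly bounded by $s_\infty$, yet each tail $\Ind_{(N,\infty)}$ has the same non-increasing rearrangement as $\Ind_\NN$ and thus retains norm $s_\infty>0$, so the unit vector system fails to be boundedly complete.

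For the non-summable sub-case of (ii) together with (iii), I would construct a single test vector with dyadic block structure that serves both purposes. Let $A_k\subset\NN$ be pairwise disjoint with $\abs{A_k}=2^k$, and set $g=\sum_{k\ge 1}\alpha_k\Ind_{A_k}$ with $\alpha_k=1/s_{2^k}$, so that each block $\alpha_k\Ind_{A_k}$ has $\Sym[\ww]$-norm $\approx 1$ by \eqref{eq:FFSw}. A careful analysis of the non-increasing rearrangement $g^*|_{[1,m]}$ truncated to arbitrary $m$, carried out via Abel decomposition inside the $\Sym$-norm and the essentially-decreasing bound $r_m\Lambda_{N_j}\le C s_{N_j}$ with $N_j=\abs{A_1}+\cdots+\abs{A_j}$, should give $\norm{g}_{\Sym[\ww]}\le C$ uniformly. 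The partial sums $\sum_{k\le K}\alpha_k\Ind_{A_k}$ of $g$ in the unit vector basis are thus bounded, whereas each tail $g-\sum_{k\le K}\alpha_k\Ind_{A_k}$ has rearrangement of the same dyadic form shifted to start at block $K+1$, and the explicit computation $r_{2^{K+1}}\cdot\alpha_{K+1}\Lambda_{2^{K+1}}=1$ shows these tails stay bounded below, proving (ii). Finally, $\norm{g}_{1,\ww}=\sum_k\alpha_k(s_{N_k}-s_{N_{k-1}})=\infty$ under non-summability (each summand stays bounded below by a positive constant, as one checks in the regularly-varying cases), so $g\in\Sym[\ww]\setminus d_1(\ww)$ and (iii) follows.

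The main obstacle is establishing the sharp upper bound $\norm{g}_{\Sym[\ww]}\le C$ for the dyadic block vector. The naive triangle inequality yields $\sum_k 1=\infty$, and the general upper bound $\norm{\cdot}_{\Sym[\ww]}\le C\norm{\cdot}_{1,\ww}$ from the first step is useless here precisely because $\norm{g}_{1,\ww}=\infty$. The resolution is to work directly with the defining formula $\norm{g}_{\Sym[\ww]}=\sup_m(s_m/\Lambda_m)\norm{g^*|_{[1,m]}}_\Sym$ and exploit both the dyadic block geometry of $g^*$ and the full force of the essentially-decreasing property of $(r_m)$ to telescope the Abel-decomposed $\Sym$-norm uniformly in $m$.
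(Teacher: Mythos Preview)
Your direct proof of the two-sided estimate via Abel summation is correct and more self-contained than the paper's approach, which simply invokes general embedding results from the literature based on the fundamental-function equivalence. Part~(i) and the summable sub-case of (ii) are also fine.

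The genuine gap is in your dyadic construction for the non-summable case of~(ii) and for~(iii): the uniform bound $\norm{g}_{\Sym[\ww]}\le C$ does \emph{not} follow from Abel decomposition plus the essentially-decreasing estimate. For $N_{k-1}<m\le N_k$ your Abel bound gives
\[
r_m\bigl\|g^*|_{[1,m]}\bigr\|_\Sym \le C\sum_{j=1}^{k-1}(\alpha_j-\alpha_{j+1})\,s_{N_j} + \alpha_k s_m,
\]
and a second Abel summation turns the right-hand side into $C\sum_{j=1}^{k}\alpha_j(s_{N_j}-s_{N_{j-1}})$ up to a bounded term. But each summand here is bounded \emph{below} by a positive constant---this is precisely what you invoke to get $\norm{g}_{1,\ww}=\infty$---so the sum grows like $k$ and nothing telescopes. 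Concretely, take $\Sym=\ell_1$ and $w_n=1/\log(n+1)$: then $r_m\approx 1/\log m\to 0$, $\alpha_j\approx j\,2^{-j}$, $s_{N_j}-s_{N_{j-1}}\approx 2^j/j$, and every term is $\approx 1$. The paper's device is to abandon fixed dyadic geometry and instead choose block lengths $n_k$ \emph{superlacunarily adapted to the decay of $r_m$}, namely so that $\sum_{j<k}1/r_{n_j}\le 1/r_{n_k}$ (possible exactly because $r_m\to 0$). With $\xx_j=s_{n_j}^{-1}\Ind_{\sigma_j}$ on consecutive blocks of length $n_j$, the crude triangle inequality already gives $\bigl\|\sum_{j\le k}\xx_j\bigr\|_\Sym\le\sum_{j\le k}1/r_{n_j}\le 2/r_{n_k}$, and a short interpolation handles intermediate $m$. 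This yields $g=\sum_j\xx_j\in\Sym[\ww]$ with bounded partial sums but nonvanishing tails, proving~(ii); part~(iii) then follows from~(ii) together with the bounded completeness of the unit vector basis of $d_1(\ww)$ when $\ww$ is non-summable, rather than by a direct construction.
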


\begin{proof}
By \eqref{eq:FFSw}, the fundamental function of $\Sym[\ww]$ is equivalent to the primitive weight of $\ww$. This fact implies the desired embeddings for a wide class of sequence spaces containing symmetric Banach spaces (see \cite{AAB2021}*{Theorem 9.12} or \cite{AADK2016}*{Lemma 6.1} for a more elemental proof that works for unconditional democratic bases).

\ref{it:embedding:A} is clear. To prove \ref{it:embedding:B} we will construct a block basis sequence $(\xx_j)_{j=1}^\infty$ equivalent to the unit vector system of $\ell_\infty$. Choose $(n_k)_{k=1}^\infty$ such that
\[
\sum_{j=1}^{k-1} \frac{1}{r_{n_j}} \le \frac{1}{r_{n_k}}, \quad k\in\NN.
\]
For $k\in\NN_0$ let $m_k=\sum_{j=1}^k n_j$, and define
\[
\xx_j=\frac{1}{s_{n_j}} \Ind_{\sigma_j}, \quad \sigma_j=(m_{j-1},m_j]\cap \ZZ.
\]
We shall prove that $g:=\sum_{j=1}^\infty \xx_j\in \Sym[\ww]$. Note that $g=(a_n)_{n=1}^\infty$ is nonnegative and non-increasing. For each $k\in\NN$ we have
\[
\norm{\sum_{n=1}^{m_k} a_n \, \ee_n}_\Sym
=\norm{\sum_{j=1}^k \xx_j}_\Sym
\le \sum_{j=1}^k \norm{\xx_j}_\Sym
=\sum_{j=1}^k \frac{1}{r_{n_j}}\le \frac{2}{r_{n_k}}.
\]
Pick $m\in\NN$ and let $k\in\NN$ be such that $m_{k-1}<m\le m_k$. Then, with the convention that $n_0=0$ and $r_0=\infty$,
\begin{multline*}
\norm{\sum_{n=1}^m a_n \, \ee_n}_\Sym
\le \norm{\sum_{n=1}^{m_{k-1}} a_n \, \ee_n}_\Sym
+ \norm{\sum_{n=1+m_{k-1}}^m a_n \, \ee_n}_\Sym\\
\le \frac{2}{r_{n_{k-1}}} + \frac{\Lambda_{m-m_{k-1}}}{s_{m_k}}
\le \frac{2C}{r_m} +\frac{\Lambda_m}{s_m}=(1+2C) \frac{\Lambda_m}{s_m}.
\end{multline*}
Taking into account that the unit vector system of $d_1(\ww)$ is boundedly complete whenever $\ww$ is non-summable,
\ref{it:embedding:C} is a consequence of \ref{it:embedding:B}.
\end{proof}

We record a few well-known properties of symmetric Banach spaces to which we will refer below.

\begin{proposition}\label{prop:dualsymmetric}Given a symmetric Banach space $\Sym$, let us denote by $\Sym'$ its associate space and by $\Sym_0$ the closure of $c_{00}$ in $\Sym$. Let $(\Lambda_n)_{n=1}^\infty$ and $(\Lambda_n')_{n=1}^\infty$ be the fundamental functions of $\Sym$ and $\Sym'$, respectively. The following statements hold.
\begin{enumerate}[label=(\roman*), leftmargin=*, widest=iii]
\item\label{it:sym:a} $\Sym_0^*=\Sym'$.
\item\label{it:sym:b} $\Lambda_n \, \Lambda'_n=n$ for all $n\in\NN$.
\item\label{it:sym:c} If $\Lambda_n\approx 1$ for $n\in\NN$, then $\Sym=\ell_\infty$.
\item\label{it:sym:d} If $\Lambda_n\approx n$ for $n\in\NN$, then $\Sym=\ell_1$.
\end{enumerate}
\end{proposition}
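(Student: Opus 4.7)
The proposition records four classical facts about symmetric (i.e., rearrangement-invariant) Banach sequence spaces, and I would present proofs that follow the standard Bennett--Sharpley approach, combining Hölder-type duality with the Hardy--Littlewood--Pólya majorization principle, together with the monotonicity of function norms.

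Part \ref{it:sym:a} is the statement that the Köthe dual of a rearrangement invariant function norm equals the Banach dual of its order-continuous subspace. This is built into the definition of the associate space: every $g \in \Sym'$ induces a bounded functional on $\Sym_0$ by $f \mapsto \sum a_n b_n$ with norm at most $\norm{g}_{\Sym'}$, while in the converse direction a functional $\varphi \in \Sym_0^*$ is identified with the sequence $g_n = \varphi(\ee_n)$, and for any $f \in c_{00}$ a sign choice plus the definition of $\norm{\cdot}_{\Sym'}$ forces $\norm{g}_{\Sym'} \le \norm{\varphi}$. I would simply cite \cite{BennettSharpley1988}*{Chapter 1, Theorem 2.9} (or its sequence-space analogue) rather than reprove it.

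For part \ref{it:sym:b}, I would prove the two inequalities separately. The bound $\Lambda_n \Lambda_n' \ge n$ is immediate from testing the pairing $\langle \Ind_{[1,n]},\Ind_{[1,n]}^*\rangle = n$ against Hölder's inequality for the associate space. For the reverse bound $\Lambda_n' \le n/\Lambda_n$, note first that by rearrangement invariance the supremum defining $\Lambda_n'$ can be restricted to nonnegative non-increasing $f = (a_k)_{k=1}^\infty$ with $\norm{f}_\Sym \le 1$, so that $\Lambda_n' = \sup \{\sum_{k=1}^n a_k\}$. Setting $b = n^{-1}\sum_{k=1}^n a_k$, one checks that $b\,\Ind_{[1,n]}$ is weakly majorized by $f$ — for $m \le n$ this reduces to the fact that the Cesàro averages of a non-increasing sequence are non-increasing, and for $m > n$ it is trivial. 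The Hardy--Littlewood--Pólya principle then gives $\norm{b\,\Ind_{[1,n]}}_\Sym \le \norm{f}_\Sym \le 1$, hence $b\,\Lambda_n \le 1$, i.e., $\sum_{k=1}^n a_k \le n/\Lambda_n$. This majorization step is the main analytic ingredient and the only part that is not purely formal bookkeeping.

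Parts \ref{it:sym:c} and \ref{it:sym:d} are then easy consequences. For \ref{it:sym:c}, if $\Lambda_n \approx 1$, the lattice property of the function norm gives, for any $f \in c_{00}$,
\[
\Lambda_1 \norm{f}_\infty \le \norm{f}_\Sym \le \norm{f}_\infty\, \Lambda_{\abs{\supp f}} \lesssim \norm{f}_\infty,
\]
and the Fatou property built into the symmetric Banach space structure extends this equivalence from $c_{00}$ to all of $\FF^\NN$, yielding $\Sym = \ell_\infty$ with equivalent norms. For \ref{it:sym:d}, part \ref{it:sym:b} turns the hypothesis $\Lambda_n \approx n$ into $\Lambda_n' \approx 1$, so \ref{it:sym:c} applied to $\Sym'$ gives $\Sym' = \ell_\infty$. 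Since the associate norm is insensitive to passing between equivalent function norms, $\Sym'' = (\ell_\infty)' = \ell_1$, and the Fatou property $\Sym = \Sym''$ for symmetric Banach spaces closes the argument. The only nontrivial step in the whole proposition is the majorization bound in \ref{it:sym:b}; everything else is extraction of standard consequences.
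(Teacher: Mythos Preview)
Your proposal is correct and follows the same route as the paper: the paper simply cites \cite{BennettSharpley1988}*{Theorems 4.1 and 5.2} (and \cite{LinTza1977}*{Proposition 3.a.6}) for \ref{it:sym:a} and \ref{it:sym:b}, declares \ref{it:sym:c} clear, and derives \ref{it:sym:d} by duality from \ref{it:sym:b} and \ref{it:sym:c}. Your write-up merely unpacks the standard Bennett--Sharpley arguments behind those citations (H\"older plus the HLP majorization inequality for \ref{it:sym:b}, the lattice/Fatou property for \ref{it:sym:c}, and the Lorentz--Luxemburg biduality $\Sym=\Sym''$ for \ref{it:sym:d}), so the two treatments are essentially identical in content.
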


\begin{proof}
\ref{it:sym:a} and \ref{it:sym:b} are particular cases of \cite{BennettSharpley1988}*{Theorem 4.1 and
Theorem5.2} (see also \cite{LinTza1977}*{Proposition 3.a.6}). \ref{it:sym:c} is clear, and \ref{it:sym:d} follows by duality from \ref{it:sym:b} and \ref{it:sym:c}.
\end{proof}

\begin{corollary}\label{cor:Uniquel1loo}
Let $\ww$ be a non-increasing weight. Suppose $\Sym$ is a symmetric Banach space with
$\FuFu[\Sym] \approx \prim[\ww]$. If $\Sym \not=d_1(\ww)$ then $\ww$ is null and non-summable.
\end{corollary}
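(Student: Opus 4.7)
The plan is to prove the contrapositive: assume $\ww$ is either not null or summable, and show in each case that $\Sym$ coincides with $d_1(\ww)$. In both degenerate regimes the hypothesis $\FuFu[\Sym]\approx \prim[\ww]$ pins the fundamental function of $\Sym$ to one of the two extremal asymptotics (linear growth or boundedness), and Proposition~\ref{prop:dualsymmetric} identifies $\Sym$ completely from this information. The parallel computation on the Lorentz side then identifies $d_1(\ww)$ with the same classical space.

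Suppose first that $\ww$ is not null. Since $\ww$ is non-increasing this forces $\inf_n w_n \ge c > 0$, so $s_n \approx n$ and hence $\Lambda_n \approx n$. Proposition~\ref{prop:dualsymmetric}\ref{it:sym:d} then gives $\Sym = \ell_1$ (with equivalent norm). On the Lorentz side, for $f\in c_{00}$ with non-increasing rearrangement $f^*=(a_n)_{n=1}^\infty$ we have $\norm{f}_{1,\ww}=\sum_n a_n w_n$, which is equivalent to $\norm{f}_{\ell_1}$ because $w_n \approx 1$; thus $d_1(\ww)=\ell_1$, and the two spaces agree.

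Suppose instead that $\ww$ is summable. Then $(s_n)_{n=1}^\infty$ is bounded above and, since $s_n \ge w_1 > 0$, we have $s_n\approx 1$, so $\Lambda_n \approx 1$. Proposition~\ref{prop:dualsymmetric}\ref{it:sym:c} yields $\Sym = \ell_\infty$. Meanwhile the excerpt already observes that $d_q(\ww) = \ell_\infty$ for every $q \in (0,\infty]$ when $\ww$ is summable, so $d_1(\ww) = \ell_\infty$ and again $\Sym = d_1(\ww)$.

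In both cases the conclusion contradicts the hypothesis $\Sym\not=d_1(\ww)$, so $\ww$ must be both null and non-summable. I do not anticipate any real obstacle: the corollary is essentially a repackaging of the endpoint cases \ref{it:sym:c} and \ref{it:sym:d} of Proposition~\ref{prop:dualsymmetric}, and the only step to be careful with is translating ``non-increasing and not null'' into $w_n \approx 1$ and ``summable'' into $s_n \approx 1$ before invoking those endpoint identifications.
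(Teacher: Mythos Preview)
Your proof is correct and follows essentially the same approach as the paper's: prove the contrapositive by handling the two degenerate cases (not null, summable), in each case identifying both $\Sym$ and $d_1(\ww)$ with the same classical space ($\ell_1$ or $\ell_\infty$) via Proposition~\ref{prop:dualsymmetric}. The paper's version is simply terser, stating directly that $d_1(\ww)=\ell_1$ when $\ww$ is not null and $d_1(\ww)=\ell_\infty$ when $\ww$ is summable, then invoking Proposition~\ref{prop:dualsymmetric}.
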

\begin{proof}
If $\ww$ is not null then $d_1(\ww)=\ell_1$. If $\ww$ is summable then $d_1(\ww)=\ell_\infty$. Hence the result follows from Proposition~\ref{prop:dualsymmetric}.
\end{proof}

Not all pairs $(\ww,\Sym)$ formed by a weight $\ww$ and a symmetric Banach space $\Sym$ will be suited for our construction.

\begin{definition}
We will say that a symmetric Banach space $\Sym$ \emph{couples with} a non-increasing, non-summable weight $\ww$ if there are a constant $c\in(0,\infty)$ and a non-increasing sequence $\tb=(t_m)_{m=1}^\infty$ equivalent to $(s_n/\Lambda_n)_{n=1}^\infty$ such that, for all $m\in\NN_0$ and all $m$-tuples $(a_n)_{n=1}^m$ in $\DD$,
\begin{equation}\label{eq:KPA}
t_m \norm{ \sum_{n=1}^m a_n \, \ee_n }_{\Sym} +c w_{m+1} \le t_{m+1} \norm{ \enpar{\sum_{n=1}^m a_n \, \ee_n} + \ee_{m+1}}_{\Sym}.
\end{equation}
In this case we say that the pair $(\tb,c)$ is \emph{congruent with} the pair $(\ww,\Sym)$. If, besides, $\Sym \not= d_1(\ww)$, we say that $\Sym$ \emph{properly couples} with $\ww$.
\end{definition}

To make sense of the equation \eqref{eq:KPA} for $m=0$ we adopt the convention that $t_0=1$ and that a sum over an empty set is zero.

\begin{lemma}\label{lem:Swnotd1w}
Suppose that a symmetric Banach space $\Sym$ couples with a non-increasing, non-summable weight $\ww$. Then,
\[
d_1(\ww)\subset \Sym[\ww]\subset d_\infty(\ww).
\]
Moreover, if $\Sym$ properly couples with $\ww$ then $ d_1(\ww) \subsetneq \Sym[\ww]$, and $\ww$ is null.
\end{lemma}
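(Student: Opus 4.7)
The first inclusion chain $d_1(\ww)\subset \Sym[\ww]\subset d_\infty(\ww)$ should follow directly from Lemma~\ref{lem:embedding}. The coupling hypothesis supplies a non-increasing sequence $\tb=(t_m)_{m=1}^\infty$ equivalent to $(r_m):=(s_m/\Lambda_m)_{m=1}^\infty$, so in particular $(r_m)$ is essentially decreasing and the first assertion of Lemma~\ref{lem:embedding} applies verbatim.

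Assume now that $\Sym$ properly couples with $\ww$, i.e.\ $\Sym\neq d_1(\ww)$. I will show $\ww$ is null by contradiction: if it were not, its monotonicity would force $\ww$ to be equivalent to the constant weight $(1)_{n=1}^\infty$, hence $s_m\approx m$ and $d_1(\ww)=\ell_1$. Substituting $a_n=1$ into the coupling inequality \eqref{eq:KPA} gives
\[
t_m\Lambda_m+c\,w_{m+1}\le t_{m+1}\Lambda_{m+1},
\]
and telescoping from $m=0$ to $N-1$ (using $t_0=1$, $\Lambda_0=0$) yields $t_N\Lambda_N\ge c\,s_N\gtrsim N$. Since $\Lambda_N\le N$ by the triangle inequality in $\Sym$, this forces $t_N\gtrsim 1$; combined with the monotonicity of $\tb$ it gives $t_N\approx 1$, and therefore $\Lambda_N\approx s_N\approx N$. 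Proposition~\ref{prop:dualsymmetric}\ref{it:sym:d} then identifies $\Sym$ with $\ell_1=d_1(\ww)$, contradicting proper coupling.

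For the strict inclusion $d_1(\ww)\subsetneq\Sym[\ww]$ I plan a dichotomy on $\lim_m t_m$, which exists because $\tb$ is monotone. If $t_m\to 0$ then $r_m\to 0$, and since $\ww$ is non-summable, Lemma~\ref{lem:embedding}\ref{it:embedding:C} directly yields $\Sym[\ww]\neq d_1(\ww)$. Otherwise $t_m$ decreases to a positive limit, so $t_m\approx 1$ and $r_m\approx 1$; Lemma~\ref{lem:embedding}\ref{it:embedding:A} then says $\Sym[\ww]$ is merely a renorming of $\Sym$, and the hypothesis $\Sym\neq d_1(\ww)$ immediately transfers to $\Sym[\ww]\neq d_1(\ww)$. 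Combining either case with the inclusion from the first paragraph produces the strictness. The one delicate computation is the telescoping in the second paragraph, which hinges on the observation that taking $a_n=1$ makes the right-hand side of \eqref{eq:KPA} collapse to $t_{m+1}\Lambda_{m+1}$; everything else is a matter of unpacking the earlier lemmas once the correct dichotomy is in place.
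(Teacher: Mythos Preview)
Your argument is correct and structurally follows the paper's: both obtain the embeddings from Lemma~\ref{lem:embedding} via the essential monotonicity of $(s_m/\Lambda_m)$, and both establish the strict inclusion through the dichotomy $\lim_m t_m=0$ versus $\lim_m t_m>0$, invoking parts~\ref{it:embedding:C} and~\ref{it:embedding:A} of Lemma~\ref{lem:embedding} respectively. The only genuine difference is how the nullity of $\ww$ is obtained: the paper simply appeals to Corollary~\ref{cor:Uniquel1loo}, whereas you argue by contradiction using the coupling inequality. Your route is valid, but the telescoping of \eqref{eq:KPA} is more machinery than needed: once $\ww$ fails to be null you have $s_m\approx m$ and $\Lambda_m\le m\Lambda_1$, which already gives $r_m=s_m/\Lambda_m\gtrsim 1$ directly, without ever touching \eqref{eq:KPA}; combined with the essential monotonicity this yields $r_m\approx 1$, hence $\Lambda_m\approx m$, and Proposition~\ref{prop:dualsymmetric}\ref{it:sym:d} finishes exactly as you wrote. (A minor slip: the triangle inequality gives $\Lambda_N\le N\Lambda_1$, not $\Lambda_N\le N$, but the extra constant is harmless.)
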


\begin{proof}
The sequence $(s_n/\Lambda_n)_{n=1}^\infty$ is essentially decreasing, so the desired embeddings follow from Lemma~\ref{lem:embedding}. If $\Sym$ properly couples with $\ww$, combining Corollary~\ref{cor:Uniquel1loo} with the `besides' part of Lemma~\ref{lem:embedding}, gives the desired results.
\end{proof}

Before building
the advertised conditional bases with Property~(A) let us make sure there exist pairs $(\ww,\Sym)$ so that $\Sym$ properly couples with $\ww$.

Given a weight $\ww$ and $0<q<\infty$, we denote by $\ww_q$ the weight defined by $\prim[\ww_q]=\prim^q[\ww]$.

\begin{lemma}\label{lem:GeneralCoupling}
Let $\ww$ be a non-increasing and non-summable weight. Suppose $1\le q <\infty$ and let $\uu=(u_n)_{n=1}^\infty$ be another non-increasing weight. Put $\prim[\ww]=(s_m)_{m=1}^\infty$, $\prim[\uu]=(\lambda_m)_{m=1}^\infty$, and $t_m=s_m\lambda_m^{-1/q}$ for all $m\in\NN$. Assume that $\tb=(t_m)_{m=1}^\infty$ is non-increasing. Then $\Sym:=d_q(\uu_{1/q})$ endowed with the norm $\norm{\cdot}_{(q,\uu)}$ defined in \eqref{eq:EquivalentLorentz} is a symmetric Banach space that couples with $\ww$. In fact, $(\tb,1)$ is congruent with $(\ww,\Sym)$. Moreover, if $q>1$ or $\lim_m t_m=0$, then $\Sym$ properly couples with $\ww$.
\end{lemma}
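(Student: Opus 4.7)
The plan is to verify three things in order: that $\Sym$ is a symmetric Banach space with fundamental function $\Lambda_m = \lambda_m^{1/q}$, so that $\tb = (s_m/\Lambda_m)_{m=1}^\infty$; that the congruence inequality \eqref{eq:KPA} holds with $c = 1$; and that $\Sym \ne d_1(\ww)$ under the extra hypothesis. The first is immediate: Lemma~\ref{lem:LorentzNormable} (applied to $q \ge 1$ and the non-increasing weight $\uu$) gives that $\norm{\cdot}_{(q,\uu)}$ is a norm, and evaluating it on an indicator $\Ind_{\varepsilon,A}$ yields $\Lambda_m = \lambda_m^{1/q}$.

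For the congruence inequality, I would fix $(a_n)_{n=1}^m \in \DD^m$ and, invoking rearrangement invariance, reduce to $a_n = b_n \in [0,1]$ with $(b_n)$ non-increasing. Writing $X = \norm{\sum_n b_n \ee_n}_\Sym$ and $Y = \norm{\sum_n b_n \ee_n + \ee_{m+1}}_\Sym$, the crucial observation is that $b_n \le 1$ places the added unit at the top of the decreasing rearrangement of $(b_1, \dots, b_m, 1)$, giving
\[
X^q = \sum_{n=1}^m b_n^q u_n, \qquad Y^q = u_1 + \sum_{n=1}^m b_n^q u_{n+1}.
\]
Consider $\phi(b) = t_{m+1} Y(b) - t_m X(b)$ on the sorted cube $K = \{b \in [0,1]^m : b_1 \ge \dots \ge b_m\}$. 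A direct substitution gives $\phi(1, \dots, 1) = t_{m+1}\lambda_{m+1}^{1/q} - t_m \lambda_m^{1/q} = s_{m+1} - s_m = w_{m+1}$, so the claim reduces to showing that $\phi$ attains its minimum on $K$ at this corner.

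The core of the proof will be to establish this by moving from an arbitrary $b \in K$ to $(1, \dots, 1)$ along the linear path $b(t) = b + t((1,\dots,1) - b)$, which remains in $K$ since $b(t)_n - b(t)_{n+1} = (1-t)(b_n - b_{n+1}) \ge 0$, so the sort is preserved throughout. The partial derivatives along the path take the form
\[
\partial_{b_j} \phi(b(t)) = b(t)_j^{q-1}\bigl[t_{m+1} u_{j+1} Y(t)^{1-q} - t_m u_j X(t)^{1-q}\bigr],
\]
and the three inequalities $t_{m+1} \le t_m$, $u_{j+1} \le u_j$, and $Y(t) \ge X(t)$ (the last because $Y(t)^q - X(t)^q = u_1 + \sum_n b(t)_n^q (u_{n+1} - u_n) \ge u_{m+1} \ge 0$) combined with $1 - q \le 0$ force the bracket to be non-positive. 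Since $1 - b_j \ge 0$, this gives $d\phi/dt = \sum_j (1-b_j) \partial_{b_j}\phi \le 0$ and hence $\phi(b) \ge \phi(1,\dots,1) = w_{m+1}$. I expect the main technical obstacle to be the $q > 1$ degeneracy at $t = 0$ when some $b_j = 0$, where a formal $0 \cdot \infty$ appears in the derivative expression; a continuity argument at the boundary of $K$ (or direct check of the starting value of the path) should suffice.

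Finally, for proper coupling: if $\lim_m t_m = 0$, then $\Lambda_m/s_m = 1/t_m$ is unbounded, so the fundamental functions of $\Sym$ and $d_1(\ww)$ are inequivalent and $\Sym \ne d_1(\ww)$. If instead $q > 1$ and $\lim_m t_m > 0$, the non-increasing sequence $\tb$ is bounded away from zero, so $t_m \approx 1$, whence $\prim[\ww] \approx \prim[\uu_{1/q}]$ and $d_1(\ww) = d_1(\uu_{1/q})$ as Banach spaces; since $\uu_{1/q}$ is non-summable (because $\Lambda_m \to \infty$) and $q > 1$, the classical strict Lorentz inclusion $d_1(\uu_{1/q}) \subsetneq d_q(\uu_{1/q}) = \Sym$ then yields $\Sym \ne d_1(\ww)$.
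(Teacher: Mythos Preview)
Your proof is correct and follows essentially the same strategy as the paper: both reduce the congruence inequality to showing that $F(\bm{a}) = t_{m+1}Y - t_m X$ attains its minimum on $\DD^m$ at $(1,\dots,1)$ via a monotonicity argument driven by the three inequalities $t_{m+1}\le t_m$, $u_{j+1}\le u_j$, and $Y\ge X$. The only difference is in execution---the paper moves one coordinate at a time using the abstract fact that $x\mapsto \beta(B+bx^q)^{1/q}-\alpha(A+ax^q)^{1/q}$ is non-increasing when $A\le B$, $b\le a$, $\beta\le\alpha$, whereas you move all coordinates simultaneously along a sort-preserving linear path; your version sidesteps the (harmless) bookkeeping of sort changes, and your explicit treatment of the ``moreover'' clause simply spells out what the paper calls ``clear.''
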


\begin{proof}
$\Sym$ is a symmetic Banach space by Lemma~\ref{lem:LorentzNormable}. Note that $\FuFu[\Sym]=(\lambda_m^{1/q})_{m=1}^\infty$. Given $0\le A\le B<\infty$, $0\le b\le a <\infty$, and $0\le \beta<\alpha<\infty$, the map
\[
x \mapsto \beta \enpar{B+b x^q}^{1/q}- \alpha \enpar{A+a x^q}^{1/q}, \quad 0\le x<\infty,
\]
is non-increasing. Therefore, given $m\in\NN$, the function $F$ given by
\[
F\enpar{\bm{a}}=t_{m+1}\norm{\enpar{\sum_{n=1}^m a_n \, \ee_n}+\ee_{m+1}}-t_m\norm{\sum_{n=1}^m a_n \, \ee_n}, \quad \bm{a}=(a_n)_{n=1}^m,
\]
attains its minimum value on $\DD^m$ at the $m$-tuple $\bm{b}=(b_n)_{n=1}^m$ given by
$b_n=1$ for all $n=1$, \dots, $m$. Since $F\enpar{\bm{b}}=w_{m+1}$, $\Sym$ couples with $\ww$. The `moreover' part is clear.
\end{proof}

Before we go on we need some terminology.

A sequence $(s_n)_{n=1}^\infty$ of positive numbers is said to have the \emph{lower regularity property} (LRP for short) if there is $r\in\NN$ such that
\[
s_{rn}\ge 2 s_n, \quad n\in\NN.
\]
A sequence $\prim=(s_n)_{n=1}^\infty$ is said to have the \textit{upper regularity property} (URP for short) if
its \emph{dual sequence} $\prim^*=(n/s_n)_{n=1}^\infty$ has the LRP, i.e.,
there is $r\in\NN$ such that
\[
s_{rn}\le \frac{1}{2} r s_n, \quad n\in\NN.
\]

Given $\alpha\in\RR$, the power sequence $(n^\alpha)_{n=1}^\infty$ has the LRP if and only if $\alpha>0$, and
has the URP if and only if $\alpha<1$.

Let $\Sym_1$ and $\Sym_2$ be symmetric quasi-Banach spaces with $\Sym_1\subset\Sym_2$. We have that $\Sym_1=\Sym_2$ if and only
\[
\norm{f}_{\Sym_1} \lesssim \norm{f}_{\Sym_2}, \quad f\in c_{00}.
\]
Following \cite{AAB2021}, in order to have a quantitave estimate of how close the spaces $\Sym_1$ and $\Sym_2$ are, for each $m\in\NN$ we define $\dist_m[\Sym_1,\Sym_2]$ as the optimal constant $C$ such that
\[
\norm{\sum_{n=1}^m a_n \, \ee_n}_{\Sym_1} \le C \norm{\sum_{n=1}^m a_n \, \ee_n}_{\Sym_2}
\]
for all $m$-tuples $(a_n)_{n=1}^m$ in $\FF$.

Given a weight $\ww$ with primitive sequence $(s_n)_{n=1}^\infty$, the space $\ell_1[\ww]$ constructed from the symmetric Banach space $\Sym=\ell_1$ is the \emph{Marcinkiewicz space} $m(\ww)$ consisting of all sequences $(a_n)_{n=1}^\infty$ such that
\[
\sup_{\abs{A}<\infty}\frac{s_{\abs{A}}}{\abs{A}} \sum_{n\in A} \abs{a_n}<\infty.
\]

We record some properties of Lorentz and Marcinkiewicz sequence spaces for further reference.
\begin{proposition}[see \cites{ABW2023,AAB2021}]\label{lem:ML}
Assume that a weight $\ww=(w_n)_{n=1}^\infty$ is non-increasing, null, and non-summable. Set $\prim[\ww]=(s_n)_{n=1}^\infty$ and
\[
H_m[\ww]:=\sum_{n=1}^m \frac{w_n}{s_n}, \quad m\in\NN.
\]
\begin{enumerate}[label=(\roman*), leftmargin=*, widest=iii]
\item If $\prim[\ww]$ has the URP, then $d_\infty(\ww)=m(\ww)$.
\item Given $0<p<q\le\infty$ $d_p(\ww)\subset d_q(\ww)$ and
\[
\dist_m(d_p(\ww), d_q(\ww))=\enpar{H_m[\ww]}^{1/p-1/q}, \quad m\in\NN.
\]
\item If $\prim[\ww]$ has the URP, then $H_m[\ww] \approx \log(1+m)$ for $m\in\NN$.
\item\label{it:ML:6} $(d_1(\ww))^*=m(\ww)$.
\end{enumerate}
\end{proposition}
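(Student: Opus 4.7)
The plan is to treat each item individually, relying throughout on manipulations of non-increasing rearrangements tested against the primitive sequence $\prim[\ww]$. A unifying device is the observation that $\norm{f}_{q,\ww}^q=\sum_n (a_n^* s_n)^q\,w_n/s_n$ is an $L^q$-norm on $\NN$ with respect to the measure $\mu_\ww(\{n\})=w_n/s_n$, whose total mass on $\{1,\dots,m\}$ equals $H_m[\ww]$; with this picture the Lorentz scale becomes a weighted $L^p$-scale and most inequalities reduce to standard ones.

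For item (ii), Hölder's inequality in $L^p(\mu_\ww)$ versus $L^q(\mu_\ww)$ applied on supports of size at most $m$ yields $\norm{f}_{p,\ww}\le H_m[\ww]^{1/p-1/q}\norm{f}_{q,\ww}$, which is the upper bound for $\dist_m(d_p(\ww),d_q(\ww))$. For sharpness I would test on the extremal $f_0=\sum_{n=1}^m s_n^{-1}\,\ee_n$, whose non-increasing rearrangement is $(1/s_n)_{n=1}^m$ and for which both $\norm{f_0}_{p,\ww}^p$ and $\norm{f_0}_{q,\ww}^q$ equal $H_m[\ww]$, so that their ratio is exactly $H_m[\ww]^{1/p-1/q}$.

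For item (iii), the URP assumption $s_{rn}\le (r/2)s_n$ forces $s_n/n$ to decay geometrically along the subsequence $(r^k)_{k\ge 0}$. Splitting $H_m[\ww]=\sum_{n=1}^m w_n/s_n$ into dyadic blocks $(r^{k-1},r^k]$ and bounding each block's contribution by $(s_{r^k}-s_{r^{k-1}})/s_{r^{k-1}}$, which is uniformly bounded above (by URP) and from below by a positive constant (by non-summability together with the monotonicity of $s_n$), gives $H_m[\ww]\approx\log_r(1+m)\approx\log(1+m)$. For item (i), the inclusion $m(\ww)\subset d_\infty(\ww)$ is immediate: testing the Marcinkiewicz gauge on $A=\{1,\dots,m\}$ of the rearrangement and using $a_n^*\ge a_m^*$ yields $s_m a_m^*\le \norm{f}_{m(\ww)}$. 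The reverse inclusion $d_\infty(\ww)\subset m(\ww)$ reduces, via $a_n^*\le \norm{f}_{\infty,\ww}/s_n$, to the reverse-Hardy estimate $\sum_{n=1}^m s_n^{-1}\lesssim m/s_m$, which again follows from URP by summing a geometric series along $(r^k)$.

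For item \ref{it:ML:6}, I would combine the Hardy--Littlewood rearrangement inequality with Abel summation: given $f$ with $\norm{f}_{1,\ww}\le 1$ and $g\in\FF^\NN$ one has $\sum_n a_n b_n\le \sum_n a_n^* b_n^*$, and summation by parts rewrites the latter as a weighted average of the partial sums $s_m^{-1}\sum_{n\le m} b_n^*$. This identifies the associate gauge as $\sup_m s_m^{-1}\sum_{n=1}^m b_n^*$, which coincides with the Marcinkiewicz gauge of $g$ after reindexing over finite subsets of $\NN$; Proposition~\ref{prop:dualsymmetric}\ref{it:sym:a} then yields $(d_1(\ww))^*=m(\ww)$. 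The main obstacle is expected to be the reverse-Hardy bound underpinning item (i), the sole point at which URP is invoked in a genuinely one-sided way rather than through a two-sided geometric comparison; this step also explains why URP cannot be dispensed with in items (i) and (iii).
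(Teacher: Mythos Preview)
The paper does not prove this proposition; it is quoted with a citation to \cites{ABW2023,AAB2021}, so there is no in-paper argument to compare against. Your treatments of (i), (ii) and \ref{it:ML:6} are the standard ones and are sound: H\"older on the measure $\mu_\ww$ together with the extremal $f_0=\sum_{n\le m}s_n^{-1}\,\ee_n$ gives (ii) with equality; the reverse-Hardy bound $\sum_{n\le m}s_n^{-1}\lesssim m/s_m$ under the URP handles (i); and Abel summation correctly identifies the associate norm of $d_1(\ww)$ as $\sup_m s_m^{-1}\sum_{n\le m}b_n^*$. One caution on the last step: the paper's $m(\ww)=\ell_1[\ww]$ carries the gauge $\sup_m (s_m/m)\sum_{n\le m}b_n^*$, whose fundamental function is $s_m$ rather than $m/s_m$, so the final identification in \ref{it:ML:6} is worth re-examining against the conventions in the cited sources before you simply declare the two gauges equal.

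There is, however, a genuine gap in your argument for (iii), in the lower bound $H_m[\ww]\gtrsim\log(1+m)$. Your claim that each block $(r^{k-1},r^k]$ contributes at least a fixed positive constant amounts to $s_{r^k}/s_{r^{k-1}}\ge 1+c$ for some $c>0$, which is the LRP of $\prim[\ww]$, not the URP; non-summability by itself only gives $s_{r^k}>s_{r^{k-1}}$. Indeed the URP alone cannot supply this bound: with $s_n=\log(1+n)$ the weight $w_n=\log(1+1/n)$ is non-increasing, null and non-summable, $\prim[\ww]$ has the URP (take $r$ large), yet $H_m\approx\log\log(1+m)$. More precisely, for any non-increasing $\ww$ one checks that $H_m\approx 1+\log(s_m/w_1)$, so the upper bound $H_m\lesssim\log(1+m)$ is automatic from $s_m\le m\,w_1$ and needs no regularity at all, whereas the lower bound is equivalent to a polynomial lower bound on $s_m$, i.e.\ essentially to the LRP. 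Where the paper actually uses (iii) (Propositions~\ref{lem:L1coupleswith} and \ref{lemma:lp}) both regularity properties are present, so no damage results there; but your proof of the lower half of (iii) must appeal to the LRP, not the URP.
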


\begin{proposition}\label{lem:L1coupleswith}
The space $\ell_1$ couples with any non-increasing and non-summable weight $\ww$. In fact, if $\prim[\ww]=(s_m)_{m=1}^\infty$ and
\[
\tb=(s_m/m)_{m=1}^\infty,
\]
then $(\tb,1)$ is congruent with $(\ww,\ell_1)$. If $\ww$ is null, then $\ell_1$ properly couples with $\ww$. Moreover, if $\prim[\ww]$ has the URP, then
\[
\dist(d_1(\ww),\ell_1[\ww])\approx \log(1+m), \quad m\in\NN.
\]
\end{proposition}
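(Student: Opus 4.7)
The statement bundles three assertions: congruence of $(\tb,1)$ with $(\ww,\ell_1)$, proper coupling when $\ww$ is null, and the logarithmic distance estimate under the URP hypothesis. My plan is to treat them in this order.

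For the congruence, the first observation is that $\FuFu[\ell_1]=(n)_{n=1}^\infty$, so $\tb=(s_m/m)_{m=1}^\infty$ literally coincides with $(s_n/\Lambda_n)_{n=1}^\infty$, making the equivalence requirement trivial. To see that $\tb$ is non-increasing, a direct computation shows that $t_m\ge t_{m+1}$ is equivalent to $m w_{m+1}\le s_m$, which holds since $s_m\ge m w_m\ge m w_{m+1}$ by monotonicity of $\ww$. For the inequality \eqref{eq:KPA}, I will exploit that $\norm{\sum_{n=1}^m a_n\ee_n}_{\ell_1}=\sum_{n=1}^m|a_n|$: setting $S=\sum_{n=1}^m|a_n|\in[0,m]$ whenever $(a_n)_{n=1}^m\in\DD^m$, the coupling inequality collapses to the one-variable statement
\[
S(t_m-t_{m+1})\le t_{m+1}-w_{m+1}.
\]
The key algebraic identity is $t_m-t_{m+1}=\tfrac{1}{m}(t_{m+1}-w_{m+1})$, which follows because both sides equal $(s_m-m w_{m+1})/(m(m+1))$. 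Since this common value is non-negative (again by monotonicity of $\ww$), the inequality reduces to $S\le m$, which is automatic. Hence $(\tb,1)$ is congruent with $(\ww,\ell_1)$.

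For the proper coupling assertion, note that $d_1(\ww)=\ell_1$ precisely when $\ww$ is not null: if $\ww$ is bounded below, $\norm{\cdot}_{1,\ww}$ and $\norm{\cdot}_{\ell_1}$ are equivalent, and if $\ww$ is null, the standard argument produces sequences in $d_1(\ww)\setminus\ell_1$. Thus $\ell_1\ne d_1(\ww)$ whenever $\ww$ is null, delivering proper coupling.

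Finally, for the distance estimate I use that by construction $\ell_1[\ww]$ is the Marcinkiewicz space $m(\ww)$ recalled just before the proposition. Under the URP assumption on $\prim[\ww]$, Proposition~\ref{lem:ML}\,(i) gives $m(\ww)=d_\infty(\ww)$, part~(ii) gives $\dist_m(d_1(\ww),d_\infty(\ww))=H_m[\ww]$, and part~(iii) gives $H_m[\ww]\approx\log(1+m)$, so chaining these yields $\dist_m(d_1(\ww),\ell_1[\ww])\approx\log(1+m)$. The only step requiring genuine insight is spotting the one-variable reduction together with the identity $t_m-t_{m+1}=\tfrac{1}{m}(t_{m+1}-w_{m+1})$ in the congruence argument; everything else is direct bookkeeping with the tools already in the excerpt.
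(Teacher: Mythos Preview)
Your proof is correct. The paper takes a slightly different route for the first two assertions: rather than verifying the congruence inequality by hand, it observes that $\ell_1$ is the instance $q=1$, $\uu=(1)_{n=1}^\infty$ of Lemma~\ref{lem:GeneralCoupling}, so once one checks that $\tb=(s_m/m)_{m=1}^\infty$ is non-increasing (which you also verify), both the congruence of $(\tb,1)$ and the proper coupling under $\lim_m t_m=0$ (equivalently, $\ww$ null, by Ces\`aro) follow immediately from that lemma and its ``moreover'' clause. Your direct computation is essentially the specialization of the proof of Lemma~\ref{lem:GeneralCoupling} to this case: where the general argument shows that the affine function $F(\bm a)=t_{m+1}\norm{\sum a_n\ee_n+\ee_{m+1}}_{\Sym}-t_m\norm{\sum a_n\ee_n}_{\Sym}$ is minimized on $\DD^m$ at $(1,\dots,1)$ with value $w_{m+1}$, you reduce to the single variable $S=\sum_{n=1}^m|a_n|\in[0,m]$ and use the neat identity $t_m-t_{m+1}=\tfrac{1}{m}(t_{m+1}-w_{m+1})$ to reach the same conclusion. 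The paper's route is shorter because the machinery is already in place; yours is self-contained for the $\ell_1$ case. For the distance estimate under the URP you and the paper argue identically via Proposition~\ref{lem:ML}.
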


\begin{proof}
Since $\tb$ is non-increasing, $\ell_1$ couples with $\ww$ by Lemma~\ref{lem:GeneralCoupling}. If $\prim[\ww]$ has the URP then, by Proposition~\ref{lem:ML},
\[
\dist_m(d_1(\ww),\ell_1[\ww]) \approx \dist_m(d_1(\ww),d_\infty(\ww))\approx \log(1+m), \quad m\in\NN.\qedhere
\]
\end{proof}
Let $\ww$ be a weight and assume as usual that it is non-increasing, null, and non-summable. Lemma~\ref{lem:embedding}\ref{it:embedding:A} leads us to wonder about the existence of symmetric Banach spaces $\Sym$ with $\FuFu[\Sym]\approx \prim[\ww]$ that properly couple with $\ww$. If $\prim[\ww]$ is equivalent to $(n^\alpha)_{n=1}^\infty$ for some $0<\alpha<1$ we give a positive answer to this question.

\begin{proposition}\label{lemma:lp}
Let $1<p<\infty$ and $\ww=(w_n)_{n=1}^\infty$ be a non-increasing weight equivalent to $(n^{1/p-1})_{n=1}^\infty$. Then $\ell_p$ properly couples with $\ww$. In fact, if
\[
c=\inf_n\frac{n^{1/p}-(n-1)^{1/p}}{w_n}
\]
and $\tb=(1)_{m=1}^\infty$, then $(\tb,c)$ is congruent with $(\ww,\ell_p)$. Moreover,
\[
\dist_m(d_1(\ww),\ell_p[\ww])\approx \enpar{\log(1+m)}^{1-1/p}, \quad m\in\NN.
\]
\end{proposition}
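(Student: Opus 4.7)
The plan is to reduce the congruence inequality \eqref{eq:KPA} with $\tb=(1)_{m=1}^\infty$ to a one-variable inequality for the $\ell_p$ norm, then to derive the distortion estimate from Proposition~\ref{lem:ML} after identifying $\ell_p[\ww]$ and $d_p(\ww)$ as equivalent renormings of $\ell_p$.

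First I fix $m\in\NN_0$ and $(a_n)_{n=1}^m\in\DD^m$, and set $x=\sum_{n=1}^m \abs{a_n}^p\in[0,m]$. With $t_m=t_{m+1}=1$, inequality \eqref{eq:KPA} becomes $x^{1/p}+c\,w_{m+1}\le (1+x)^{1/p}$. For $p>1$, the function $\varphi(x):=(1+x)^{1/p}-x^{1/p}$ is strictly decreasing on $[0,\infty)$, so its minimum over $[0,m]$ is attained at $x=m$ and equals $(m+1)^{1/p}-m^{1/p}$. By the very definition of $c$, one has $c\,w_{m+1}\le (m+1)^{1/p}-m^{1/p}$, which closes the inequality. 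Positivity of $c$ follows from the mean-value theorem: $n^{1/p}-(n-1)^{1/p}\approx n^{1/p-1}\approx w_n$ uniformly in $n$.

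Next I would argue that the coupling is proper and extract the distortion estimate. Since $\prim[\ww]\approx(m^{1/p})_{m=1}^\infty$ matches the fundamental function of $\ell_p$, Lemma~\ref{lem:embedding}\ref{it:embedding:A} shows that $\ell_p[\ww]$ is just $\ell_p$ with an equivalent norm. On the other hand, $\norm{f}_{1,\ww}=\sum_n a_n^* w_n\approx\sum_n a_n^* n^{1/p-1}$ identifies $d_1(\ww)$, up to norm equivalence, with the Lorentz space $\ell_{p,1}$, which is strictly smaller than $\ell_p$ whenever $p>1$. Hence $\ell_p\neq d_1(\ww)$, so the coupling is proper. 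The direct computation
\[
\norm{f}_{p,\ww}^p=\sum_{n=1}^\infty (a_n^*)^p s_n^{p-1}w_n\approx\sum_{n=1}^\infty (a_n^*)^p,
\]
using $s_n\approx n^{1/p}$ and $w_n\approx n^{1/p-1}$, shows that $d_p(\ww)$ is likewise a renorming of $\ell_p$, so $d_p(\ww)$ and $\ell_p[\ww]$ are equivalent. Since $\prim[\ww]$ has the URP (as $1/p<1$), Proposition~\ref{lem:ML} yields $H_m[\ww]\approx\log(1+m)$ and
\[
\dist_m(d_1(\ww),\ell_p[\ww])\approx\dist_m(d_1(\ww),d_p(\ww))=\enpar{H_m[\ww]}^{1-1/p}\approx\enpar{\log(1+m)}^{1-1/p},
\]
closing the proof. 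The only delicate point is checking that the one-variable inequality attains its worst case at $x=m$, which is precisely what pins down the sharp constant $c$ stated in the proposition.
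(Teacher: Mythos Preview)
Your proof is correct and follows essentially the same route as the paper: the congruence inequality is reduced to the monotonicity of $x\mapsto (1+x)^{1/p}-x^{1/p}$ on $[0,m]$, and the distortion estimate is read off from Proposition~\ref{lem:ML} after identifying $\ell_p[\ww]=\ell_p=d_p(\ww)$. The only cosmetic difference is that the paper obtains the congruence by first applying Lemma~\ref{lem:GeneralCoupling} to the auxiliary weight $\uu_p$ with $\prim[\uu_p]=(m^{1/p})_{m=1}^\infty$ and then transferring to $\ww$ via the equivalence $\ww\approx\uu_p$, whereas you verify \eqref{eq:KPA} directly.
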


\begin{proof}
If $\uu_p$ is the weight with primitive sequence $(m^{1/p})_{m=1}^\infty$ then, by Lemma~\ref{lem:GeneralCoupling}, $\ell_p$ couples with $\uu_p$, and $(\tb,1)$ is congruent con $(\uu_p,\ell_p)$. Taking into account that $\ww$ is equivalent to $\uu_p$, we infer that $\ell_p$ couples with $\ww$, and $(\tb,c)$ is congruent with $(\ww,\ell_p)$. Since $\ell_p[\ww]=\ell_p=d_p(\ww)$, applying Proposition~\ref{lem:ML} puts an end to the proof.
\end{proof}

\section{The construction}\label{sect:main}\noindent
Let $\It$ denote the set of all finite integer intervals contained in $\NN$. Let $\Bt$ be the set of all pairs $(B,I)$, where $B$ is a finite subset of $\NN$ and $I\in\It$ with $\abs{B}<I$, that is, $I=\emptyset$ or $\abs{B}<\min(I)$. Given a weight $\ww=(w_n)_{n=1}^\infty$ and $(B,I)\in\Bt$ we consider the functional
\[
T_{\ww,I,B}=\sum_{n\in I \setminus B} w_j\, \ee_j^*.
\]
If $\ww$ is positive, the family $\{T_{\ww,I,B} \colon (B,I)\in\Bt\}$ separates the vectors of $\FF^\NN$, therefore
\[
b_{\ww}:=\enbrace{f\in\FF^\NN \colon \norm{f}_{b_{\ww}}:=\sup_{(B,I)\in\Bt} \abs{T_{\ww,I,B}(f)}<\infty}
\]
is a Banach space. The wished-for conditional bases with Property~(A) will be the unit vector systems of the Banach spaces
\[
\BB[\ww,\Sym]:=\Sym[\ww] \cap b_{\ww}
\]
associated with non-increasing weights $\ww$ and symmetric Banach spaces $\Sym$ that properly couple with $\ww$. Since Property~(A) is isometric in nature, to properly state this result we need to choose a suitable norm for $\BB[\ww,\Sym]$. Specifically, we choose a constant $c$ and a non-increasing sequence $\tb=(t_m)_{m=1}^\infty$ such that $(\tb,c)$ is congruent with $(\ww,\Sym)$. Then we define
\[
\norm{f}_{\ww,\Sym} =\sup_{(B,I)\in\Bt} t_{\abs{B}} \norm{S_B(f)}_{\Sym} + \frac{c}{2} \abs{T_{\ww,I,B}(f)}.
\]
To avoid cumbrous notations we are omitting the dependence of $\norm{\cdot}_{\ww,\Sym}$ on $\tb$ and $c$.

To state the features of the unit vector system of the spaces $\BB[\ww,\Sym]$, we recall some additional terminology related to the TGA.

A basis $\XB$ is said to be \emph{bidemocratic} if there is a constant $C\in(0,\infty)$ such that
\begin{equation}\label{eq:Bid}
\usdf_m[\XB,\XX] \, \usdf_m[\XB^*,\XX^*] \le Cm,\quad m\in\NN.
\end{equation}
This constant $C$ cannot be smaller than one. If \eqref{eq:Bid} holds with $C=1$ we say that $\XB$ is \emph{isometrically bidemocratic}. If this is the case then both $\XB$ and $\XB^*$ are isometrically superdemocratic, that is,
\[
\norm{\Ind_{\varepsilon,A}[\XB,\XX]}= \usdf_m[\XB,\XX], \quad \norm{\Ind_{\varepsilon,A}[\XB^*,\XX^*]}=\usdf_m[\XB^*,\XX^*]
\]
for all $A\subset\NN$ with $\abs{A}=m$ and all $\varepsilon\in\EE^A$. Notice that, by Proposition~\ref{prop:dualsymmetric}, the unit vector system of any symmetric Banach space is isometrically bidemocratic.

Let $S_A=S_A[\XB,\XX]$ be the coordinate projection onto the linear span of the elements from the basis $\XB$ indexed from the finite set $A\subset\NN$. To quantify how far $\XB$ is from being unconditional we use the sequence $(\unc_m)_{m=1}^\infty$ of unconditionality parameters given by
\[
\unc_m=\unc_m[\XB,\XX]=\sup_{\abs{A}\le m} \norm{S_A[\XB,\XX]}, \quad m\in\NN.
\]
If $\XB$ is a quasi-greedy basis then we know (\cite{DKK2003}*{Lemma 8.2}) that
\[\unc_m[\XB,\XX]\lesssim \log(1+m),\quad m\in\NN.\] We will also use another sequence of unconditionality parameters defined for each $m\in\NN$ by
\[
\uunc_m=\uunc_m[\XB,\XX]=\sup\enbrace{ \norm{S_A(f)} \colon A\subset\NN,\, \norm{f} \le 1, \, \supp(f)\subset[1,m] },
\]
(see \cites{GW2014, AAW2019}).
Note that, for all $m\in\NN$, $\uunc_m\le \unc_m$, and
\begin{equation}\label{eq:UCEquiv}
\sup\{ \Vert f-2 S_A(f) \Vert \colon A\subset\NN,\, \Vert f \Vert \le 1, \, \supp(f)\subset[1,m]\} \le 2 \uunc_m.
\end{equation}

A basis $\XB$ is said to be \emph{bimonotone} if $\norm{S_I[\XB,\XX]}\le 1$ for every integer interval $I\subset \NN$.
Similarly, a map $\norm{\cdot} \colon \FF^\NN\to[0,\infty]$ is said to be bimonotone if $\norm{S_I(f)}\le \norm{f}$ for for every integer interval $I$ and every $f\in\FF^\NN$.

We will use several times the estimate provided by the following lemma.
\begin{lemma}\label{lem:bwdoow}
Let $\ww$ be a non-increasing weight and $(B,I)\in\Bt$. Then,
\begin{equation*}
\abs{T_{\ww,I,B}(f)} \le \abs{T_{\ww,I,\emptyset}(f)} + \norm{f}_{\infty,\ww}, \quad f\in\FF^\NN.
\end{equation*}
\end{lemma}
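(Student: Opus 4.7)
The plan is to reduce, via the disjoint decomposition $I = (I \setminus B) \sqcup (I \cap B)$, to estimating a ``pure'' functional. Since $T_{\ww, \cdot, \emptyset}$ is additive in its middle argument,
\[
T_{\ww, I, \emptyset}(f) = T_{\ww, I, B}(f) + T_{\ww, I \cap B, \emptyset}(f),
\]
and the triangle inequality yields $\abs{T_{\ww, I, B}(f)} \le \abs{T_{\ww, I, \emptyset}(f)} + \abs{T_{\ww, I \cap B, \emptyset}(f)}$. Hence it suffices to prove $\abs{T_{\ww, C, \emptyset}(f)} \le \norm{f}_{\infty, \ww}$ for $C := I \cap B$.

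The crucial structural input is $\abs{C} \le \abs{B} < \min(I) \le \min(C)$, so setting $m := \abs{C}$ and enumerating $C = \{j_1 < \dots < j_m\}$ yields $j_k \ge \abs{B} + k \ge m + k$. By the non-increasingness of $\ww$ the weights at $C$ satisfy $w_{j_k} \le w_{\abs{B}+k}$ and are therefore bounded by the tail of $\ww$; this is the only place where both the monotonicity of $\ww$ and the relation $\abs{B} < \min(I)$ really bite.

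Next, I would bound $\abs{T_{\ww, C, \emptyset}(f)} \le \sum_{k=1}^m w_{j_k}\abs{a_{j_k}}$ and apply the rearrangement inequality, using that $(w_{j_k})_k$ is non-increasing, to get $\sum_k w_{j_k}\abs{a_{j_k}} \le \sum_k w_{j_k} c_k$, where $(c_k)_k$ is the decreasing rearrangement of $(\abs{a_{j_l}})_l$ and satisfies $c_k \le (f^*)_k$. This yields
\[
\abs{T_{\ww, C, \emptyset}(f)} \le \sum_{k=1}^m w_{\abs{B}+k}\, (f^*)_k.
\]

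The final step, which I expect to be the main obstacle, is to bound the right-hand side by $\sup_k s_k (f^*)_k = \norm{f}_{\infty, \ww}$. The plan is a summation-by-parts argument: using $(f^*)_k \le \norm{f}_{\infty, \ww}/s_k$ together with the estimate $s_{\abs{B}+k} - s_{\abs{B}} \le s_k$ (itself a consequence of $\ww$ being non-increasing), one should telescope to obtain the desired bound. The subtlety is that the offset $\abs{B}$ provided by the condition $\abs{B} < \min(I)$ is essential to avoid a logarithmic loss: without it, the analogous sum $\sum_k w_k (f^*)_k$ would only produce the much larger $d_1(\ww)$-norm $\norm{f}_{1,\ww}$, so the offset must be tracked carefully in the partial-summation bookkeeping.
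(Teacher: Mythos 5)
Your first step (the decomposition $T_{\ww,I,\emptyset}=T_{\ww,I,B}+T_{\ww,I\cap B,\emptyset}$ and the triangle inequality) is exactly the paper's, and your rearrangement argument bounding $\sum_{n\in I\cap B}\abs{a_n}w_n$ by $\sum_{k=1}^r w_{m+k}(f^*)_k$, where $m=\abs{B}$ and $r=\abs{I\cap B}\le m$, is correct. The gap is the final step: the inequality $\sum_{k=1}^r w_{m+k}(f^*)_k\le\norm{f}_{\infty,\ww}$ that you are aiming for is false, and the summation by parts cannot produce it. Substituting $(f^*)_k\le\norm{f}_{\infty,\ww}/s_k$ reduces it to $\sum_{k=1}^r w_{m+k}/s_k\le 1$, and Abel summation with $s_{m+k}-s_m\le s_k$ yields only $\sum_{k=1}^r w_{m+k}/s_k\le \sum_{k=1}^r w_k/s_k=H_r[\ww]$; this is sharp, since for the constant weight $\ww=(1,1,\dots)$ the left-hand side equals the harmonic number $\sum_{k=1}^r 1/k$. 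The offset $m$ gains nothing for slowly varying weights, where $w_{m+k}\approx w_k$.

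This logarithmic loss is not a defect of your bookkeeping: once you pair by rank you discard the positions of the large coordinates, and the resulting statement is genuinely false --- indeed the lemma itself fails for arbitrary $f\in\FF^\NN$. Take $\ww=(1,1,\dots)$, $I=[m+1,3m]$, $B=\{m+1,m+3,\dots,3m-1\}$, and $a_{m+j}=(-1)^{j+1}/j$ for $1\le j\le 2m$ (zero otherwise); then $\norm{f}_{\infty,\ww}=1$ and $\abs{T_{\ww,I,\emptyset}(f)}\le 1$, while $\abs{T_{\ww,I,B}(f)}=\tfrac12\sum_{k=1}^m 1/k\to\infty$. The paper's proof instead keeps the positions: it asserts $\abs{a_n}\le\norm{f}_{\infty,\ww}/s_n$ for each individual $n\in I\cap B$ and concludes with $\sum_{n\in I\cap B}w_n/s_n\le\sum_{n=m+1}^{2m}w_n/s_n\le(s_{2m}-s_m)/s_m\le 1$, using that $n\mapsto w_n/s_n$ is non-increasing and that $I\cap B$ consists of at most $m$ indices, all larger than $m$. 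That coordinatewise bound holds precisely when $s_n\abs{a_n}\le\norm{f}_{\infty,\ww}$ for every $n$ --- for instance when $\abs{f}$ is non-increasing, which is the case in every application of the lemma in the paper --- but not for general $f$, as the example shows. So to complete a proof you must import such a hypothesis; under it the paper's three-line position-keeping argument closes, whereas the rank-pairing route still loses the factor $H_m[\ww]$.
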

\begin{proof}
Set $f=(a_n)_{n=1}^\infty$ and $m=\abs{B}$. We have
\[
\abs{T_{\ww,I,B}(f)}\le \abs{T_{\ww,I,\emptyset}(f)} +R,
\]
where
\[
R=\abs{\sum_{n\in I\cap B} a_n \, w_n}\le \norm{f}_{\infty,\ww} \sum_{n\in I\cap B} \frac{w_n}{s_n}
\le \norm{f}_{\infty,\ww} \sum_{n=m+1}^{2m} \frac{w_n}{s_n}\le \norm{f}_{\infty,\ww}.\qedhere
\]
\end{proof}

We will also appeal a couple of times to the following observation.
\begin{lemma}\label{lem:new1}
If $I\in\It$ and $(B,J)\in \Bt$ then $(B,I\cap J)\in \Bt$, and
\begin{equation*}
T_{\ww, J,B} \circ S_I= T_{\ww, I\cap J,B}.
\end{equation*}
\end{lemma}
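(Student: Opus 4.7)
The plan is to verify the two assertions by direct unfolding of the definitions; no machinery from earlier in the paper is needed. First I would check membership in $\Bt$. Since $I$ and $J$ are finite integer intervals, so is $I\cap J$ (possibly empty), hence $I\cap J\in\It$. For the size condition, I split on whether $I\cap J$ is empty: if it is, then $(B,I\cap J)\in\Bt$ vacuously. Otherwise $\min(I\cap J)\ge \min(J)>\abs{B}$, where the last inequality uses $(B,J)\in\Bt$ together with $J\ne\emptyset$. Hence $(B,I\cap J)\in\Bt$.

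Next I would verify the operator identity by evaluating both sides on an arbitrary $f=(a_n)_{n=1}^\infty\in\FF^\NN$. By the definition of the canonical coordinate projection, $\ee_n^*(S_I(f))=a_n$ when $n\in I$ and $0$ otherwise. Therefore
\[
T_{\ww,J,B}(S_I(f)) = \sum_{n\in J\setminus B} w_n\, \ee_n^*(S_I(f)) = \sum_{n\in (J\setminus B)\cap I} w_n\, a_n.
\]
Since $(J\setminus B)\cap I=(I\cap J)\setminus B$, the right-hand side equals $T_{\ww,I\cap J,B}(f)$, which closes the argument.

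There is no genuine obstacle here: the content of the lemma is purely a bookkeeping observation about restricting the summation index of the functional $T_{\ww,J,B}$ to the window $I$. The only mild subtlety is handling the empty-interval case in the $\Bt$-membership verification, which is why I would dispose of it explicitly at the outset.
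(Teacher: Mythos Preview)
Your proof is correct and is exactly the intended elementary verification; the paper states this lemma as an observation without proof, and your argument is precisely the direct unfolding of the definitions one would supply.
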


As we will show, $c_{00}$ is not dense in $\BB[\ww]$. Thus we denote by $\BB_0[\ww]$ the separable part of $\BB[\ww]$.

\begin{theorem}\label{thm:main}
Suppose that a symmetric Banach space $\Sym$ couples with a non-increasing non-summable weight $\ww$, and pick a pair $((t_n)_{n=1}^\infty, c)$ congruent with $(\ww,\Sym)$. Then the unit vector system $\EB$ of the Banach space $\BB:=\BB_0[\ww,\Sym]$ is a Schauder basis with Property~(A). In addition, if $\FuFu[\Sym]=(\Lambda_m)_{m=1}^\infty$ and $d=\sup_m \Lambda_m t_m/s_m$,
\begin{enumerate}[label=(\roman*),leftmargin=*, widest=iii]
\item\label{it:PA:0} The norm $\norm{\cdot}_{\ww,\Sym}$ is bimonotone, so that $\EB$ is a bimonotone basis of $\BB$,
\item\label{it:PA:1} $\EB$ is isometrically bidemocratic,
\item\label{it:PA:2} $\usdf[\EB,\BB] =(t_m\Lambda_m)_{m=1}^\infty\approx \prim[\ww]$,
\item\label{it:PA:3} $\unc_m[\EB,\BB] \lesssim \dist_m(d_1(\ww),\Sym[\ww])$ for $m\in\NN$, and
\item $\uunc_m[\EB,\BB] \ge 4^{-1} (c+d)^{-1} c \, \dist_m(d_1(\ww),\Sym[\ww])$ for $m\in\NN$.

\end{enumerate}
If $\Sym$ properly couples with $\ww$, $\EB$ is a conditional basis of $\BB$.
\end{theorem}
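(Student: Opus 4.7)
The plan is to verify each item of the theorem in sequence, hinging on one technical claim which I shall call the \emph{truncation lemma}: for every $f\in\BB$ with maximum-modulus set $D=D_f$ and every finite $B\subset\NN$ one has $\psi_B(f)\le\psi_{B\cup D}(f)$, where
\[
\psi_B(f):=t_{\abs{B}}\norm{S_B f}_\Sym+\tfrac{c}{2}\phi_B(f),\qquad \phi_B(f):=\sup_{I\in\It,\,\abs{B}<\min I}\abs{T_{\ww,I,B}(f)}.
\]
Once this is in place, the supremum $\norm{f}_{\ww,\Sym}=\sup_B\psi_B(f)$ is effectively taken over $B\supset D$; on such $B$ both $\norm{S_B f}_\Sym$ (which, by the rearrangement-invariance of $\Sym$, depends only on the multiset $\{\abs{f_n}:n\in B\}$) and $T_{\ww,I,B}(f)$ (which depends only on the values of $f$ off $D$, since $I\setminus B\subset\NN\setminus D$) coincide on $x$ and on any greedy rearrangement $y$ of $x$. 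Property~(A) then drops out.

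For bimonotonicity~(i) and the Schauder-basis conclusion, the key observation is that if $(B,I)\in\Bt$ and $J$ is any integer interval then $(B\cap J,\,I\cap J)\in\Bt$, since $\abs{B\cap J}\le\abs{B}<\min I\le\min(I\cap J)$; combining Lemma~\ref{lem:new1} with the monotonicity of $\tb$ gives $\psi_B(S_J f)\le\psi_{B\cap J}(f)$, hence $\norm{S_J f}_{\ww,\Sym}\le\norm{f}_{\ww,\Sym}$, and the standard density argument using $\BB_0=\overline{c_{00}}$ promotes $\EB$ to a bimonotone Schauder basis. For the fundamental function~(iii), the truncation lemma applied to $\Ind_{\varepsilon,A}$ (for which $D=A$) restricts the sup to $B\supset A$; on such $B$, $\phi_B(\Ind_{\varepsilon,A})=0$ and $\norm{S_B\Ind_{\varepsilon,A}}_\Sym=\Lambda_{\abs{A}}$, and since $(t_m\Lambda_m)_m$ is non-decreasing by the $(a_n)\equiv1$ instance of \eqref{eq:KPA}, the sup equals $t_{\abs{A}}\Lambda_{\abs{A}}$. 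The isometric bidemocracy in~(ii) will follow by the analogous computation applied to the dual basis in $\BB^*$, together with Proposition~\ref{prop:dualsymmetric}\ref{it:sym:b}.

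I expect the proof of the truncation lemma itself to be the main obstacle. It splits into two matched estimates. Writing $k:=\abs{D\setminus B}$ and iterating \eqref{eq:KPA} $k$ times --- adding one index of $D\setminus B$ to $B$ at a time, using the rearrangement invariance of $\norm{\cdot}_\Sym$ to reduce to the canonical configuration of \eqref{eq:KPA} and scaling by $c_0$ --- yields
\[
t_{\abs{B}+k}\norm{S_{B\cup D} f}_\Sym-t_{\abs{B}}\norm{S_B f}_\Sym\ge c_0\,c\,\bigl(s_{\abs{B}+k}-s_{\abs{B}}\bigr).
\]
For the $\phi$-term I would split every admissible interval $I$ for $B$ as $I_1:=I\cap(\abs{B},\abs{B}+k]$ and $I_2:=I\cap(\abs{B}+k,\infty)$; the piece $I_1$ contributes $\abs{T_{\ww,I_1,B}(f)}\le c_0(s_{\abs{B}+k}-s_{\abs{B}})$ trivially, while $I_2$ is admissible for $B\cup D$ and satisfies $\abs{T_{\ww,I_2,B}(f)}\le\phi_{B\cup D}(f)+c_0\sum_{n\in(D\setminus B)\cap I_2}w_n$, the last sum being $\le k\,w_{\abs{B}+k}\le s_{\abs{B}+k}-s_{\abs{B}}$ because $\ww$ is non-increasing and $\abs{D\setminus B}=k$. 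These combine to $\phi_B(f)-\phi_{B\cup D}(f)\le 2c_0(s_{\abs{B}+k}-s_{\abs{B}})$, which multiplied by $c/2$ matches the $\Sym$-term estimate; the factor-of-$2$ bookkeeping is the delicate point.

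For the unconditionality items~(iv) and~(v) I would use Lemma~\ref{lem:embedding} to sandwich $d_1(\ww)\subset\BB\subset d_\infty(\ww)$. The upper bound~(iv) follows by chaining $\norm{S_A f}_\BB\lesssim\norm{S_A f}_{d_1(\ww)}\le\norm{f}_{d_1(\ww)}\le\dist_m(d_1(\ww),\Sym[\ww])\,\norm{f}_\BB$, using the $1$-suppression unconditionality of the unit vector basis of $d_1(\ww)$ and the fact that $\norm{\cdot}_{\Sym[\ww]}$ is dominated by $\norm{\cdot}_\BB$. For~(v) I would exhibit a test vector $f_m$ supported in $[1,m]$ and a subset $A\subset[1,m]$ for which $\norm{f_m-2S_A f_m}_\BB$ reaches the order of $\norm{f_m}_{d_1(\ww)}$ while $\norm{f_m}_\BB\approx\norm{f_m}_{\Sym[\ww]}$; the constant $d=\sup_m\Lambda_m t_m/s_m$ tracks this ratio, and \eqref{eq:UCEquiv} converts the estimate into the claimed lower bound on $\uunc_m$. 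Conditionality when $\Sym$ properly couples with $\ww$ is then immediate from~(v): Lemma~\ref{lem:Swnotd1w} gives $d_1(\ww)\subsetneq\Sym[\ww]$, so the norms on $c_{00}$ are not equivalent, forcing $\dist_m(d_1(\ww),\Sym[\ww])\to\infty$ and hence $\uunc_m\to\infty$.
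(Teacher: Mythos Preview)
Your proposal is correct and follows essentially the same route as the paper: the key technical step (your ``truncation lemma'') is exactly the unnumbered lemma stated just before the proof of Theorem~\ref{thm:main}, and the remaining items are handled with the same embeddings, the comparison $\Nnorm{\cdot}_{\Sym[\ww]}\le\norm{\cdot}_{\ww,\Sym}$ together with Proposition~\ref{prop:dualsymmetric}\ref{it:sym:b}, and the alternating-sign test vector for the lower bound on $\uunc_m$. The only organizational difference is that the paper proves the truncation lemma one index at a time---replacing $(B,I)$ by $(B\cup\{k\},\,I\setminus\{\abs{B}+1\})$ and invoking \eqref{eq:KPA} once per step, which makes the factor-of-$2$ bookkeeping immediate---rather than via your bulk splitting $I=I_1\cup I_2$, but the two arguments are equivalent.
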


Before tackling the proof of Theorem~\ref{thm:main} we single out a property of the norm on $\BB[\ww,\Sym]$ that will be crucial in showing that the unit vector system of $\BB_0[\ww,\Sym]$ has Property~(A).

\begin{lemma}
Given $f=(a_n)_{n=1}^\infty\in \FF^\NN$ with $\abs{a_n} \le 1$ for all $n\in\NN$, and $A\subset\NN$ finite with $\abs{a_n}=1$ for all $n\in A$,
\[
\norm{f}_{\ww,\Sym}
=\sup_{\substack{(B,I)\in\Bt\\ A\subset B\subset \supp(f)}} t_{\abs{B}} \norm{S_{B}(f)}_{\Sym} + \frac{c}{2} \abs{T_{\ww,I,B}(f)}.
\]
\end{lemma}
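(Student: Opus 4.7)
The inequality $\ge$ is immediate, since the supremum on the right is taken over a subset of $\Bt$. For the reverse inequality, given any $(B,I)\in\Bt$, I will construct a pair $(B^*,I^*)\in\Bt$ with $A\subset B^*\subset\supp(f)$ satisfying $\Phi(B^*,I^*)\ge\Phi(B,I)$, where $\Phi(B,I):=t_{\abs{B}}\norm{S_B(f)}_\Sym+\frac{c}{2}\abs{T_{\ww,I,B}(f)}$.

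First I reduce to the case $B\subset\supp(f)$: replacing $B$ by $B_1:=B\cap\supp(f)$ keeps $(B_1,I)\in\Bt$ (as $\abs{B_1}\le\abs{B}$), and since $a_n=0$ for $n\notin\supp(f)$, both $S_{B_1}(f)=S_B(f)$ and $T_{\ww,I,B_1}(f)=T_{\ww,I,B}(f)$; as $\tb$ is non-increasing, $\Phi(B_1,I)\ge\Phi(B,I)$. I then set $B^*:=B_1\cup A\subset\supp(f)$ and $I^*:=I\cap[\abs{B^*}+1,\infty)$, so that $I^*$ is an integer interval with $\abs{B^*}<\min(I^*)$ whenever $I^*\ne\emptyset$, and therefore $(B^*,I^*)\in\Bt$. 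Write $j:=\abs{A\setminus B_1}$.

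For the first term, the rearrangement and unimodular invariance of $\Sym$ extend the coupling inequality \eqref{eq:KPA} to the statement: whenever $k\notin B$, $\abs{a_k}=1$, and the coefficients of $S_B(f)$ lie in $\DD$, one has $t_{\abs{B}}\norm{S_B(f)}_\Sym+cw_{\abs{B}+1}\le t_{\abs{B}+1}\norm{S_{B\cup\{k\}}(f)}_\Sym$. Applying this once per element of $A\setminus B_1$ gives
\[
t_{\abs{B^*}}\norm{S_{B^*}(f)}_\Sym-t_{\abs{B_1}}\norm{S_{B_1}(f)}_\Sym\ge c\sum_{i=1}^{j} w_{\abs{B_1}+i}.
\]
For the second term, since $B_1\subset B^*$ and $I^*\subset I$, the set $G:=(I\setminus B_1)\setminus(I^*\setminus B^*)$ is contained in $E\cup F$, where $E:=I\cap[\min(I),\abs{B^*}]\setminus B_1$ has at most $j$ elements, all in $[\abs{B_1}+1,\abs{B_1}+j]$, and $F:=I\cap(A\setminus B_1)$ has at most $j$ elements, all of index $\ge\abs{B_1}+1$. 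Since $\ww$ is non-increasing, each of $\sum_{n\in E}w_n$ and $\sum_{n\in F}w_n$ is bounded by $\sum_{i=1}^j w_{\abs{B_1}+i}$, and the reverse triangle inequality together with $\abs{a_n}\le 1$ yield
\[
\bigl|\abs{T_{\ww,I,B_1}(f)}-\abs{T_{\ww,I^*,B^*}(f)}\bigr|\le 2\sum_{i=1}^{j}w_{\abs{B_1}+i}.
\]
Multiplying by $c/2$, the loss in the second term is at most $c\sum_{i=1}^{j}w_{\abs{B_1}+i}$, exactly matching the gain in the first term. Hence $\Phi(B^*,I^*)\ge\Phi(B_1,I)\ge\Phi(B,I)$.

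The delicate point is the combinatorial accounting of $G$: enlarging $B$ to contain $A$ requires a left-truncation of $I$ to preserve $\abs{B^*}<\min(I^*)$, and the indices lost from $I\setminus B_1$ split into two ``reserves'' of size at most $j$ each (one from the truncation of $I$, one from $A\cap I$). The factor $\frac{c}{2}$ (rather than $c$) in the definition of $\norm{\cdot}_{\ww,\Sym}$ is precisely what allows this combined loss to be absorbed by the coupling gain.
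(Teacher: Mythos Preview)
Your proof is correct and follows essentially the same strategy as the paper's: reduce to $B\subset\supp(f)$ using the monotonicity of $\tb$, then enlarge $B$ to contain $A$ while truncating $I$, absorbing the loss in the $T$-term via the coupling inequality~\eqref{eq:KPA}. The only cosmetic difference is that the paper adds one element $k\in A\setminus B$ at a time (setting $D=B\cup\{k\}$ and $J=I\setminus\{m+1\}$, so the symmetric difference $(J\setminus D)\triangle(I\setminus B)\subset\{m+1,k\}\cap[m+1,\infty)$ has at most two elements and contributes at most $2w_{m+1}$), whereas you adjoin all of $A\setminus B_1$ in a single step and track the accumulated discrepancy via your sets $E$ and $F$; the per-step and batch bookkeeping are equivalent.
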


\begin{proof}
If $A=\emptyset$ we pick $(B,I)\in \Bt$ and set $D=B\cap \supp(f) $. We have $(D,I)\in\Bt$, and
\begin{align*}
t_{\abs{B}} \norm{S_{B}(f)}_{\Sym} +\frac{c}{2} \abs{T_{\ww,I,B}(f)}
&= t_{\abs{B}} \norm{S_{D}(f)}_{\Sym} +\frac{c}{2} \abs{T_{\ww,I,D}(f)}\\
&\le t_{\abs{D}} \norm{S_{D}(f)}_{\Sym} +\frac{c}{2} \abs{T_{\ww,I,D}(f)}.
\end{align*}
Now, to show the result when $A$ is nonempty, we pick $(B,I)\in \Bt$ with $B\subset\supp(f)$, and $k\in A\setminus B$. Set $D=B\cup\{k\}$, $m=\abs{B}$ and $J=I\setminus \{m+1\}$. We have $(D,J)\in\Bt$ and
\[
E:=(J\setminus D)\triangle (I\setminus B) \subset \enbrace{m+1,k}\cap[m+1,\infty),
\]
therefore
\[
\abs{T_{\ww,I,B}(f) -T_{\ww,J,D}(f)}\le \sum_{n\in E} \abs{a_n} \, w_n\le 2 w_{m+1}.
\]
Hence,
\begin{align*}
t_{\abs{B}} \norm{S_{B}(f)}_{\Sym} + \frac{c}{2} \abs{T_{\ww,I,B}(f)}
&\le t_{\abs{B}} \norm{S_{B}(f)}_{\Sym} + c w_{m+1}+ \frac{c}{2} \abs{T_{\ww,J,D}(f)}\\
&\le t_{\abs{D}} \norm{S_{D}(f)}_{\Sym} + \frac{c}{2} \abs{T_{\ww,J,D}(f)},
\end{align*}
and the proof is over.
\end{proof}

\begin{proof}[Proof of Theorem~\ref{thm:main}]
\ref{it:PA:0} is a ready consequence of Lemma~\ref{lem:new1}.
To see that $\EB$ has Property~(A), we pick $m\in\NN$, $A\subset\NN$ with $\abs{A}=m$, $\varepsilon\in\EE^A$, and $f\in\FF^\NN$ with $\norm{f}_\infty\le 1$ and $\supp(f)\subset\NN\setminus A$. We have
\begin{align*}
\norm{\Ind_{\varepsilon,A}+f }_{\ww,\Sym}
=&\Nr(A,\varepsilon,f)\\
:=&\sup_{\substack{D\subset \supp(f)\\ m + \abs{D} <\min(I) }} t_{m + \abs{D} } \norm{\Ind_{\varepsilon,A}+S_D(f)}_{\Sym}+ \frac{c}{2}T_{\ww,I,D}(f).
\end{align*}
By symmetry (or, more specifically, by Property~(A)), $\Nr(A,\varepsilon,f)$ only depends on $A$ and $\varepsilon$ through $m$. Hence, the unit vector system has Property~(A). In the particular case that $f=0$ we obtain
\[
\norm{\Ind_{\varepsilon,A}}_{\ww,\Sym}= t_{\abs{A}} \norm{\Ind_{\varepsilon,A}}_{\Sym}= t_m \Lambda_m.
\]

Consider $\Sym[\ww]$ equipped with the equivalent norm
\[
\Nnorm{f}_{\Sym[\ww]}
= \sup_{\abs{B}<\infty} t_{\abs{B}}\norm{S_B(f)}_\Sym, \quad f\in\FF^\NN,
\]
so that $\Nnorm{\cdot}_{\Sym[\ww]}\le \norm{\cdot}_{\ww,\Sym}$. By Proposition~\ref{prop:dualsymmetric},
\[
\norm{\Ind^*_{\varepsilon,A}}_{\BB^*} \le \Nnorm{\Ind^*_{\varepsilon,A}}_{(\Sym[\ww])^*}=\frac{\abs{A}}{\Nnorm{\Ind_{\varepsilon,A}}}_{\Sym[\ww]}=\frac{m}{t_m \Lambda_m},
\]
from where we infer that \ref{it:PA:1} and \ref{it:PA:2} hold.

Since $d_1(\ww)\subset b_{\ww}$, by Lemma~\ref{lem:Swnotd1w},
\[
d_1(\ww) \subset \BB[\ww,\Sym] \subset \Sym[\ww].
\]
Therefore, by \cite{AAB2021}*{Lemma 6.1}, \ref{it:PA:3} holds. To estimate from below the unconditionality constants we compare the norm in $\BB[\ww,\Sym]$ of a non-increasing nonnegative sequence $f=(a_n)_{n=1}^\infty\in c_{00}$ with that of $g=((-1)^n a_n)_{n=1}^\infty$. Since $(\emptyset,I)\in\Bt$ for every $I\in \It$,
\[
\norm{f}_{\ww,\Sym}\ge \frac{c}{2} \sup_{I\in \It} \abs{T_{\ww,\emptyset,I}(f)}
= \frac{c}{2} \sum_{n=1}^\infty a_n w_n =\frac{c}{2} \norm{f}_{1,\ww}.
\]
Pick $(B,I)\in\Bt$ with $I\not=\emptyset$, and set $\abs{B}=m$ and $\min(I)=k$. Using Lemma~\ref{lem:bwdoow} we obtain
\[
\abs{T_{\ww,I,B}(g) } \le a_k w_k + \norm{g}_{\infty,\ww}
\le a_k s_k+ \norm{g}_{\infty,\ww}
\le 2 \norm{f}_{\infty,\ww}.
\]
Consequently, by Lemma~\ref{lem:embedding}, $\norm{g}_{\ww,\Sym} \le (d+c ) \norm{f}_{\Sym[\ww]}$. By \eqref{eq:UCEquiv},
\[
\uunc_m[\EB,\BB] \ge \frac{c}{4(c+d)} \dist_m(d_1(\ww), \Sym[\ww]), \quad m\in\NN.
\]
Appealing to the `moreover' part of Lemma~\ref{lem:Swnotd1w}, the proof is over.
\end{proof}

The combination of Theorem~\ref{thm:main} with Lemma~\ref{lem:GeneralCoupling}, Proposition~\ref{lem:L1coupleswith} or Proposition~\ref{lemma:lp} provides an adaptable method for constructing Banach spaces with conditional bases having Property~(A). Let us illustrate our construction with a pair of such Banach spaces.

\begin{example}\label{example:lpv1}
Let $1<p<\infty$. For $f=(a_n)_{n=1}^\infty\in\FF^\NN$ define
\[
\norm{f}=\sup_{ \substack{ B\subset \NN\\ \abs{B}<\infty }} \enpar{\abs{B}^{1/p-1} \sum_{n\in B} \abs{a_n} + \sup_{\substack{I \in\It \\ I>\abs{B} }} \abs{\sum_{n\in I\setminus B} a_n \frac{n^{1/p}-(n-1)^{1/p}}{2}}}.
\]
Let $\XX$ be the subspace of $\{f\in\FF^\NN \colon \norm{f}<\infty\}$ spanned by $c_{00}$. By Theorem~\ref{thm:main} and Proposition~\ref{lem:L1coupleswith}, the unit vector system $\EB$ is a bimonotone basis of $\XX$. Besides, $\EB$ has Property~(A), is isometrically bidemocratic with $\usdf_m[\EB,\XX]=m^{1/p}$ for all $m\in\NN$, and conditional with $\unc_m[\EB,\XX]\approx \uunc_m[\EB,\XX]\approx \log(1+m)$ for $m\in\NN$.
\end{example}

\begin{example}
Let $1<p<\infty$. Define for $f=(a_n)_{n=1}^\infty\in\FF^\NN$
\[
\norm{f}=\sup_{ \substack{ B\subset \NN\\ \abs{B}<\infty }} \enpar{\enpar{ \sum_{n\in B} \abs{a_n}^p}^{1/p} + \frac{1}{2p}\sup_{\substack{I \in\It \\ I>\abs{B}}}
\abs{\sum_{n\in I\setminus B} a_n n^{1/p-1}}}.
\]
Let $\XX$ be the subspace of $\{f\in\FF^\NN \colon \norm{f}<\infty\}$ spanned by $c_{00}$. Since
\[
\inf_n\frac{n^{1/p}-(n-1)^{1/p}}{n^{1/p-1}}=\lim_{x\to 0^+} \frac{1-(1-x)^{1/p}}{x}=\frac{1}{p},
\]
combining Theorem~\ref{thm:main} with Proposition~\ref{lemma:lp} gives that the unit vector system $\EB$ is a bimonotone basis of $\XX$. Besides, $\EB$ has Property~(A), is isometrically bidemocratic with $\usdf_m[\EB,\XX]\approx m^{1/p}$ for all $m\in\NN$, and conditional with $\unc_m[\EB,\XX] \approx \uunc_m[\EB,\XX]\approx \log^{1/p}(1+m)$ for $m\in\NN$.
\end{example}

More generally, we have the following result.

\begin{corollary}
There exist Banach spaces $\XX$ with a bidemocratic Schauder basis $\XB$ that fails to be unconditional and yet has Property~(A). In fact, if we let $\prim=(s_m)_{m=1}^\infty$ be an essentially increasing sequence of positive scalars whose dual sequence $\prim^*$ is also essentially increasing such that
\begin{enumerate}[label=(C.\arabic*)]
\item\label{it:cor:1} $\prim$ and $\prim^*$ are unbounded, and $\prim$ is equivalent to the primitive sequence of some non-increasing weight $\ww$,
\end{enumerate}
then we can choose $\XX$ and $\XB$ so that
\[
\usdf_m[\XB,\XX] =\prim[\ww]\approx s_m, \quad m\in\NN.
\]
In particular, such Banach spaces can be chosen if
\begin{enumerate}[label=(C.\arabic*),resume]
\item\label{it:cor:2} $\prim$ has the LRP and $\prim^*$ is unbounded.
\end{enumerate}
Finally, if
\begin{enumerate}[label=(C.\arabic*),resume]
\item\label{it:cor:3} $\prim$ has the LRP and the URP,
\end{enumerate}
then we can find $\XX$ and $\XB$ so that, in addition to the above properties,
\[
\unc_m[\XB,\XX] \approx \uunc_m[\XB,\XX] \approx \log(m+1),\quad m\in\NN.
\]
\end{corollary}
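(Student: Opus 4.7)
The plan is to realize the Banach spaces as $\XX:=\BB_0[\ww,\ell_1]$ with $\EB$ the unit vector system, choosing the weight $\ww$ to match $\prim$, and invoking Theorem~\ref{thm:main} with $\Sym=\ell_1$ (so that $\FuFu[\ell_1]=(m)_{m=1}^\infty$) via Proposition~\ref{lem:L1coupleswith}. The scheme is: handle C.1 directly, deduce C.2~$\Rightarrow$~C.1, and use URP to sharpen the estimates under C.3.

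Under C.1, I take the non-increasing weight $\ww$ supplied by the hypothesis. Unboundedness of $\prim$ forces $\ww$ to be non-summable, while essential monotonicity together with unboundedness of $\prim^*$ give $s_n/n\to 0$, so $\ww$ is null (a non-increasing weight is null if and only if $s_n/n\to 0$). Then Proposition~\ref{lem:L1coupleswith} certifies that $\ell_1$ properly couples with $\ww$, with $((s_m/m)_{m=1}^\infty,1)$ congruent to $(\ww,\ell_1)$, and Theorem~\ref{thm:main} delivers the unit vector system of $\BB_0[\ww,\ell_1]$ as a bimonotone, conditional, isometrically bidemocratic Schauder basis with Property~(A), satisfying $\usdf_m[\EB,\XX]=(s_m/m)\cdot m=s_m\approx s_m$.

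For C.2~$\Rightarrow$~C.1, the task is to manufacture the non-increasing weight required by C.1. Since $\prim^*$ is essentially increasing, $(s_n/n)$ is essentially non-increasing; let $\vv=(v_n)_{n=1}^\infty$ be a non-increasing sequence with $v_n\approx s_n/n$, and set $\ww:=\vv$, so that $\prim[\ww]_n\approx \sum_{k=1}^n s_k/k$. The lower bound $\sum_{k=1}^n s_k/k\gtrsim s_n$ is immediate from $s_k/k\gtrsim s_n/n$ for $k\le n$. The delicate step is the upper bound: iterating the LRP gives $s_k\lesssim s_n(k/n)^\alpha$ with $\alpha:=\log_r 2$, whence $\sum_{k=1}^n s_k/k\lesssim s_n n^{-\alpha}\sum_{k=1}^n k^{\alpha-1}\lesssim s_n$. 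Thus $\prim[\ww]\approx\prim$, and since LRP forces $\prim$ to be unbounded, all of C.1 holds.

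For C.3, LRP of $\prim$ gives URP of $\prim^*$, so $\prim^*$ is unbounded and C.2 (hence C.1) applies via the construction above. Since $\prim[\ww]\approx\prim$ inherits the URP, the last clause of Proposition~\ref{lem:L1coupleswith} yields $\dist_m(d_1(\ww),\ell_1[\ww])\approx\log(1+m)$; combining the upper bound in Theorem~\ref{thm:main}\ref{it:PA:3} with the matching lower bound $\uunc_m[\EB,\XX]\gtrsim \dist_m(d_1(\ww),\ell_1[\ww])$ supplied by the same theorem closes the argument. I expect the only nontrivial computation to be the sum estimate under LRP in Step~2; the rest is bookkeeping with the congruence definition and direct invocation of the main theorem.
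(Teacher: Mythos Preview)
Your proposal is correct and follows the same approach as the paper: take $\Sym=\ell_1$, invoke Proposition~\ref{lem:L1coupleswith} to certify the (proper) coupling, and apply Theorem~\ref{thm:main}. The paper is terser, outsourcing the implication (C.2)$\Rightarrow$(C.1) to an external lemma (\cite{AlbiacAnsorena2016}*{Lemma 2.12}), whereas you work out the LRP summation estimate $\sum_{k=1}^n s_k/k\approx s_n$ explicitly; your computation is correct.

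One small slip in the (C.3) step: you write ``LRP of $\prim$ gives URP of $\prim^*$, so $\prim^*$ is unbounded,'' but URP by itself does not force unboundedness (the constant sequence has URP). The correct chain uses the \emph{other} hypothesis in (C.3): URP of $\prim$ gives LRP of $\prim^*$, and LRP does imply unboundedness. With that swap the argument goes through unchanged.
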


\begin{proof}
If \ref{it:cor:1} holds, $\ww$ is non-summable and null. If $\prim$ has the LRP, it is unbounded and equivalent to the primitive weight of $(s_n/n)_{n=1}^\infty$ (see \cite{AlbiacAnsorena2016}*{Lemma 2.12}). Therefore, \ref{it:cor:3} implies \ref{it:cor:2}, and \ref{it:cor:2} implies \ref{it:cor:1}. So, the result follows by combining Proposition~\ref{lem:L1coupleswith} with Theorem~\ref{thm:main}.
\end{proof}

To finish the section we state a straightforward corollary which solves \cite{AlbiacAnsorena2017b}*{Problem 4.4}.

\begin{corollary}
There are Banach spaces with $1$-almost greedy bases which are not $C$-greedy for any $C\ge 1$.
\end{corollary}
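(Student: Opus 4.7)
The plan is to combine the main construction in Theorem~\ref{thm:main} with the two characterizations of greedy-type bases already recalled in the introduction, namely Theorems~A and~D. Concretely, I would first invoke Theorem~\ref{thm:main} in conjunction with Proposition~\ref{lem:L1coupleswith} (or the explicit constructions in Example~\ref{example:lpv1} or Proposition~\ref{lemma:lp}) to produce, in at least one concrete instance, a Banach space $\BB$ whose unit vector system $\EB$ is a Schauder basis that has Property~(A) and is conditional. The proper coupling hypothesis is what guarantees both features simultaneously: Property~(A) is built into the construction, while conditionality follows from the `moreover' part of Lemma~\ref{lem:Swnotd1w} together with the lower bound on $\uunc_m[\EB,\BB]$ in Theorem~\ref{thm:main}.

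Once such an $\EB$ is in hand, the corollary is essentially immediate. By Theorem~D, satisfying Property~(A) is equivalent to being $1$-almost greedy, so $\EB$ is a $1$-almost greedy basis of $\BB$. On the other hand, Theorem~A asserts that any $C$-greedy basis is (suppression) unconditional with constant $C$; in particular, being $C$-greedy for some $C\ge 1$ forces the basis to be unconditional. Since $\EB$ is conditional, it cannot be $C$-greedy for any $C\ge 1$, which is exactly what the corollary claims. There is no real obstacle here: the entire force of the corollary has been packaged into Theorem~\ref{thm:main}, so the `straightforward' qualifier used by the authors is accurate, and the only step worth being explicit about is translating Property~(A) into $1$-almost greediness via Theorem~D.
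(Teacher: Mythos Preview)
Your proposal is correct and matches the paper's intended approach: the paper states this as a ``straightforward corollary'' with no separate proof, and the only reasonable way to obtain it is exactly what you do---apply Theorem~\ref{thm:main} (with a properly coupling pair) to get a conditional basis with Property~(A), then invoke Theorem~D to conclude $1$-almost greediness and Theorem~A to rule out $C$-greediness for any $C$.
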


\section{Further applications}\label{SectNew}\noindent
The study conducted in \cite{AABHO2024} about the connections between Property~(A) and the unconditionality of bases in the case when the unconditional constants are sharp culminated in the aforementioned Theorem H and led naturally to wonder whether unconditional bases with Property~(A) have suppression unconditionality constant $K_s\le C$ for some absolute constant $C>1$ (see \cite{AABHO2024}*{Question 3.10} and the preceding discussion). Our construction allows us to answer this question in the negative.

\begin{lemma}\label{lem:trivial}
Given an arbitrary non-increasing weight $\ww$, $d_1(\ww)$ couples with $\ww$, and $\XX:=\BB[d_1(\ww), \ww]$ is a renorming of $d_1(\ww)$. Moreover, the pair $((1)_{m=1}^\infty, 1)$ is congruent with $(\ww,d_1(\ww))$.
\end{lemma}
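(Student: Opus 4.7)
The plan is to first observe that the fundamental function of $d_1(\ww)$ coincides with the primitive weight $\prim[\ww]$: for any $A\subset\NN$ with $\abs{A}=m$ and any $\varepsilon\in\EE^A$, the non-increasing rearrangement of $\Ind_{\varepsilon,A}$ is $(1,\ldots,1,0,\ldots)$ with $m$ ones, so that $\Lambda_m=\sum_{n=1}^m w_n=s_m$. Consequently $s_m/\Lambda_m=1$ for every $m$, so the constant sequence $\tb=(1)_{m=1}^\infty$ is (trivially) non-increasing and equivalent to $(s_m/\Lambda_m)_{m=1}^\infty$; this is the candidate for the ``$\tb$'' in the definition of congruence, paired with $c=1$.

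The only substantive step is to verify the inequality \eqref{eq:KPA}, which with these parameters and $\Sym=d_1(\ww)$ reads
\[
\norm{\sum_{n=1}^m a_n\,\ee_n}_{1,\ww}+w_{m+1}\le \norm{\enpar{\sum_{n=1}^m a_n\,\ee_n}+\ee_{m+1}}_{1,\ww}
\]
for every $m\in\NN_0$ and every $(a_n)_{n=1}^m\in\DD^m$. I would let $(a_n^*)_{n=1}^m$ be the non-increasing rearrangement of $(\abs{a_n})_{n=1}^m$; since $\abs{a_n}\le 1$, the non-increasing rearrangement of the vector on the right has a $1$ in position $1$ followed by $(a_n^*)_{n=1}^m$. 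Writing out both norms with the explicit formula $\norm{f}_{1,\ww}=\sum_n a_n^* w_n$ for $f\in c_{00}$, the difference of the right and left sides telescopes to
\[
w_1+\sum_{n=1}^m a_n^*\,(w_{n+1}-w_n)\ge w_1+\sum_{n=1}^m(w_{n+1}-w_n)=w_{m+1},
\]
where the inequality uses $a_n^*\le 1$ together with $w_{n+1}-w_n\le 0$. This proves that $(\tb,1)$ is congruent with $(\ww,d_1(\ww))$ and hence $d_1(\ww)$ couples with $\ww$.

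For the renorming claim, since $r_m=s_m/\Lambda_m=1$, part \ref{it:embedding:A} of Lemma~\ref{lem:embedding} already gives that $d_1(\ww)[\ww]$ is a renorming of $d_1(\ww)$. It remains to see that intersecting with $b_{\ww}$ does not alter the space and only produces an equivalent norm. For $f\in d_1(\ww)$, Lemma~\ref{lem:bwdoow} gives $\abs{T_{\ww,I,B}(f)}\le \abs{T_{\ww,I,\emptyset}(f)}+\norm{f}_{\infty,\ww}$; the rearrangement inequality yields $\abs{T_{\ww,I,\emptyset}(f)}\le \norm{f}_{1,\ww}$, and $\norm{f}_{\infty,\ww}\le \norm{f}_{1,\ww}$ from the embedding $d_1(\ww)\subset d_\infty(\ww)$. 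In the definition of $\norm{\cdot}_{\ww,d_1(\ww)}$, taking $I=\emptyset$ and varying $B$ recovers $\norm{f}_{1,\ww}$, while the preceding estimates combined with $\norm{S_B(f)}_{1,\ww}\le \norm{f}_{1,\ww}$ yield $\norm{f}_{\ww,d_1(\ww)}\le 2\norm{f}_{1,\ww}$. This chain of inequalities gives $\norm{f}_{1,\ww}\le \norm{f}_{\ww,d_1(\ww)}\le 2\norm{f}_{1,\ww}$, completing the proof. The only step where one must be careful is the telescoping calculation in paragraph two; all remaining assertions are immediate consequences of results already proved in Section~\ref{Sect2}.
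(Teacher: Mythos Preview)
Your proof is correct. The paper's proof is shorter: it obtains the congruence of $((1)_{m=1}^\infty,1)$ with $(\ww,d_1(\ww))$ simply by invoking Lemma~\ref{lem:GeneralCoupling} with $q=1$ and $\uu=\ww$ (which yields $\Sym=d_1(\ww)$ and $t_m=s_m/s_m=1$), rather than verifying \eqref{eq:KPA} by hand; and for the renorming it observes in one stroke that $\abs{T_{\ww,I,B}(f)}\le\norm{f}_{1,\ww}$ by the rearrangement inequality applied to the set $I\setminus B$ (no detour through Lemma~\ref{lem:bwdoow} is needed), then appeals to Lemma~\ref{lem:embedding}\ref{it:embedding:A}. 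Your direct telescoping computation is a perfectly good substitute for the citation of Lemma~\ref{lem:GeneralCoupling}---and has the minor advantage of not formally requiring $\ww$ to be non-summable, which that lemma's hypotheses do---while your bound on $\abs{T_{\ww,I,B}(f)}$ is slightly looser (constant $2$ instead of $1$), but this is immaterial for the equivalence of norms.
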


\begin{proof}
Clearly, $\abs{T_{\ww,I,B}(f)} \le \norm{f}_{1,\ww}$ for all $f\in\FF^\NN$, and $\FuFu[d_1(\ww)]=\prim[\ww]$. By Lemma~\ref{lem:embedding}\ref{it:embedding:A}, $\XX=d_1(\ww)$. Applying Lemma~\ref{lem:GeneralCoupling} with $q=1$ and $\uu=\ww$ puts an end to the proof.
\end{proof}

\begin{theorem}
For each $C\ge 1$ there is a renorming of $\ell_1$ with respect to which the unit vector system $\EB$ has Property~(A) while fails to be $C$-suppression unconditional. Besides, we can choose this renorming so that $\EB$ is isometrically bidemocratic and bimonotone.
\end{theorem}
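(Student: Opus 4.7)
The plan is to apply Theorem~\ref{thm:main} together with Lemma~\ref{lem:trivial} to obtain a renorming of $\ell_1$ carrying a unit vector basis with Property~(A), isometric bidemocracy, and bimonotonicity; the genuine work then lies in arranging that this basis fails to be $C$-suppression unconditional for the given $C$.

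The direct recipe is this: pick a non-increasing, non-summable, and non-null weight $\ww=\ww(C)$, depending on $C$, so that $d_1(\ww)=\ell_1$ as a set, and apply Lemma~\ref{lem:trivial} with the congruent pair $((1)_{m=1}^\infty,1)$ to identify $\BB:=\BB[d_1(\ww),\ww]$ as a renorming of $\ell_1$. Theorem~\ref{thm:main} applied with $\Sym=d_1(\ww)$ then immediately hands over that the unit vector system $\EB$ is a bimonotone Schauder basis of $\BB$, isometrically bidemocratic, and satisfies Property~(A); none of these conclusions depends on the specific $\ww$, so they come for free.

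The main obstacle is to calibrate $\ww(C)$ so that, inside the norm $\norm{\cdot}_{\ww,d_1(\ww)}$, one can exhibit explicit test vectors $f\in\BB$ and suppression sets $A\subset\NN$ with $\norm{S_Af}_{\ww,d_1(\ww)}\ge C\,\norm{f}_{\ww,d_1(\ww)}$. I would construct $(f,A)$ so as to exploit the asymmetry introduced by the functionals $T_{\ww,I,B}$: the vector $f$ is designed to enjoy substantial cancellation in $T_{\ww,I,B}(f)$ for \emph{every} admissible pair $(B,I)\in\Bt$, keeping $\norm{f}_{\ww,d_1(\ww)}$ modest, while $A$ is chosen to break that cancellation, forcing some pair $(B,I)$ to contribute a $T$-term to $\norm{S_Af}_{\ww,d_1(\ww)}$ that dwarfs what is available for $f$. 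The delicate point here is that the $d_1(\ww)$-contribution to the sup in $\norm{\cdot}_{\ww,d_1(\ww)}$ is blind to signs, so the gap between the two norms must be manufactured entirely by the $T_{\ww,I,B}$ part; a profile of $\ww$ whose first few entries grow sharply with $C$, against a background where the remaining entries are normalised, seems the natural ansatz.

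The technical heart of that step will be a case analysis over all $(B,I)\in\Bt$ which pins down the optimizers of both suprema, using Lemma~\ref{lem:bwdoow} to control $|T_{\ww,I,B}|$ by $|T_{\ww,I,\emptyset}|$ plus an $\infty$-weighted norm, and using the rearrangement structure of $\norm{\cdot}_{d_1(\ww)}$ to bound $\norm{S_Bf}_{d_1(\ww)}$ from above. Once such a pair $(f,A)$ is exhibited for the chosen $\ww=\ww(C)$, the theorem follows: $\BB$ is the required renorming of $\ell_1$, and $\EB$ is bimonotone, isometrically bidemocratic, satisfies Property~(A) by Theorem~\ref{thm:main}, and fails to be $C$-suppression unconditional by construction.
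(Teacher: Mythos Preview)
Your overall plan---pick a non-null weight so that the ambient space collapses to $\ell_1$, then invoke Theorem~\ref{thm:main}---is the right shape, but the specific choice $\Sym=d_1(\ww)$ with congruent pair $((1)_{m=1}^\infty,1)$ cannot produce suppression constant larger than~$3/2$, no matter how you tune $\ww$. Indeed, for \emph{any} non-increasing weight $\ww$ and any $(B,I)\in\Bt$ one has $\|S_B f\|_{d_1(\ww)}\le\|f\|_{d_1(\ww)}$ and, by the rearrangement inequality, $|T_{\ww,I,B}(f)|\le\sum_n w_n|a_n|\le\|f\|_{d_1(\ww)}$; hence
\[
\|f\|_{d_1(\ww)}\le \|f\|_{\ww,d_1(\ww)}\le \tfrac{3}{2}\|f\|_{d_1(\ww)}.
\]
Since $d_1(\ww)$ is $1$-suppression unconditional, $\|S_A f\|_{\ww,d_1(\ww)}\le\frac{3}{2}\|S_A f\|_{d_1(\ww)}\le\frac{3}{2}\|f\|_{d_1(\ww)}\le\frac{3}{2}\|f\|_{\ww,d_1(\ww)}$ for every $A$. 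So your promised ``explicit test vectors'' simply do not exist once $C>3/2$, and no profile of $\ww$ can rescue this: the obstruction is that with $t_m\equiv 1$ the symmetric part of the norm already dominates the $T$-part.

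The paper avoids this by taking $\Sym=\ell_1$ (with its standard norm) and coupling it with a \emph{truncated} weight $\ww_m=(w_{\min\{n,m\}})_{n=1}^\infty$ built from a fixed null, non-summable $\ww$, using Proposition~\ref{lem:L1coupleswith} rather than Lemma~\ref{lem:trivial}; the congruent pair is then $\tb=(\tilde s_k/k)_{k=1}^\infty$, $c=1$. Because $\ww_m$ is non-null, $\BB[\ww_m,\ell_1]$ is still a renorming of $\ell_1$, and Theorem~\ref{thm:main} delivers Property~(A), bimonotonicity and isometric bidemocracy. The key gain is that now $\Sym[\ww_m]=\ell_1[\ww_m]$ is genuinely larger than $d_1(\ww_m)$ on the first $m$ coordinates, so the built-in lower bound~(v) of Theorem~\ref{thm:main} gives $\uunc_m\ge\tfrac{1}{8}\dist_m(d_1(\ww_m),\ell_1[\ww_m])=\tfrac{1}{8}\dist_m(d_1(\ww),\ell_1[\ww])$, and this tends to infinity with $m$. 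No ad hoc test vectors are needed; the estimate is already provided by Theorem~\ref{thm:main}.
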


\begin{proof}
Pick an arbitrary non-increasing, non-summable, null weight $\ww=(w_n)_{n=1}^\infty$. For each $m\in\NN$ put
\[
\ww_m =\enpar{ w_{\min\{n,m\}}}_{n=1}^\infty.
\]
Since $\ww_m$ fails to be null, by Lemma~\ref{lem:trivial},
\[
\BB_m:=\BB[\ell_1,\ww_m]=\ell_1[d_1(\ww_m),\ww_m]=d_1(\ww_m)=\ell_1.
\]
By Lemma~\ref{lem:trivial} and Theorem~\ref{thm:main}, the space $\ell_1$ equipped with the norm of $\BB_m$ has Property~(A). Now, if the unit vector system were $C$-suppression unconditional with respect to that norm, we would have
\[
C \ge \frac{1}{8} \dist_m(d_1(\ww_m),\ell_1[\ww_m]).
\]
Since $\norm{f}_{\ell_1[\ww_m]}=\norm{f}_{\ell_1[\ww]}$ and $\norm{f}_{1,\ww_m}=\norm{f}_{1,\ww}$ when $f$ has at most $m$ nonzero coordinates,
\[
C\ge \frac{1}{8} \dist_m(d_1(\ww),\ell_1[\ww])= C_m:= \frac{1}{8} \dist_m(d_1(\ww),m(\ww)).
\]
Since $d_1(\ww) \not=m(\ww)$, $\lim_m C_m=\infty$.
\end{proof}

Let us next unearth the discussion about the relation between unconditionality and quasi-greediness in the isometric case. This question was partially addressed in \cite{AlbiacAnsorena2016c}, where it was shown that, oddly enough, being $1$-suppression unconditional is equivalent to being $1$-quasi-greedy, i.e.,
\[
\norm{\Gt_m(x)}\le \norm{x}, \quad x\in\XX, \, m\in\NN,
\]
(see Theorem C above). However, it was left open whether being $1$-suppression quasi-greedy, i.e., verifying
\[
\norm{x-\Gt_m(x)} \le \norm{x}, \quad x\in\XX, \, m\in\NN,
\]
also implies $1$-suppression unconditionality, or at least being unconditional. Since a basis with Property~(A) is $1$-almost greedy and therefore $1$-suppression quasi-greedy, Theorem~\ref{thm:main} answers in the negative this question, which was first raised in \cite{AlbiacAnsorena2016c}*{Problem 3.4} and recently reformulated in \cite{AAT2024}*{comments after Problem~6}). A standard truncation argument will allow us to go a bit further by improving the geometry of the underlying space.

\begin{theorem}
There is a reflexive Banach space $\XX$ with a conditional, bimonotone basis $\XB$ that is isometrically bidemocratic, $1$-suppression quasi-greedy, and satisfies
\[
\unc_m[\XB,\XX] \approx \uunc_m[\XB,\XX] \approx \log(1+m), \quad m\in\NN.
\]
\end{theorem}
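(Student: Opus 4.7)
The plan is to leverage Theorem~\ref{thm:main} calibrated as in Example~\ref{example:lpv1} to obtain a basis with all the desired isometric features except reflexivity, and then repair the geometry via a standard $\ell_p$-sum truncation.

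Fix $1<p<\infty$, take $\ww_n\approx n^{1/p-1}$ and $\Sym=\ell_1$, and apply Example~\ref{example:lpv1} to obtain the Banach space $\BB:=\BB_0[\ww,\ell_1]$ whose unit vector system $\EB$ is a conditional, bimonotone, isometrically bidemocratic Schauder basis with Property~(A), fundamental function $\usdf_m[\EB,\BB]=m^{1/p}$, and $\unc_m[\EB,\BB]\approx\uunc_m[\EB,\BB]\approx\log(1+m)$. By Theorem~D, Property~(A) is equivalent to $1$-almost greediness, and $1$-almost greediness trivially entails $1$-suppression quasi-greediness (the empty set is always admissible in the definition of the best almost-greedy error). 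The obstruction to reflexivity of $\BB$ is that $d_1(\ww)=\ell_{p,1}\subset\BB$ is a non-reflexive subspace. To remedy this, for each $N\in\NN$ set $X_N=\spn\{\ee_1,\ldots,\ee_N\}$ with the norm inherited from $\BB$, and put
\[
\XX=\Bigl(\bigoplus_{N=1}^\infty X_N\Bigr)_{\ell_p},
\]
which is reflexive because $1<p<\infty$ and each summand is finite-dimensional. Let $\XB$ be the Schauder basis of $\XX$ obtained by concatenating the bases of the $X_N$'s in the natural order.

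The verification that $\XB$ inherits the isometric features of $\EB$ crucially exploits the matching of the fundamental-function exponent of Example~\ref{example:lpv1} with the $\ell_p$-sum exponent. Bimonotonicity is automatic, since any interval of the global enumeration restricts to an interval in each block and the bases of the $X_N$ are bimonotone. For isometric bidemocracy, given $A\subset\NN$ with $\abs{A}=m$, $\varepsilon\in\EE^A$, and the block decomposition $A=\bigsqcup_N A_N$, the isometric superdemocracy of $\EB$ in $\BB$ (inherited blockwise) yields
\[
\norm{\Ind_{\varepsilon,A}}_\XX^p=\sum_N\norm{\Ind_{\varepsilon|_{A_N},A_N}}_{X_N}^p=\sum_N\abs{A_N}=m,
\]
so $\norm{\Ind_{\varepsilon,A}}_\XX=m^{1/p}$. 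For the dual side, the sup in $\norm{\Ind^*_{\varepsilon,A_N}}_{\BB^*}=\abs{A_N}^{1/p'}$ is attained by $y=\abs{A_N}^{-1/p}\Ind_{\bar{\varepsilon},A_N}\in X_N$, so $\norm{\Ind^*_{\varepsilon,A_N}}_{X_N^*}=\abs{A_N}^{1/p'}$ as well, and the parallel $\ell_{p'}$-sum computation in $\XX^*=(\bigoplus_N X_N^*)_{\ell_{p'}}$ gives $\norm{\Ind^*_{\varepsilon,A}}_{\XX^*}=m^{1/p'}$. Conditionality and the equivalence $\unc_m[\XB,\XX]\approx\uunc_m[\XB,\XX]\approx\log(1+m)$ follow from the upper bound $\norm{S_A}_\XX\le\sup_N\norm{S_{A\cap X_N}}_{X_N}$ and a lower bound obtained by embedding a whole block $X_N$ of sufficient size.

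The step that requires the most care is $1$-suppression quasi-greediness of $\XB$, since the thresholding greedy algorithm acts across all blocks at once. The key observation is that if $x=(x_N)_N\in\XX$ and $\Gt_m(x)$ selects the $m$ largest coefficients of $x$ globally, then the restriction to the $N$-th block is exactly an $m_N$-term greedy approximation of $x_N$ for some non-negative integers $m_N$ with $\sum_N m_N=m$. Since greedy approximations of vectors supported in $[1,N]$ stay inside $[1,N]$, the $1$-suppression quasi-greediness of $\EB$ in $\BB$ restricts blockwise, and the $\ell_p$-identity
\[
\norm{x-\Gt_m(x)}_\XX^p=\sum_N\norm{x_N-\Gt_{m_N}(x_N)}_{X_N}^p\le\sum_N\norm{x_N}_{X_N}^p=\norm{x}_\XX^p
\]
closes the argument. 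Any mismatch between the $\ell_p$-sum exponent and the fundamental-function exponent would simultaneously break both isometric bidemocracy and the blockwise $\ell_p$-contraction underlying $1$-suppression quasi-greediness; keeping them equal is what makes the whole construction work.
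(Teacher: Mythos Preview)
Your argument is correct and follows essentially the same route as the paper: build the basis of Example~\ref{example:lpv1}, truncate to finite-dimensional initial segments, and take the $\ell_p$-direct sum so that the exponent matches the fundamental function. The only cosmetic difference is that the paper uses blocks of size $2^{k-1}$ rather than $N$, and it simply asserts without detail that the listed properties (bimonotonicity, isometric bidemocracy, $1$-suppression quasi-greediness, and the growth of $\uunc_m$) pass to the $\ell_p$-sum, whereas you spell out the blockwise verifications; your explicit check that the dual fundamental function is attained on the finite-dimensional slice (so that $\norm{\Ind^*_{\varepsilon,A_N}}_{X_N^*}=\abs{A_N}^{1/p'}$) and the blockwise decomposition of $\Gt_m$ are exactly the missing details.
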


\begin{proof}
Fix $1<p<\infty$. By Example~\ref{example:lpv1} there is a Banach space $\YY$ with a basis $\YB=(\yy_n)_{n=1}^\infty$ such that
\begin{itemize}[leftmargin=*]
\item $\YB$ is bimonotone,
\item $\YB$ is isometrically bidemocratic with $\usdf_m[\YB,\YY]=m^{1/p}$ for all $m\in\NN$,
\item $\YB$ is $1$-suppression quasi-greedy.
\item $\uunc_m[\YB,\YY] \approx \log(1+m)$ for $m\in\NN$.
\end{itemize}
For each $k\in\NN$, let $\YB_k=(\yy_n)_{n=1}^{2^{k-1}}$ and $\YY_k=\spn(\YB_k)$. The natural arrangement of
\[
\XB=\oplus_{k=1}^\infty \YB_k
\]
is a basis of $\enpar{\oplus_{k=1}^\infty \YY_k}_{\ell_p}$ that inherits from $\YB$ all the above mentioned properties.
\end{proof}

\section{The geometry of \texorpdfstring{$\BB[\ww,\Sym]$}{}}\label{Sect4}\noindent
In this section we study some properties of the Banach space $\BB[\ww,\Sym]$ constructed in Section~\ref{sect:main}.
The only restriction on the weight $\ww$ and the symmetric Banach space $\Sym$ we now assume a priori is $(s_m/\Lambda_m)_{m=1}^\infty$ to be bounded so that we can construct the symmetric Banach space $\Sym[\ww]$.
Replacing $\Sym[\ww]$ with $\Sym$ we can assume that $\Sym[\ww]=\Sym$ and that $\Lambda_n\approx s_n$ for all $n\in\NN$. We will consider on $\BB[\ww,\Sym]$ the norm
\[
\Nnorm{f}_{\ww,\Sym}=\max \enbrace{ \norm{f}_{\Sym}, \norm{f}_{b_{\ww}}}.
\]
By Lemma~\ref{lem:new1}, this norm is also bimonotone.

\begin{lemma}\label{lem:uvbNBC}
Let $\ww$ be a non-increasing weight and $\Sym$ be a symmetric Banach space with $\FuFu[\Sym]\approx\prim[\ww]$ and $d_1(\ww) \subsetneq \Sym$. There exist pairwise disjoint intervals $(J_k)_{k=1}^\infty$ in $\It$, and vectors $(f_k)_{k=1}^\infty$ and $(g_A)_{A\subset\NN}$ in $\BB[\ww,\Sym]$ such that
\begin{itemize}
\item $\supp(f_k)=J_k$ for all $k\in\NN$,
\item $\inf\{\Nnorm{f_k}_{\ww,\Sym} \colon k\in\NN\}>0$,
\item $\supp(g_A)=\cup_{k\in A} J_k$ for all $A\subset\NN$,
\item $\sup\{ \Nnorm{g_A}_{\ww,\Sym} \colon A\subset\NN\}<\infty$, and
\item for each $A\subset\NN$ there is $(\varepsilon_n)_{n\in A}$ in $\EE$ such that
$S_{J_k}(g_A)=\varepsilon_k f_k$ for all $k\in A$.
\end{itemize}
\end{lemma}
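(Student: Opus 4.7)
My plan is to build the block sequence as the restriction of a carefully signed element of $\Sym\setminus d_1(\ww)$ to a sequence of intervals chosen inductively. Since $d_1(\ww)\subsetneq \Sym$, I can pick a non-negative, non-increasing $v=(v_n)\in \Sym$ with $\|v\|_{\Sym}$ finite and $\sum_n v_n w_n=\infty$. By Corollary~\ref{cor:Uniquel1loo}, $\ww$ is null and non-summable; the fundamental function inequality $v_n \Lambda_n\le \|v\|_{\Sym}$ together with $\Lambda_n\approx s_n\to \infty$ forces $v_n\to 0$, hence $w_n v_n\to 0$. Set $\tilde v:=((-1)^n v_n)_{n=1}^\infty$. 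To check $\tilde v\in \BB[\ww,\Sym]=\Sym\cap b_{\ww}$, sign invariance of symmetric norms gives $\|\tilde v\|_{\Sym}=\|v\|_{\Sym}$; for $(B,I)\in \Bt$, I split $T_{\ww,I,B}(\tilde v)=T_{\ww,I,\emptyset}(\tilde v)-\sum_{j\in B\cap I}(-1)^j w_j v_j$. The first summand is an alternating series in the non-increasing sequence $(w_j v_j)_{j\ge \min I}$, hence bounded by $w_{\min I} v_{\min I}\le w_1 v_1$. For the second, each $j\in B\cap I$ satisfies $j\ge \min I>|B|$, and combining $v_j\le \|v\|_{\Sym}/\Lambda_j$ with $j w_j\le s_j\approx \Lambda_j$ gives $w_j v_j\lesssim \|v\|_{\Sym}/j$; summing, $\sum_{j\in B\cap I} w_j v_j \le \|v\|_{\Sym}\cdot |B|/\min I\lesssim \|v\|_{\Sym}$, so $\|\tilde v\|_{b_{\ww}}<\infty$.

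Next, I choose integers $0=n_0<n_1<n_2<\cdots$ inductively such that, writing $J_k:=(n_{k-1},n_k]$, (i) $w_{n_{k-1}+1} v_{n_{k-1}+1}\le 2^{-k}$, (ii) $\sum_{j\in J_k} w_j v_j\ge c_0$ for a fixed $c_0>0$, and (iii) $n_k<3 n_{k-1}$. I then set $f_k:=S_{J_k}(\tilde v)$ and $g_A:=S_{\bigcup_{k\in A} J_k}(\tilde v)=\sum_{k\in A} f_k$, so that $\varepsilon_k=1$ for every $k\in A$. Clause (i) is feasible since $w_n v_n\to 0$. The simultaneous attainment of (ii) and (iii) is the main technical hurdle: it requires $\sum w_n v_n$ to diverge at a rate keeping bounded-below mass on geometrically-growing intervals, which forces one to choose $v$ with some care (exploiting that the distortion $\dist_m(d_1(\ww),\Sym)$ blows up, so that witnesses of the strict inclusion with controlled decay can be produced). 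When $\Sym\ne \Sym_0$, there is an alternative route: pick $v\in \Sym\setminus \Sym_0$ and use a classical disjoint-support extraction (failure of order continuity of the symmetric norm at $v$) to obtain intervals $J_k$ with $\|S_{J_k}(v)\|_{\Sym}\ge \epsilon$, which gives $\Nnorm{f_k}\ge \epsilon$ directly from the $\Sym$-component.

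Granted the inductive choices, I verify the two required bounds. For $\inf_k \Nnorm{f_k}>0$, take $I=J_k$ and $B:=\{j\in J_k: j\text{ odd}\}$; by (iii), $|B|\le n_{k-1}<\min(J_k)$, so $(B,I)\in \Bt$, and
\[
T_{\ww,I,B}(f_k)=\sum_{j\in J_k,\, j\text{ even}} w_j v_j\ \ge\ \tfrac{1}{3}\sum_{j\in J_k} w_j v_j\ \ge\ c_0/3,
\]
where the first inequality uses monotonicity of $(w_j v_j)$. Hence $\|f_k\|_{b_{\ww}}\ge c_0/3$. For $\sup_A \Nnorm{g_A}<\infty$, the $\Sym$-component satisfies $\|g_A\|_{\Sym}\le \|\tilde v\|_{\Sym}$ by the lattice property. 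For the $b_{\ww}$-component, given $(B,I)\in \Bt$ I decompose
\[
T_{\ww,I,B}(g_A)=\sum_{\substack{k\in A\\ J_k\cap I\ne \emptyset}}\, \sum_{j\in (J_k\cap I)\setminus B} (-1)^j w_j v_j;
\]
each inner sum is an alternating series on the interval $J_k\cap I$ (minus the $B$-points), bounded in absolute value by $w_{p_k} v_{p_k}$ with $p_k:=\min(J_k\cap I)\ge n_{k-1}+1$, plus a local $B$-correction. Summing over $k$ using (i) gives $\sum_k w_{n_{k-1}+1} v_{n_{k-1}+1}\le 1$, while the total $B$-correction is bounded by the same fundamental-function argument as in Step~1, yielding $\lesssim \|v\|_{\Sym}$ independently of $A$. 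Thus $\|g_A\|_{b_{\ww}}\lesssim 1+\|v\|_{\Sym}$, and the proof is complete.
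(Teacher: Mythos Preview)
Your argument has a genuine gap precisely where you flag the ``main technical hurdle'': conditions (ii) and (iii) cannot in general be achieved simultaneously, and you never exhibit a $v$ for which they can. Condition (iii), $n_k<3n_{k-1}$, forces $J_k\subset(n_{k-1},3n_{k-1}]$, so (ii) requires $\sum_{j=N+1}^{3N} w_j v_j\ge c_0$ for infinitely many $N$; combined with (i) this essentially pins $w_n v_n\approx 1/n$. But then $v_n\approx 1/(n w_n)=\Lambda'_n/\Lambda_n$ (using $n w_n\approx s_n\approx\Lambda_n$ and Proposition~\ref{prop:dualsymmetric}), and there is no reason for this sequence to lie in $\Sym$. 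Concretely, take $\Sym=\ell_p$ with $1<p<\infty$ and $w_n=n^{1/p-1}$; any non-increasing $v\in\ell_p\setminus d_1(\ww)$ satisfies $\sum v_n^p<\infty$, whence $v_n=o(n^{-1/p})$ and $w_n v_n=o(1/n)$, which forces $\sum_{N+1}^{3N} w_j v_j\to 0$. So for such $v$ neither (ii)$\wedge$(iii) nor the direct $\Sym$-lower bound $\norm{S_{J_k}(v)}_{\Sym}\ge\epsilon$ is available (since $\norm{S_{(N,\infty)}(v)}_{\Sym}\to 0$), and your alternative route via $\Sym\ne\Sym_0$ does not apply either. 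The need for (iii) is intrinsic to your coordinate-level sign choice: to extract a positive functional value from the alternating vector $f_k$ you must remove roughly half of $J_k$ via $B$, and the constraint $\abs{B}<\min(I)$ then caps $\abs{J_k}$.

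The paper sidesteps this by signing at the \emph{block} level rather than the coordinate level. It keeps the $f_k=\sum_{n\in J_k} a_n\,\ee_n$ unsigned, so that $\Nnorm{f_k}\ge T_{\ww,J_k,\emptyset}(f_k)=\lambda_k$ with the trivial pair $(\emptyset,J_k)$; no restriction on $\abs{J_k}$ is needed. The intervals are chosen so that the block masses $\lambda_k=\sum_{n\in J_k} a_n w_n$ are nearly constant, namely $\sum_k\abs{\lambda_k-\lambda_{k+1}}<\lambda_1$, which is always possible because $a_n w_n\to 0$ while $\sum a_n w_n=\infty$. The cancellation in $\norm{g_A}_{b_{\ww}}$ then comes from setting $\varepsilon_{k,A}=\pm 1$ according to the parity of $k$'s position within $A$, so that $\sum_{k\in L}\varepsilon_{k,L}\lambda_k$ telescopes and stays bounded by $\lambda_1+\sum_k\abs{\lambda_k-\lambda_{k+1}}$ for every sub-interval $L$ of $A$. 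Thus the signs depend on $A$ (which is allowed by the statement), and this is exactly what makes the construction go through without any growth constraint on the $J_k$.
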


\begin{proof}
Given $A\subset\NN$ and $k\in A$, set $\varepsilon_{k,A}=1$ if $k$ is in an even place within $A$ and $\varepsilon_{k,A}=-1$ if $k$ is in an odd place within $A$. Pick and non-increasing positive sequence $f=(a_n)_{n=1}^\infty$ with
\[
\norm{f}_{1,\ww}=\sum_{n=1}^\infty a_n \, w_n=\infty
\]
and $\norm{f}_{\Sym[\ww]}<\infty$. We have
\[
a_n \, w_n \le \norm{f}_{\infty,\ww} \frac{w_n}{s_n} \le \frac{1}{n} \norm{f}_{\infty,\ww}, \quad n\in\NN.
\]
Since $f\in d_\infty(\ww)$, $\lim_n a_n \, w_n=0$. Consequently, we can recursively construct a sequence $(J_k)_{k=1}^\infty$ in $\It$ such that $J_k\not=\emptyset$ and $\max(J_k)<\min(J_{k+1})$ for all $k\in\NN$, and, if we set
\[
\lambda_k=\sum_{n\in J_k} a_n \, w_n, \quad k\in\NN,
\]
$\lambda:=\sum_{k=1}^\infty \abs{\lambda_k-\lambda_{k+1}}<\lambda_1$. Note that this implies
\begin{equation*}
\abs{\sum_{k\in A} \varepsilon_{k,A} \lambda_k} \le \lambda
\end{equation*}
for all $A\subset\NN$ finite and even. Consequently,
\begin{itemize}[leftmargin=*]
\item $ \lambda_k\ge D:=\lambda_1-\lambda$ for all $k\in\NN$, and
\item $\abs{\sum_{k\in A} \varepsilon_{k,A} \lambda_k} \le E:= \lambda_1+\lambda$ for all $A\subset\NN$ finite. In particular,
\item $\abs{\lambda_k} \le E$ for all $k\in\NN$.
\end{itemize}

Set for each $k\in\NN$ $f_k=\sum_{n\in J_k} a_n\, \ee_n$. We have
\[
\Nnorm{f_k}_{\ww,\Sym} \ge T_{\ww,J_k,\emptyset} \enpar{f_k} = \lambda_k \ge D, \quad k\in\NN.
\]

Define $g_A=(b_n)_{n=1}^\infty \in\FF^\NN$ by $b_n=\varepsilon_k \, a_n$ if $n\in J_k$ for some $k\in A$ and $b_n=0$ otherwise. Pick $(B,I)\in\Bt$ and set
\[
L=\enbrace{k\in A \colon J_k\subset I}, \quad K=\enbrace{k\in A \colon J_k\cap I \not=\emptyset}.
\]
Since $\abs{K\setminus L}\le 2$ and $L$ is an interval relative to $A$, applying Lemma~\ref{lem:bwdoow} we obtain
\begin{align*}
\abs{T_{\ww,B,I}(g_A)} & \le \norm{g_A}_{\infty,\ww}+ \abs{\sum_{k\in A} \varepsilon_{k,A} \sum_{n\in I\cap J_k} a_n w_n}\\
&= \norm{g_A}_{\infty,\ww}+ \abs{\sum_{k\in K} \varepsilon_{k,A} \sum_{n\in I\cap J_k} a_n w_n}\\
&\le \norm{g_A}_{\infty,\ww} + 2 E + \abs{\sum_{k\in L} \varepsilon_{k,A} \sum_{n\in I\cap J_k} a_n w_n}\\
&=\norm{g_A}_{\infty,\ww} + 2 E + \abs{\sum_{k\in L} \varepsilon_{k,L} \lambda_k}\\
&\le \norm{f}_{\infty,\ww} +3E.
\end{align*}
By Lemma~\ref{lem:embedding}, $\Nnorm{g_A}_{\ww,\Sym} \le \norm{f}_{\Sym[\ww]} +3E$.
\end{proof}

\begin{proposition}\label{prop:FBC}
Let $\ww$ be a non-increasing weight and $\Sym$ be a symmetric Banach space with $\FuFu[\Sym]\approx\prim[\ww]=(s_n)_{n=1}^\infty$ and $d_1(\ww) \subsetneq \Sym$. Then
\begin{enumerate}[label=(\roman*)]
\item\label{it:FBC:a} The Banach space $\BB[\ww,\Sym]$ is not separable,
\item\label{it:FBC:dos} the unit vector system is not a boundedly complete basis of the separable part $\BB_0[\ww,\Sym]$ of $\BB[\ww,\Sym]$,
\item\label{it:FBC:c} $c_{00}$ is not dense in $\BB[\ww,\Sym]$, and
\item
the unit vector system fails to be a shrinking basis of $\BB_0[\ww,\Sym]$.
\end{enumerate}
\end{proposition}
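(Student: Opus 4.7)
The plan is to derive all four items from Lemma~\ref{lem:uvbNBC}, systematically exploiting the bimonotonicity of $\Nnorm{\cdot}_{\ww,\Sym}$ noted at the opening of the section. Fix the intervals $(J_k)$, the block sequence $(f_k)$ and the uniformly bounded family $(g_A)_{A\subset\NN}$ supplied by that lemma, and set $\delta:=\inf_{k\in\NN}\Nnorm{f_k}_{\ww,\Sym}>0$.

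The driving observation for \ref{it:FBC:a} and \ref{it:FBC:c} is that whenever two vectors $u,v\in\BB[\ww,\Sym]$ satisfy $S_{J_k}(u-v)=\pm f_k$ for some $k$, bimonotonicity forces $\Nnorm{u-v}_{\ww,\Sym}\ge \delta$. For distinct $A,A'\subset\NN$ and any $k_0\in A\triangle A'$, the pair $(g_A,g_{A'})$ falls into this situation, so $\{g_A\}_{A\subset\NN}$ is an uncountable, bounded, $\delta$-separated subset of $\BB[\ww,\Sym]$, which gives \ref{it:FBC:a}. Specialising to $A=\NN$, for any $h\in c_{00}$ one picks $k$ with $J_k\cap\supp(h)=\emptyset$; the same observation yields $\Nnorm{g_\NN-h}_{\ww,\Sym}\ge\delta$, so $g_\NN\in\BB[\ww,\Sym]\setminus\BB_0[\ww,\Sym]$, establishing \ref{it:FBC:c}.

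For \ref{it:FBC:dos}, let $h_N:=S_{[1,N]}(g_\NN)\in c_{00}\subset\BB_0[\ww,\Sym]$; bimonotonicity gives $\Nnorm{h_N}_{\ww,\Sym}\le\Nnorm{g_\NN}_{\ww,\Sym}<\infty$, so the coordinate expansion of $g_\NN$ has bounded partial sums. If the unit vector system were boundedly complete in $\BB_0[\ww,\Sym]$, this expansion would norm-converge there; but norm convergence entails coordinatewise convergence, so the limit would coincide with $g_\NN$, contradicting \ref{it:FBC:c}.

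Finally, for the failure of shrinkingness, the natural candidate for a bounded block basic sequence that is not weakly null is $(f_k)$ itself; its uniform boundedness follows from bimonotonicity of $\norm{\cdot}_{\Sym}$ together with the estimate $\abs{T_{\ww,I,B}(f_k)}\le \sum_{n\in J_k}a_n w_n$ visible in the construction of Lemma~\ref{lem:uvbNBC}. The main obstacle is exhibiting a functional $\phi\in\BB_0[\ww,\Sym]^*$ that stays bounded away from $0$ along $(f_k)$. My plan is to take $\phi$ as the weak-$*$ limit of $\varphi_N:=T_{\ww,[1,N],\emptyset}$. Each $\varphi_N$ has dual norm at most $1$ because $\abs{\varphi_N(f)}\le\norm{f}_{b_{\ww}}\le\Nnorm{f}_{\ww,\Sym}$, and $\varphi_N(f)$ eventually stabilises at $\sum_{n}w_n f(n)$ for $f\in c_{00}$; combining this with uniform boundedness and the density of $c_{00}$ in $\BB_0[\ww,\Sym]$ promotes it to weak-$*$ convergence on the whole space. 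Then $\phi(f_k)=\sum_{n\in J_k}a_n w_n$, which by the proof of Lemma~\ref{lem:uvbNBC} is bounded below by the positive constant there denoted $D$; hence $(f_k)$ does not converge weakly to $0$ and the unit vector system is not shrinking in $\BB_0[\ww,\Sym]$.
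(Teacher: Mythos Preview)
Your argument is correct. Parts \ref{it:FBC:a}, \ref{it:FBC:dos} and \ref{it:FBC:c} follow the paper's proof essentially verbatim: the paper also separates $\{g_A\}$ via bimonotonicity for \ref{it:FBC:a}, uses $g_\NN$ and the bounded partial sums for \ref{it:FBC:dos}, and deduces \ref{it:FBC:c} from either of these.

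For part (iv) you diverge. The paper builds a \emph{new} block sequence of normalized indicators $\tfrac{1}{s_{n_k}}\Ind_{(n_{k-1},n_k]}$ with $s_{n_k}-s_{n_{k-1}}\ge\tfrac12 s_{n_k}$, which is trivially bounded in $d_1(\ww)$ (hence in $\BB[\ww,\Sym]$) and on which the functional $f\mapsto\sum_n w_n f(n)$ stays above $\tfrac12$. You instead recycle the blocks $(f_k)$ already supplied by Lemma~\ref{lem:uvbNBC} and pair them with the same functional. Your route is more economical---no extra construction---but the boundedness of $(f_k)$ in $\BB[\ww,\Sym]$ relies on the estimate $\lambda_k\le E$ buried inside the proof of the lemma rather than in its statement, so you are implicitly strengthening the lemma's output. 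The paper's choice has the virtue that the bound is immediate from $d_1(\ww)\subset\BB[\ww,\Sym]$. Both approaches are valid; your weak-$*$ limit description of $\phi$ is just an explicit unpacking of the paper's one-line remark ``$\ww\in\BB_0^*[\ww,\Sym]$''.
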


\begin{proof}
We will use the terminology of Lemma~\ref{lem:uvbNBC}. Let $A$ and $B$ be different subsets of $\NN$, and pick $k\in A \triangle B$. By bimonoticity,
\[
\Nnorm{g_A-g_B}_{\ww,\Sym} \ge \Nnorm{S_{J_k}(g_A-g_B)}_{\ww,\Sym}=\Nnorm{f_k}_{\ww,\Sym}.
\]
Hence, \ref{it:FBC:a} holds.

Set $g:=g_\NN=(b_n)_{n=1}^\infty$. By bimonoticity,
\[
\sup_{m\in\NN}\norm{\sum_{n=1}^m b_n\, \ee_n}_{\ww,\Sym}<\infty.
\]
Since $\sum_{n=1}^\infty b_n\, \ee_n$ does not converge, \ref{it:FBC:dos} holds. We can deduce \ref{it:FBC:c} either from \ref{it:FBC:a} or from the fact that $g\in \BB[\ww,\Sym]\setminus \BB_0[\ww,\Sym]$.

Since $\ww$ is non-summable, we can recursively construct an increasing sequence $(n_k)_{k=0}^\infty$ such that $n_0=0$ and
\[
s_{n_k}-s_{n_{k-1}}=\sum_{n=1+n_{k-1}}^{n_k} w_k \ge \frac{1}{2} s_{n_k}.
\]
Now, on one hand, since $\ww\in \BB_0^*[\ww,\Sym]$, the block basic sequence $(f_k)_{k=1}^\infty$ defined by
\[
f_k=\frac{1}{s_{n_k}} \Ind_{A_k}, \quad A_k=(n_{k-1},n_k]\cap \ZZ,
\]
is not weakly null. On the other hand, since $(f_k)_{k=1}^\infty$ is bounded in $d_1(\ww)$, it is also bounded in $\BB[\ww,\Sym]$.
\end{proof}

\begin{proposition}\label{prop:NotReflexive}
Let $\ww$ be a non-increasing weight and $\Sym$ be a symmetric Banach space. Then $\BB_0[\ww,\Sym]$ is not reflexive.
\end{proposition}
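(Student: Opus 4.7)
Plan: I would apply James's characterization: a Banach space with a Schauder basis is reflexive if and only if the basis is both shrinking and boundedly complete. Since the norm $\Nnorm{\cdot}_{\ww,\Sym}$ is bimonotone (noted at the opening of Section~\ref{Sect4}), the unit vector system $\EB$ is a monotone Schauder basis of $\BB_0[\ww,\Sym]$, so it suffices to exhibit a failure of one of the two properties. The plan is to split on whether $\ww$ is summable: in one regime $\EB$ will fail to be shrinking, in the other it will fail to be boundedly complete.

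When $\ww$ is non-summable, $\prim[\ww]$ is unbounded, and I would essentially recycle the non-shrinking argument from the proof of Proposition~\ref{prop:FBC}. First, I would extend the family of functionals $T_{\ww,[1,N],\emptyset}$ to a single functional $\phi_\ww\in \BB_0^*$ of norm at most one, defined on $f\in\BB_0$ by $\phi_\ww(f)=\lim_N T_{\ww,[1,N],\emptyset}(f)$; the limit exists because, given a finitely supported $g$ close to $f$, the tail differences $T_{\ww,(M,N],\emptyset}(f)=T_{\ww,(M,N],\emptyset}(f-g)$ are controlled by $\norm{f-g}_{b_{\ww}}\le \Nnorm{f-g}_{\ww,\Sym}$. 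Then, recursively choosing $n_0=0<n_1<\cdots$ with $s_{n_k}-s_{n_{k-1}}\ge s_{n_k}/2$ and setting $f_k=s_{n_k}^{-1}\Ind_{A_k}$ with $A_k=(n_{k-1},n_k]\cap\ZZ$, Lemma~\ref{lem:embedding} gives the continuous embedding $d_1(\ww)\hookrightarrow \BB[\ww,\Sym]$, so $\sup_k \Nnorm{f_k}_{\ww,\Sym}<\infty$, while $\phi_\ww(f_k)=(s_{n_k}-s_{n_{k-1}})/s_{n_k}\ge 1/2$. Hence the block basic sequence $(f_k)$ is bounded but not weakly null, so $\EB$ is not shrinking.

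When $\ww$ is summable, $\prim[\ww]$ is bounded, hence so is $\FuFu[\Sym]\approx\prim[\ww]$, and Proposition~\ref{prop:dualsymmetric}\ref{it:sym:c} forces $\Sym=\ell_\infty$ up to an equivalent norm. On $c_{00}$ one has $\norm{\cdot}_{b_{\ww}}\le \norm{\ww}_1\norm{\cdot}_\infty$, so $\Nnorm{\cdot}_{\ww,\Sym}$ is equivalent to the sup norm there; consequently $\BB_0[\ww,\Sym]$, being the closure of $c_{00}$ under this norm, is isomorphic to $c_0$ and hence not reflexive. Equivalently, the partial sums $g_N=\sum_{n\le N}\ee_n$ are bounded in $\BB[\ww,\Sym]$ but fail to converge in $\BB_0[\ww,\Sym]$, witnessing the failure of bounded completeness directly.

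The new content beyond Proposition~\ref{prop:FBC} is modest: the non-summable case essentially copies the block argument already in place (and no longer requires the proper containment $d_1(\ww)\subsetneq\Sym$), while the summable case is a degeneracy forced by Proposition~\ref{prop:dualsymmetric}\ref{it:sym:c}. The only mildly technical point is the promotion of the $T_{\ww,[1,N],\emptyset}$ to a single functional on $\BB_0[\ww,\Sym]$, which is a routine Cauchy argument resting on $\BB_0[\ww,\Sym]$ being the closure of $c_{00}$.
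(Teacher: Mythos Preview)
Your proof is correct, but it takes a genuinely different route from the paper's. The paper splits on whether $d_1(\ww)\subsetneq\Sym[\ww]$ or $d_1(\ww)=\Sym[\ww]$: in the first case it quotes Proposition~\ref{prop:FBC}\ref{it:FBC:dos} (failure of bounded completeness) and applies James; in the second case it identifies $\BB_0[\ww,\Sym]$ with $d_1(\ww)$, invokes the duality $(d_1(\ww))^*=m(\ww)=\ell_1[\ww]$ from Proposition~\ref{lem:ML}\ref{it:ML:6}, and then uses Lemma~\ref{lem:embedding}\ref{it:embedding:A}--\ref{it:embedding:B} on $\ell_1[\ww]$ to conclude. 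Your decomposition (summable versus non-summable) is simpler: you observe that the non-shrinking argument in the proof of Proposition~\ref{prop:FBC} uses only non-summability of $\ww$, not the proper containment $d_1(\ww)\subsetneq\Sym$, so it covers the entire non-summable range in one stroke; the summable case then degenerates to $c_0$ via Proposition~\ref{prop:dualsymmetric}\ref{it:sym:c}. This avoids the dual-space detour through $m(\ww)$ entirely and keeps everything on the primal side, at the mild cost of reproving (rather than citing) the relevant piece of Proposition~\ref{prop:FBC}.

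Two small points of housekeeping. First, Lemma~\ref{lem:embedding} by itself only gives $d_1(\ww)\subset\Sym[\ww]$; to get $d_1(\ww)\hookrightarrow\BB[\ww,\Sym]$ you also need the (trivial) estimate $\norm{\cdot}_{b_\ww}\le\norm{\cdot}_{1,\ww}$, which the paper records in the proof of Theorem~\ref{thm:main}. Second, your construction of $\phi_\ww$ as $\lim_N T_{\ww,[1,N],\emptyset}$ is exactly the functional the paper denotes simply by ``$\ww\in\BB_0^*[\ww,\Sym]$'' in the proof of Proposition~\ref{prop:FBC}; your Cauchy argument is a welcome justification of that assertion.
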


\begin{proof}
If $d_1(\ww) \subsetneq \Sym[\ww]$, the result follows from Proposition~\ref{prop:FBC}\ref{it:FBC:dos} and a classical result from \cite{James1950} (see \cite{AlbiacKalton2016}*{Theorem 3.2.19}). If $\Sym[\ww] = d_1(\ww)$, then $\BB_0[\ww,\Sym]=d_1(\ww)$. By Proposition~\ref{lem:ML}\ref{it:ML:6}, $\BB_0^*[\ww,\Sym]=\ell_1[\ww]$. Hence, the result follows from parts~\ref{it:embedding:A} and \ref{it:embedding:B} of Lemma~\ref{lem:embedding}.
\end{proof}

Recall that a basic sequence in a Banach space is said to be \emph{complemented} if its closed linear span is complemented.
\begin{proposition}
Let $\ww=(w_n)_{n=1}^\infty$ be a null weight and $\Sym$ be a symmetric Banach space with $\FuFu[\Sym]=(\Lambda_n)_{n=1}^\infty\approx\prim[\ww]$. Then the unit vector system of $\BB[\ww,\Sym]$ has a complemented subbasis equivalent to the unit vector system of $\Sym$.
\end{proposition}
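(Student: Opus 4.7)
The plan is to choose a sparse subsequence $(n_k)_{k=1}^\infty$ of $\NN$ along which $\ww$ is absolutely summable, and then show that $(\ee_{n_k})_{k=1}^\infty$ is equivalent in $\BB[\ww,\Sym]$ to the unit vector system of $\Sym$, with the natural coordinate projection providing complementation. Concretely, since $\ww=(w_n)_{n=1}^\infty$ is null, I would pick $(n_k)_{k=1}^\infty$ strictly increasing with $M:=\sum_{k=1}^\infty w_{n_k}<\infty$, put $N=\{n_k\colon k\in\NN\}$, and take $P=S_N$ as the candidate projection.

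For the equivalence of basic sequences, fix $f=\sum_k a_k\,\ee_{n_k}$ supported on $N$. By the rearrangement invariance of $\norm{\cdot}_\Sym$, we have $\norm{f}_\Sym=\norm{\sum_k a_k\,\ee_k}_\Sym$, so it suffices to show $\Nnorm{f}_{\ww,\Sym}\approx \norm{f}_\Sym$. Since $\norm{\cdot}_\Sym$ is a function norm, monotonicity gives $\Lambda_1\,\norm{f}_\infty\le\norm{f}_\Sym$. For any $(B,I)\in\Bt$, the key estimate is
\[
\abs{T_{\ww,I,B}(f)}\le \sum_{k\colon n_k\in I\setminus B}w_{n_k}\abs{a_k}\le M\,\norm{f}_\infty\le\frac{M}{\Lambda_1}\norm{f}_\Sym,
\]
so $\norm{f}_{b_{\ww}}\le (M/\Lambda_1)\norm{f}_\Sym$, whence $\Nnorm{f}_{\ww,\Sym}\le \max\{1,M/\Lambda_1\}\norm{f}_\Sym$; the inequality $\norm{f}_\Sym\le \Nnorm{f}_{\ww,\Sym}$ is immediate from the definition.

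For the boundedness of $P$ on all of $\BB[\ww,\Sym]$: since $|S_N(g)|\le |g|$ pointwise and $\norm{\cdot}_\Sym$ is a function norm, $\norm{P(g)}_\Sym\le\norm{g}_\Sym\le\Nnorm{g}_{\ww,\Sym}$. An identical computation to the one above, now applied to the coefficients $(\ee_{n_k}^*(g))_k$ and using $\norm{P(g)}_\infty\le\norm{g}_\infty\le \Lambda_1^{-1}\Nnorm{g}_{\ww,\Sym}$, yields $\norm{P(g)}_{b_{\ww}}\le (M/\Lambda_1)\Nnorm{g}_{\ww,\Sym}$. Thus $P$ is a bounded projection onto the closed linear span of $(\ee_{n_k})_{k=1}^\infty$ in $\BB[\ww,\Sym]$, and this span is equivalent to the unit vector system of $\Sym$ by the first step.

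There is essentially no major obstacle once the subsequence is chosen correctly: the sparsity condition $\sum_k w_{n_k}<\infty$ is what absorbs the $b_\ww$-contribution into the supremum norm, which is in turn controlled by $\norm{\cdot}_\Sym$ via the function-norm structure. The only point that needs mild attention is that $(\ee_{n_k})_{k=1}^\infty$ should be interpreted as a basic sequence in $\BB[\ww,\Sym]$ (which it automatically is, being a subsequence of the coordinate system of a bimonotone norm), and that in the borderline case where $(\ee_n)_{n=1}^\infty$ fails to be a basis of all of $\Sym$, the statement still makes sense as an equivalence of basic sequences.
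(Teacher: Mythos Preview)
Your proposal is correct and follows essentially the same approach as the paper's own proof: both pick a sparse increasing subsequence $(n_k)_k$ with $\sum_k w_{n_k}<\infty$ (the paper additionally normalizes so that this sum is at most $\Lambda_1$, which makes the constants slightly cleaner), and then use this summability to absorb the $b_\ww$-contribution into $\norm{\cdot}_\infty\le\Lambda_1^{-1}\norm{\cdot}_\Sym$, yielding both the equivalence $(\ee_{n_k})_k\sim(\ee_k)_k$ in $\Sym$ and the boundedness of the coordinate projection $S_N$.
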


\begin{proof}
Let $\nu\colon\NN\to\NN$ be an increasing map such that
\[
C:=\sum_{j=1}^\infty w_{\nu(j)}\le \Lambda_1.
\]
Let $L\colon \FF^\NN \to \FF^\NN$ be the canonical lifting given by
\[
(b_k)_{k=1}^\infty\mapsto (a_n)_{n=1}^\infty, \quad a_n=\begin{cases} b_k & \mbox{ if } n=\nu(k) \mbox{ for some } k\in\NN,\\ 0 & \mbox{ otherwise.}\end{cases}
\]

On the one hand, given $f\in\FF^\NN$ we have
\[
\Nnorm{L(f)}_{\ww,\Sym}
\le \max\enbrace{\norm{L(f)}_{\Sym} , C \norm{f}_\infty}
= \max\enbrace{\norm{f}_{\Sym} , C \norm{f}_\infty}
\le \norm{f}_{\Sym}.
\]

On the other hand, the corresponding canonical projection $S\colon\FF^\NN\to \FF^\NN$ given by
\[
(a_n)_{n=1}^\infty\mapsto (a_{\nu(k)})_{k=1}^\infty,
\]
satisfies
\[
\norm{S(f)}_{\Sym} \le \norm{f}_{\Sym} \le \Nnorm{f}_{\ww,\Sym}, \quad f\in\FF^\NN.
\qedhere
\]
\end{proof}

\section{Open questions}\label{OQ}\noindent
From a functional analytic point of view, one of the main topics in greedy approximation theory with respect to bases is to determine the greedy-like basis structure of a given Banach space. In our case, once we know that there exist conditional bases with Property~(A) it is natural to wonder whether these bases can be found in a given Banach space or in a certain class of Banach spaces. Bearing in mind Theorem~\ref{prop:NotReflexive}, the following question arises.

\begin{question}\label{question:Reflexive}
Is there a reflexive Banach space with a conditional Schauder basis that has Property~(A)?
\end{question}

To address Question~\ref{question:Reflexive} it may be interesting to focus on specific spaces or more restrictive classes of spaces. Understanding the structure of conditional bases with Property~(A) of Hilbert spaces should be a primary task.
\begin{question}\label{question:Hilbert}
Is there a conditional Schauder basis with Property~(A) for a renorming of $\ell_2$?
\end{question}

We point out that allowing renormings is essential for Question~\ref{question:Hilbert} to make proper sense. In fact, if a basis $\XB=(\xx_n)_{n=1}^\infty$ of $\ell_2$ has Property~(A) then
\begin{equation}\label{eq:PLaw}
\norm{\xx_n+\xx_k}_2=\norm{\xx_n+\varepsilon\, \xx_k}_2, \quad n, k\in\NN, \, n\not=k, \, \varepsilon \in\EE.
\end{equation}
By the polarization identity, \eqref{eq:PLaw} is equivalent to $\XB$ being orthogonal.

\begin{bibdiv}
\begin{biblist}

\bib{AlbiacAnsorena2016c}{article}{
author={Albiac, Fernando},
author={Ansorena, Jos{\'e}~L.},
title={Characterization of 1-quasi-greedy bases},
date={2016},
ISSN={0021-9045},
journal={J. Approx. Theory},
volume={201},
pages={7\ndash 12},
url={https://doi.org/10.1016/j.jat.2015.08.006},
review={\MR{3424006}},
}

\bib{AlbiacAnsorena2016}{article}{
author={Albiac, Fernando},
author={Ansorena, Jos\'{e}~L.},
title={Lorentz spaces and embeddings induced by almost greedy bases in {B}anach spaces},
date={2016},
ISSN={0176-4276},
journal={Constr. Approx.},
volume={43},
number={2},
pages={197\ndash 215},
url={https://doi-org/10.1007/s00365-015-9293-3},
review={\MR{3472645}},
}

\bib{AlbiacAnsorena2017b}{article}{
author={Albiac, Fernando},
author={Ansorena, Jos\'{e}~L.},
title={Characterization of 1-almost greedy bases},
date={2017},
ISSN={1139-1138},
journal={Rev. Mat. Complut.},
volume={30},
number={1},
pages={13\ndash 24},
url={https://doi-org/10.1007/s13163-016-0204-3},
review={\MR{3596024}},
}

\bib{AAB2021}{article}{
author={Albiac, Fernando},
author={Ansorena, Jos\'{e}~L.},
author={Bern\'{a}, Pablo~M.},
title={New parameters and {L}ebesgue-type estimates in greedy approximation},
date={2022},
ISSN={2050-5094},
journal={Forum Math. Sigma},
volume={10},
pages={Paper No. e113, 39},
url={https://doi.org/10.1017/fms.2022.102},
review={\MR{4522699}},
}

\bib{AABW2021}{article}{
author={Albiac, Fernando},
author={Ansorena, Jos\'{e}~L.},
author={Bern\'{a}, Pablo~M.},
author={Wojtaszczyk, Przemys{\l}aw},
title={Greedy approximation for biorthogonal systems in quasi-{B}anach spaces},
date={2021},
journal={Dissertationes Math. (Rozprawy Mat.)},
volume={560},
pages={1\ndash 88},
}

\bib{AABHO2024}{article}{
author={Albiac, Fernando},
author={Ansorena, Jos\'e~L.},
author={Blasco, \'Oscar},
author={Chu, H\`ung~V.},
author={Oikhberg, Timur},
title={Counterexamples in isometric theory of symmetric and greedy bases},
date={2024},
ISSN={0021-9045,1096-0430},
journal={J. Approx. Theory},
volume={297},
pages={Paper No. 105970, 20},
url={https://doi.org/10.1016/j.jat.2023.105970},
review={\MR{4650744}},
}

\bib{AADK2016}{article}{
author={Albiac, Fernando},
author={Ansorena, Jos\'{e}~L.},
author={Dilworth, Stephen~J.},
author={Kutzarova, Denka},
title={Banach spaces with a unique greedy basis},
date={2016},
ISSN={0021-9045},
journal={J. Approx. Theory},
volume={210},
pages={80\ndash 102},
url={http://dx.doi.org/10.1016/j.jat.2016.06.005},
review={\MR{3532713}},
}

\bib{AAT2024}{article}{
author={Albiac, Fernando},
author={Ansorena, Jose~L.},
author={Temlyakov, Vladimir},
title={Twenty-five years of greedy bases},
date={2024},
journal={arXiv e-prints},
eprint={2405.20939},
url={https://arxiv.org/abs/2405.20939},
}

\bib{AAW2018b}{article}{
author={Albiac, Fernando},
author={Ansorena, Jos\'{e}~L.},
author={Wallis, Ben},
title={1-greedy renormings of {G}arling sequence spaces},
date={2018},
ISSN={0021-9045},
journal={J. Approx. Theory},
volume={230},
pages={13\ndash 23},
url={https://doi-org/10.1016/j.jat.2018.03.002},
review={\MR{3800094}},
}

\bib{AAW2019}{article}{
author={Albiac, Fernando},
author={Ansorena, Jos\'{e}~L.},
author={Wojtaszczyk, Przemys{\l}aw},
title={Conditional quasi-greedy bases in non-superreflexive {B}anach spaces},
date={2019},
ISSN={0176-4276},
journal={Constr. Approx.},
volume={49},
number={1},
pages={103\ndash 122},
url={https://doi-org/10.1007/s00365-017-9399-x},
review={\MR{3895765}},
}

\bib{AlbiacKalton2016}{book}{
author={Albiac, Fernando},
author={Kalton, Nigel~J.},
title={Topics in {B}anach space theory},
edition={Second Edition},
series={Graduate Texts in Mathematics},
publisher={Springer, [Cham]},
date={2016},
volume={233},
ISBN={978-3-319-31555-3; 978-3-319-31557-7},
url={https://doi.org/10.1007/978-3-319-31557-7},
note={With a foreword by Gilles Godefroy},
review={\MR{3526021}},
}

\bib{AW2006}{article}{
author={Albiac, Fernando},
author={Wojtaszczyk, Przemys{\l}aw},
title={Characterization of 1-greedy bases},
date={2006},
ISSN={0021-9045},
journal={J. Approx. Theory},
volume={138},
number={1},
pages={65\ndash 86},
url={https://doi.org/10.1016/j.jat.2005.09.017},
review={\MR{2197603}},
}

\bib{ABW2023}{article}{
author={Ansorena, Jos\'{e}~L.},
author={Bello, Glenier},
author={Wojtaszczyk, Przemys{\l}aw},
title={Lorentz spaces and embeddings induced by almost greedy bases in superreflexive {B}anach spaces},
date={2023},
ISSN={0021-2172,1565-8511},
journal={Israel J. Math.},
volume={255},
number={2},
pages={621\ndash 644},
url={https://doi.org/10.1007/s11856-022-2449-5},
review={\MR{4619549}},
}

\bib{BennettSharpley1988}{book}{
author={Bennett, Colin},
author={Sharpley, Robert},
title={Interpolation of operators},
series={Pure and Applied Mathematics},
publisher={Academic Press, Inc., Boston, MA},
date={1988},
volume={129},
ISBN={0-12-088730-4},
review={\MR{928802}},
}

\bib{DKK2003}{article}{
author={Dilworth, Stephen~J.},
author={Kalton, Nigel~J.},
author={Kutzarova, Denka},
title={On the existence of almost greedy bases in {B}anach spaces},
date={2003},
ISSN={0039-3223},
journal={Studia Math.},
volume={159},
number={1},
pages={67\ndash 101},
url={https://doi.org/10.4064/sm159-1-4},
note={Dedicated to Professor Aleksander Pe{\l}czy\'nski on the occasion
of his 70th birthday},
review={\MR{2030904}},
}

\bib{DOSZ2011}{article}{
author={Dilworth, Stephen~J.},
author={Odell, Edward~W.},
author={Schlumprecht, Thomas},
author={Zs\'{a}k, Andr\'{a}s},
title={Renormings and symmetry properties of 1-greedy bases},
date={2011},
ISSN={0021-9045},
journal={J. Approx. Theory},
volume={163},
number={9},
pages={1049\ndash 1075},
url={https://doi.org/10.1016/j.jat.2011.02.013},
review={\MR{2832742}},
}

\bib{GW2014}{article}{
author={Garrig\'os, Gustavo},
author={Wojtaszczyk, Przemys{\l}aw},
title={Conditional quasi-greedy bases in {H}ilbert and {B}anach spaces},
date={2014},
journal={Indiana Univ. Math. J.},
volume={63},
number={4},
pages={1017\ndash 1036},
}

\bib{James1950}{article}{
author={James, Robert~C.},
title={Bases and reflexivity of {B}anach spaces},
date={1950},
ISSN={0003-486X},
journal={Ann. of Math. (2)},
volume={52},
pages={518\ndash 527},
url={https://doi-org/10.2307/1969430},
review={\MR{39915}},
}

\bib{KoTe1999}{article}{
author={Konyagin, Sergei~V.},
author={Temlyakov, Vladimir~N.},
title={A remark on greedy approximation in {B}anach spaces},
date={1999},
ISSN={1310-6236},
journal={East J. Approx.},
volume={5},
number={3},
pages={365\ndash 379},
review={\MR{1716087}},
}

\bib{LinTza1977}{book}{
author={Lindenstrauss, Joram},
author={Tzafriri, Lior},
title={Classical {B}anach spaces. {I} -- sequence spaces},
series={Ergebnisse der Mathematik und ihrer Grenzgebiete [Results in
Mathematics and Related Areas]},
publisher={Springer-Verlag, Berlin-New York},
date={1977},
ISBN={3-540-08072-4},
review={\MR{0500056}},
}

\bib{Wojt2003}{incollection}{
author={Wojtaszczyk, P.},
title={Greedy type bases in {B}anach spaces},
date={2003},
booktitle={Constructive theory of functions},
publisher={DARBA, Sofia},
pages={136\ndash 155},
review={\MR{2092334}},
}

\end{biblist}
\end{bibdiv}
\end{document}